\theoremstyle{plain}
\newtheorem{theorem}{\sc Theorem}[section]
\newtheorem{proposition}[theorem]{\sc Proposition}
\newtheorem{lemma}[theorem]{\sc Lemma}
\newtheorem{corollary}[theorem]{\sc Corollary}
\newtheorem*{theorem*}{Theorem}
\theoremstyle{definition}
\newtheorem{definition}[theorem]{\sc Definition}
\newtheorem{example}[theorem]{\sc Example}
\theoremstyle{remark}
\newtheorem{remark}[theorem]{\sc Remark}
\newenvironment{invisible}[1][\unskip]{
	\noindent
	\color{red}
	[{\textbf{\color{blue}TBH}: \textit{#1}}
}{]}
\newcommand{\tensor}[1]{\otimes_{\scriptscriptstyle{#1}}}
\newcommand{\fk}[1]{\mathfrak{#1}}
\renewcommand{\hom}[3]{\mathrm{Hom}_{\Sscript{#1}}\left(#2,\,#3\right)}
\newcommand{\bd}[1]{\boldsymbol{#1}}
\newcommand{\bara}[1]{\overline{#1}}
\newcommand{\End}[2]{\mathrm{End}_{\Sscript{#1}}(#2)}
 \newcommand{\id}{\mathrm{Id}}
\newcommand{\injlimit}[2]{\varinjlim_{#1}\left({#2}\right)}
\newcommand{\ann}[1]{\mathsf{Ann}\left({#1}\right)}
\renewcommand{\ker}[1]{\mathrm{ker}\left({#1}\right)}
\newcommand{\what}[1]{\widehat{#1}}
\newcommand{\Lin}[1]{\cL{in}_{#1}}
\newcommand{\dlin}[1]{\cD{lin}_{#1}}
\definecolor{bostonuniversityred}{rgb}{0.8, 0.0, 0.0}
\newcommand{\ie}{i.e.~}
\newcommand{\Hom}[6]{{_{#1}^{#2}\mathsf{Hom}_{#3}^{#4}}\left({#5},{#6}\right)}
\newcommand{\algk}{\mathsf{Alg}}
\newcommand{\dalgk}{\mathsf{DiffAlg}}
\newcommand{\saft}[1]{{#1}^\bullet}
\newcommand{\cl}[1]{\bar{{#1}}}
\newcommand{\Circ}[1]{{#1}^{\circ}}
\newcommand{\Der}[2]{\mathrm{Der}_{\Sscript{#1}}(#2)}
\newenvironment{Array}[2][1]
  {\begin{array}{#2}}
  {\end{array}}
\newcommand{\Uv}{\Circ{U}_{\Sscript{(M^*)}}}
\newcommand{\N}{\mathbb{N}}
\newcommand{\K}{\Bbbk}
\newcommand{\C}{\mathbb{C}}
\newcommand{\R}{\mathbb{R}}
\newcommand{\KK}{\mathbb{K}}
\newcommand{\FF}{\mathbb{F}}
\newcommand{\ZZ}{\mathbb{Z}}
\newcommand{\QQ}{\mathbb{Q}}
\newcommand{\sS}{\mathscr{S}}
\newcommand{\cA}{{\mathcal A}}
\newcommand{\cD}{{\mathcal D}}
\newcommand{\cF}{{\mathcal F}}
\newcommand{\cH}{{\mathcal H}}
\newcommand{\cL}{{\mathcal L}}
\newcommand{\cN}{{\mathcal N}}
\newcommand{\cP}{{\mathcal P}}
\newcommand{\cU}{{\mathcal U}}
\newcommand{\Sscript}[1]{{#1}}
\def\namedlabel#1#2{\begingroup
    #2%
    \def\@currentlabel{#2}%
    \phantomsection\label{#1}\endgroup
}
\title[The Hopf algebroid structure of differentially recursive sequences]{The Hopf algebroid structure of differentially recursive sequences}
\author{Laiachi El Kaoutit}
\address{Universidad de Granada. Departamento de \'{A}lgebra and IEMath. Facultad de Ciencias. Fuente Nueva s/n. E18071 Granada, Spain}
\email{kaoutit@ugr.es}
\urladdr{http://www.ugr.es/~kaoutit/}
\author{Paolo Saracco}
\address{D\'epartement de Math\'ematique, Universit\'e Libre de Bruxelles, Boulevard du Triomphe, B-1050 Brussels, Belgium.}
\email{paolo.saracco@ulb.ac.be}
\urladdr{sites.google.com/view/paolo-saracco}
\date{\today}
\subjclass[2010]{Primary  12H05, 16S32, 16T05, 34M15; Secondary 05A19, 03D20, 34G10; 41A58} 
\keywords{Differential fields; Linear differential matrix equations; Recursive sequences; Series expansions; Rings of differential operators;  Commutative and Co-commutative Hopf algebroids; Hurwitz series; Taylor map.  }
\thanks{This paper was written while P. Saracco was member of the ``National Group for Algebraic and Geometric Structures and their Applications'' (GNSAGA-INdAM). He acknowledges FNRS support through a collaborateur scientifique position (project ``(CO)REPRESENTATIONS'', application number 34777346). Research supported by the Spanish Ministerio de Econom\'{\i}a y Competitividad  and the European Union FEDER, grant MTM2016-77033-P}
\begin{document}

\begin{abstract}
A differentially recursive sequence over a differential field is a sequence of elements satisfying a homogeneous differential equation with non-constant coefficients (namely, Taylor expansions of elements of the field) in the differential algebra of Hurwitz series.
The main aim of this paper is to explore the space of all differentially recursive sequences over a given field with a non-zero differential. We show that these sequences form a two-sided vector space that admits, in a canonical way, a structure of Hopf algebroid over the subfield of constant elements. We prove that it is the direct limit, as a left comodule, of all spaces of formal solutions of linear differential equations and that it satisfies, as Hopf algebroid, an additional universal property. When the differential on the base field is zero, we recover the Hopf algebra structure of linearly recursive sequences.  
\end{abstract}

\maketitle

\tableofcontents

\pagestyle{headings}

\section{Introduction}

This section represents a self-contained introduction to this note. After giving a little background on linear differential equations over a differential field and on the study of their solutions from the point of view of Hurwitz series, we clarify our motivations in reconsidering this subject. The main results of the paper are reported herein as well, in great detail. The last part is devoted to introduce the essential notions and notations that will be employed all over the text. 

\subsection{Motivation and overview}
Let $(\KK,\partial)$ be a differential field with subfield of constants 
$$
\Bbbk:=\KK^{\Sscript{\partial}}=\big\{ c \in \KK|\,\, \partial(c)=0 \big\}.
$$ 
Assume that we are given a homogeneous linear scalar differential equation of order $n$, with not necessarily constant coefficients, of the form
\begin{equation}\label{eq:diff}
\partial^d\left(y\right) - \left(c_{d-1}\partial^{d-1}\left(y\right) + \cdots + c_{1}\partial\left(y\right) + c_0y\right) = 0
\end{equation}
where $c_i\in\KK$ for $i=1,\ldots,d$. In some very particular circumstances, $\KK$ already contains a full set of solutions (\ie $d$ linearly independent solutions) of \eqref{eq:diff}, such as it happens for the equation $\partial\left(y\right)-y/z = 0$ over $\C(z)$, but in general this is not the case (for instance the equation $\partial\left(y\right)-cy/z=0$ over $\C(z)$ admits solutions in $\C(z)$ if and only if $c$ is an integer). One then looks for differential field (or even differential ring) extensions of $\KK$ containing the missing solutions. This is, in fact, one of the objectives of what is known in the literature as \emph{differential Galois theory} \cite{PutSinger}. 

In a series of papers \cite{Keigher-huw,Keigher-Pritch,Keigher-diff}, Keigher and collaborators studied Hurwitz series as a practical way of formally integrating homogeneous linear differential equations over fields (or, more generally, rings, possibly with zero differential). Namely, one introduces the differential algebra of Hurwitz series $(\cH(\KK),\cN)$ over $\KK$ as $\cH(\KK) = \KK^\N$ with the Hurwitz product 
\begin{equation}\label{Eq:Hurwitz}
\alpha \cdot \beta \,\,=\,\, \Big(\sum_{0\leq k \leq n}\binom{n}{k} \alpha_k\beta_{n-k}\Big)_{n \, \in \, \mathbb{N}}, \quad \forall\, \alpha, \beta\, \in \,  \cH(\KK),
\end{equation}
and the differential map is the shift operator given by 
\begin{equation}\label{eq:N}
\cN:\cH(\KK)\longrightarrow  \cH(\KK),\quad \Big(  \left(\alpha_0,\alpha_1,\alpha_2,\ldots\right)\longmapsto \left(\alpha_{1},\alpha_{2},\alpha_{3},\ldots\right) \Big).
\end{equation} 
In this way, one may look at \eqref{eq:diff} as a differential equation over $\cH(\KK)$ of the form:
\begin{equation}\label{eq:diffnaive}
\cN^{d} - \left(s\left(c_{d-1}\right)\cN^{d-1} + \cdots + s\left(c_{1}\right)\cN + s\left(c_0\right)\right) = 0,
\end{equation}
where $s:\KK\to\cH(\KK),\, x\mapsto \left(x,0,0,\ldots\right)$, is called \emph{the source map}. Since $\cN(\alpha)=0$ if and only if  $\alpha=s(c)$ for some $c\in \KK$, $\KK$ can be identified with the subalgebra of constant elements of the differential $\Bbbk$-algebra $(\cH(\KK), \cN)$ via $s$.
As a consequence, equation \eqref{eq:diffnaive} is now a differential equation with constant coefficients and one can solve it to find the so called \emph{formal solutions} of \eqref{eq:diff} in $\cH(\KK)$, that is, sequences $\alpha\in\KK^\N$ such that
\begin{equation}\label{eq:Bistro}
\alpha_{n+d} = c_1\alpha_{n+d-1} + \cdots + c_d\alpha_{n}, \quad \text{ for all  } \, n\geq 0.
\end{equation}
It is noteworthy that $\alpha\in\cH(\KK)$ is a formal solution of \eqref{eq:diff} in this sense if and only if it is a \emph{linearly recursive sequence} over $\KK$, that is, it satisfies a recurrence like \eqref{eq:Bistro}. For the sake of comparison with what follows, let us highlight that in order to write \eqref{eq:Bistro} at level $n+d$, one needs to place the coefficients $(c_{1}, \cdots, c_{d})$ along the vector $(\alpha_{n+d}, \cdots, \alpha_{n+1})$ in a linear way, without deriving the $c_{i}$'s.

Linearly recursive sequences arise widely in mathematics and have been studied extensively and from different points of view. See for example \cite{FutiaMullerTaft,petersontaft,taft} concerning their connections with Hopf algebras and the Sweedler dual $\Circ{\KK[X]}$ of the coordinate algebra of the affine additive group $\left(\KK,+,0\right)$ and \cite{LaiachiPaolo} concerning their topological structure and properties. For a  survey on the topic, we refer to \cite{poorten}. 

Despite the strong motivations supporting Keigher's approach, that is to say, the fact that the ``universal'' space of formal solutions admits the structure of a Hopf algebra,  we believe that the argument reported above has a disadvantage: the inclusion $s:\KK\to\cH(\KK)$ does not make of $\cH(\KK)$ a differential extension of $\KK$, as it does not commute with the differentials. Therefore, we could not see how to relate Keigher's formal solutions to solutions of the original equation \eqref{eq:diff} over $\KK$. In particular it is not clear, at least to us, how to relate the Hopf algebra of formal solutions with the affine algebraic $\Bbbk$-group attached to the initial equation \eqref{eq:diff}, neither how to construct the Picard-Vessiot extension of this equation out of this Hopf algebra.

In the present paper, our aim is to overcome the aforementioned obstacle and to offer to the reader a genuine differential extension of $\KK$ containing all (formal) solutions to homogeneous linear differential equations over $\KK$. Namely, our approach in studying \eqref{eq:diff} by means of the differential algebra $(\cH(\KK), \cN)$ will take into account the injective ``Taylor map'': $t:\KK\to\cH(\KK),\, x\mapsto \left(x,\partial(x),\partial^2(x),\ldots\right)$, referred to as the \emph{the target map}. In this way, we are able to show that $\cH(\KK)$ contains a distinguished two-sided vector space, the one of all \emph{differentially recursive sequences} hereby introduced, that naturally carries a commutative Hopf algebroid structure (as linearly recursive sequences were carrying a Hopf algebra structure). We will show how it can be realized as the universal object satisfying \emph{two} universal properties, and how, as left comodule, it turns out to be the direct limit of the spaces of formal solutions of linear differential equations (compare with \cite{Keigher-diff}).

\subsection{Description of the main results}
We consider $\cH(\KK)$ as a differential extension of $\KK$ via the target map above  $t:\KK\to\cH(\KK)$. By identifying $\KK$ with its image via $t$, we may now look at \eqref{eq:diff} as an equation over $\cH(\KK)$ of the form
\begin{equation}\label{eq:diffseria}
\cN^{d} - \left(t\left(c_{d-1}\right)\cN^{d-1} + \cdots + t\left(c_{1}\right)\cN + t\left(c_0\right)\right) = 0.
\end{equation}
A sequence $\alpha\in\KK^\N$ is a solution of \eqref{eq:diffseria} if and only if it satisfies a recursive relation of the form
\begin{gather}
\alpha_{n+d}  \,=\,  \sum_{k=0}^{n}\binom{n}{k} \partial^{k}(c_{d-1}) \alpha_{n+d-k-1} + \cdots +   \sum_{k=0}^{n}\binom{n}{k} \partial^{k}(c_{1}) \alpha_{n-k+1} +  \sum_{k=0}^{n}\binom{n}{k} \partial^{k}(c_{0}) \alpha_{n-k} \label{eq:DRS}\\
= c_{d-1}\alpha_{n+d-1} + n\partial(c_{d-1})\alpha_{n+d-2} + \binom{n}{2} \partial^2(c_{d_1})\alpha_{n+d-3} + \cdots + \partial^n(c_{d-1})\alpha_{d-1} + c_{d-2}\alpha_{n+d-2} +  \cdots + \partial^n(c_0)\alpha_{0}, \notag
\end{gather}
for $c_i\in\KK$. For instance, if $d=2$ and equation \eqref{eq:diffseria} has the form $\cN^{2}- t\left(c_{1}\right)\cN -  t\left(c_0\right)=0$, then the attached differential recursive relation can be written as follows:
$$
\alpha_{n+2} \,\, =\,\, \binom{n}{0} \partial^{0}c_{1}\alpha_{n+1} \, +\, \sum_{k=0}^{n-1}\Bigg( \binom{n}{k+1} \partial^{k+1}c_{1} + \binom{n}{k}\partial^{k}c_{0} \Bigg)\,\alpha_{n-k} \,+\, \binom{n}{n} \partial^{n}c_{0}\alpha_{0}, \quad \forall \,  n \geq 0.
$$ 
A sequence satisfying a relation of the form \eqref{eq:DRS} will be referred to as a \emph{differentially recursive sequence}. We will prove that the collection $\dlin{\KK}$ of all differentially recursive sequences over $\KK$ is not only a differential extension of $\KK$ providing all solutions to linear differential equations, but it also enjoys a structure of a commutative Hopf algebroid over $\Bbbk$ (\ie that of an affine groupoid $\Bbbk$-scheme) converting it into a universal construction (in the category theoretical sense) in two ways. On the one hand, it is the universal object $\Circ{\KK[\partial]}$ provided by the Tannaka-Kre\u{\i}n reconstruction procedure applied to the forgetful functor from the category of all differential $\KK$-modules (\ie modules over the \emph{ring of linear differential operators} $\KK[\partial]\coloneqq \KK[Y, \partial]$, constructed as the Ore extension of $\KK$ via the derivation $\partial$, with underlying  finite-dimensional vector space structure) to the category of finite-dimensional right $\KK$-vector spaces, as in \cite{LaiachiGomez}. On the other hand, it is the ``biggest'' $\KK$-coring $\saft{\KK[\partial]}$ inside the right $\KK$-linear dual $\KK[\partial]^* = \Hom{}{}{\KK}{}{\KK[\partial]}{\KK}$  (see subsection \ref{ssec:I3} below for general notations) provided by the ring/coring duality via the Special Adjoint Functor Theorem, as in \cite{ArdiLaiachiPaolo}. The following resumes our main achievements.

\begin{theorem*}
The $\K$-algebra $\dlin{\KK}$ of differentially recursive sequences (i.e., satisfying \eqref{eq:DRS}) enjoys a structure of commutative Hopf algebroid with base $\Bbbk$-algebra $\KK$. The structure maps are explicitly given by the source $s:\KK\to\dlin{\KK},\,x\mapsto (x,0,\ldots)$, the target $t:\KK\to\dlin{\KK},\,x\mapsto \left(\partial^n(x)\right)_{n\geq 0}$, the counit $\varepsilon:\dlin{\KK} \to \KK, \alpha\mapsto \alpha(0)$, the comultiplication
\[
\Delta:\dlin{\KK} \longrightarrow \dlin{\KK}\tensor{\KK}\dlin{\KK}, \qquad \alpha\longmapsto \sum_{i=0}^d \cN^i\left(\alpha\right) \tensor{\KK} \left(y_i^*\left(y_n\right)\right)_{n\geq 0}
\]
\Big(where $\left\{ y_i^* \right\}_{0\leq i \leq d}$ is a suitable dual basis of the space of formal solutions of the differential equation satisfied by $\alpha$ and $y_n$ are the canonical images of the operators $\partial^n$ therein\Big) and the antipode
\[
\sS:\dlin{\KK}\longrightarrow \dlin{\KK}, \qquad \alpha \longmapsto \left(\sum_{k=0}^n\binom{n}{k}(-1)^{n-k}\partial^k\left(\alpha(n-k)\right)\right)_{n\geq 0}.
\]
It turns out that, with this structure, there is a chain of isomorphisms  
$$
\Circ{\KK[\partial]} \, \cong \, \dlin{\KK} \, \cong \, \saft{\KK[\partial]}
$$ of commutative Hopf algebroids (see \cite{ArdiLaiachiPaolo} for the definitions of both $(-)^{\circ}, \saft{(-)}$). Furthermore, if by $M_{\Sscript{\cL}}^*$ we denote the (formal) solution space of the differential equation $\cL(y)=0$, then the family $\left\{ M_{\Sscript{\cL}}^* \right\}_{\Sscript{\cL}}$ forms a directed system of  left $\dlin{\KK}$-comodules such that  we have an isomorphism
\[
\injlimit{\cL}{M_{\Sscript{\cL}}^*} \cong \dlin{\KK}
\]
of left comodules.
\end{theorem*}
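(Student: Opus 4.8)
The plan is to prove the three objects coincide by exhibiting a single explicit pairing and transporting structure along it, rather than checking the Hopf algebroid axioms on $\dlin{\KK}$ by brute force. The starting point is the duality with the ring of differential operators. Since $\KK[\partial]=\KK[Y;\partial]$ is free as a right $\KK$-module on the powers $\{Y^n\}_{n\geq 0}$, a right $\KK$-linear functional is determined by its values on this basis, so assigning to $\phi\in\KK[\partial]^*$ the sequence $\left(\phi(Y^n)\right)_{n\geq 0}$ identifies $\KK[\partial]^*$ with $\cH(\KK)=\KK^\N$ as sets. Under this identification the shift $\cN$ is exactly the transpose of left multiplication by $Y$. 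Moreover $\KK[\partial]$ is the universal enveloping algebra of the rank-one Lie–Rinehart algebra $\KK\partial$, hence a cocommutative left $\KK$-bialgebroid, and its structure dualizes to the maps in the statement: the augmentation $Y\mapsto 0$ produces the source $s$, evaluation at the identity operator produces the counit $\varepsilon(\alpha)=\alpha(0)$, the two $\KK$-module structures on $\KK[\partial]$—which differ through the Ore relation $Yx=xY+\partial(x)$—dualize to the distinct source and target, the latter reproducing the Taylor formula $t(x)=(\partial^n(x))_n$, and the antialgebra map determined on the generator by $\partial\mapsto-\partial$ dualizes to the stated antipode $\sS$.

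The heart of the argument is to show that $\dlin{\KK}$ coincides with $\saft{\KK[\partial]}$ as a subspace of $\KK[\partial]^*\cong\cH(\KK)$. By its definition in \cite{ArdiLaiachiPaolo}, $\saft{\KK[\partial]}$ is the largest subspace on which the transpose of the multiplication of $\KK[\partial]$ corestricts to a genuine comultiplication valued in the uncompleted tensor product $\saft{\KK[\partial]}\tensor{\KK}\saft{\KK[\partial]}$. Exactly as in the classical identification of the Sweedler dual of $\KK[X]$ with linearly recursive sequences, I would prove that $\alpha$ lies in $\saft{\KK[\partial]}$ if and only if the $\KK$-span of its shifts $\{\cN^n(\alpha)\}_{n\geq 0}$ (taken through the target action) is finite-dimensional, equivalently if and only if $\alpha$ is annihilated by a monic operator $\cN^d-\sum_{i<d}t(c_i)\cN^i$ of finite order. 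Expanding this annihilation condition via the Hurwitz product \eqref{Eq:Hurwitz} reproduces precisely the recursion \eqref{eq:DRS}, giving $\dlin{\KK}=\saft{\KK[\partial]}$; simultaneously, choosing a dual basis $\{y_i^*\}$ of the finite-dimensional solution space of that operator reads off the asserted comultiplication formula.

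Next I would invoke the reconstruction formalism of \cite{LaiachiGomez} to identify $\Circ{\KK[\partial]}$ with $\saft{\KK[\partial]}$: finite-dimensional right $\KK$-vector spaces carrying a $\KK[\partial]$-action are exactly the finite-dimensional differential modules, each a quotient $\KK[\partial]/\KK[\partial]\cL$ for a monic operator $\cL$, and the reconstructed Hopf algebroid is assembled from their duals, which is the same commutative Hopf algebroid attached to $\KK[\partial]$ by the ring/coring duality. Transporting the structure maps along the resulting chain $\Circ{\KK[\partial]}\cong\dlin{\KK}\cong\saft{\KK[\partial]}$ confirms the explicit formulas and, in particular, that $\dlin{\KK}$ is a commutative Hopf algebroid with base $\KK$ over $\Bbbk$. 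For the colimit statement I would order monic operators by right divisibility, writing $\cL'\preceq\cL$ when $\cL'$ right-divides $\cL$; since $\KK$ is a field, $\KK[\partial]$ is a left and right Euclidean domain, so least common left multiples exist and the order is directed. When $\cL'\preceq\cL$ every solution of $\cL'$ solves $\cL$, giving injective transition maps $M_{\cL'}^*\hookrightarrow M_{\cL}^*$ of finite-dimensional left $\dlin{\KK}$-comodules (each $M_{\cL}^*$ being a subcomodule of $\dlin{\KK}$ cut out by the finite coaction). As a sequence is differentially recursive precisely when annihilated by some monic $\cL$, every element of $\dlin{\KK}$ lies in some $M_{\cL}^*$, so the system is exhaustive and $\injlimit{\cL}{M_{\Sscript{\cL}}^*}\cong\dlin{\KK}$ as left comodules.

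The \emph{main obstacle} is the second step: establishing that differential recursiveness is exactly the representativity/finiteness condition that cuts out the coring $\saft{\KK[\partial]}$ inside the full dual, and verifying that the resulting comultiplication genuinely lands in the uncompleted tensor product over $\KK$. The delicate point here is bookkeeping the base $\KK$, which is embedded nontrivially via the nonzero differential: the two tensor factors see $\KK$ through the source and the target respectively, so one must carefully track the Hurwitz product and the target action when translating the finiteness of the annihilator into the closed form \eqref{eq:DRS} and into the dual-basis expression for $\Delta$.
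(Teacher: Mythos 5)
Your plan assembles essentially the same ingredients as the paper --- the identification $\Phi:U^*\cong\cH(\KK)$ of Proposition \ref{prop:iso}, the characterization of differentially recursive sequences as functionals vanishing on a cofinite principal right ideal (Proposition \ref{prop:vanish}), transfer of structure along the resulting isomorphisms, and the exhaustion of $\dlin{\KK}$ by the solution spaces $M_\cL^*$ --- but in a different order, and the reordering conceals the one genuinely hard step. The paper first builds explicit mutually inverse maps $\Circ{U}\leftrightarrow\dlin{\KK}$ (Propositions \ref{prop:big1} and \ref{prop:big2}, Theorem \ref{thm:big}), transports the Hopf algebroid structure (Theorem \ref{thm:chalgd}), and only afterwards proves $\dlin{\KK}\cong\saft{U}$ by verifying the universal property \eqref{eq:saft} (Theorem \ref{thm:bullet}). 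You instead start from the description of $\saft{U}$ as ``the largest subspace of $U^*$ on which the transposed multiplication corestricts to the uncompleted tensor product''. That is not the definition of $\saft{U}$: it is produced by the Special Adjoint Functor Theorem and is characterized only by the universal property \eqref{eq:saft}, and the canonical map $\xi:\saft{U}\to U^*$ is not a priori injective, so $\saft{U}$ is not a priori a subspace of $U^*$ at all. Showing that the functionals vanishing on a cofinite principal right ideal form a $\KK$-coring satisfying \eqref{eq:saft}, and that every coring mapping compatibly into $U^*$ factors through it, is exactly the content of Theorem \ref{thm:bullet} --- the longest computation in the paper --- and it cannot be imported from the classical Sweedler-dual argument for $\KK[X]$, because here the two tensor factors carry the $\KK$-action through the source and the target respectively, so well-definedness of $\Delta$ over $\tensor{\KK}$ must be checked by hand. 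You correctly identify this as the main obstacle, but as written it is assumed rather than proved.

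A second, smaller gap is the step $\Circ{U}\cong\saft{U}$ ``by reconstruction'', which is asserted. What is actually needed is that the canonical map $\zeta:\Circ{U}\to U^*$ is injective with image exactly $\dlin{\KK}$: injectivity is quoted from \cite{LaiachiGomez}, and the two inclusions are Propositions \ref{prop:big1} and \ref{prop:big2}. Your justification via ``every finite-dimensional differential module is a quotient of $\KK[\partial]$ by the ideal generated by a monic operator'' is both unnecessary and false in the degenerate case $\partial=0$ (where the theorem still applies and recovers linearly recursive sequences); the paper avoids any cyclic vector argument by factoring each class $\overline{\varphi\tensor{T_M}m}$ through the cyclic submodule generated by $m$ alone. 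Note also the sides: the paper works throughout with right ideals $P(Y)U$ and right modules $U/P(Y)U$, not quotients by left ideals. The colimit argument, by contrast, is sound and matches Proposition \ref{prop:directlimit}; your observation that directedness follows from least common left multiples in the Euclidean domain $U$ makes explicit a point the paper leaves implicit.
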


The relation of differentially recursive sequences with  linear recursive sequences, is given in form of the following commutative diagram of $\K$-algebras, with injective arrows: 
\begin{equation*}
\begin{gathered}
\xymatrix@=15pt{
 & \cH(\KK) & \\
\dlin{\KK} \ar@{^{(}->}[ur] & & \Lin{\KK} \ar@{_{(}->}[ul] \\
 & \Lin{\K} \ar@{_{(}->}[ul] \ar@{^{(}->}[ur] & 
}
\end{gathered}
\end{equation*}
where $\Lin{\FF}$ denotes the $\FF$-vector space of linearly recursive sequences for a given field $\FF$ (with or without differential). We provide concrete examples (see the ones reported in Example \ref{exam:Dlink} below) to show that the images of $\dlin{\KK}$ and $\Lin{\KK}$ inside $\cH(\KK)$ do not coincide. 
We conclude the paper by giving a brief comment on the Picard-Vessiot ring extension of equation \eqref{eq:diff} and its relation with the Hopf algebroid $\dlin{\KK}$.

\subsection{Notation and basic notions}\label{ssec:I3}

For $R,S,T$ three rings, $N$ an $(S,T)$-bimodule, $M$ an $(R,S)$-bimodule and $P$ an $(R,T)$-bimodule, the hom-tensor adjunction states that we have bijective correspondences
\[
\Hom{S}{}{T}{}{N}{ \Hom{R}{}{}{}{M}{P} } \cong \Hom{R}{}{T}{}{M\tensor{S}N}{P} \cong \Hom{R}{}{S}{}{M}{\Hom{}{}{T}{}{N}{P}}.
\]
The $(S,T)$-bimodule structure on $\Hom{R}{}{}{}{M}{P}$ (and, similarly, the $(R,S)$-bimodule one on $\Hom{}{}{T}{}{N}{P}$) is given as follows. For all $s\in S$, $t\in T$, $f\in \Hom{R}{}{}{}{M}{P}$ and $m\in M$,
\[
\left(s\cdot f \cdot t\right)(m) = f\left(m\cdot s\right)\cdot t.
\]
Since every $s\in S$ (and, analogously, every $t\in T$) induces an $R$-linear endomorphism $\rho_s: M\to M, m\mapsto m\cdot s$, we may equivalently write
\[
s\cdot f \cdot t = \rho_t \circ f \circ \rho_s.
\]
Similarly, every $r\in R$ (and every $s\in S$) induces the $T$-linear morphism $\lambda_r : N \to N, n\mapsto r\cdot n$, and hence
\[
r\cdot g \cdot s = \lambda_s \circ g \circ \lambda_r
\]
for all $g\in \Hom{}{}{T}{}{N}{P}$. We will often omit the $\cdot$ symbol in what follows.

By a differential algebra over a commutative ring $R$ we mean an $R$-algebra $A$ together with an $R$-linear endomorphism $\partial_A:A\to A$ satisfying the Leibniz rule 
$$
\partial_A(ab) = \partial_A(a)b+ a \partial_A(b),
$$ 
for all $a,b\in A$. The following relation can be proven inductively
\begin{equation}\label{eq:derprod}
\partial_A^n(ab) = \sum_{k=0}^n \binom{n}{k}\partial_A^k(a)\partial_A^{n-k}(b).
\end{equation}
With $\left(\KK,\partial\right)$ we always denote a differential field with subfield of constants $\K$. The unadorned tensor product $\otimes$ is that over $\K$. If another different field will be required, we denote it by $\FF$.


\section{Differential operators and differentially recursive sequences over differential fields}

This section is devoted to stating and proving the main result of the paper, concerning the Hopf algebroid structure on the space of differentially recursive sequences. Namely, after recalling explicitly the Hopf algebroid structure of the differential operator algebra of a given differential field, we introduce the space of \emph{differentially recursive sequences}, we prove that it inherits a Hopf algebroid structure from that and, finally, we compare these  sequences with the usual linearly recursive ones. 

\subsection{Differential operators over differential fields and Hopf algebroids}
Given $\KK,\partial$ and $\K$ as before,  we consider the skew polynomial algebra (also known as Ore extension) $U_{\KK}\coloneqq \KK[Y;\partial]$ associated with $\KK$. This is the free  $\K$-algebra generated by $\KK$ and an element $Y$ subject to the relations
\[
xY = Yx + \partial (x)
\]
for all $x\in\KK$. Over $\KK$, it coincides with the right $\KK$-vector space generated by the symbols $\{Y^i\mid i\geq 0\}$: $\bigoplus_{i\geq 0}Y^i\KK$. Therefore, a generic element will be written as $\sum_{i=0}^d Y^ic_i$ with $c_i\in\KK$ for $i=0,\ldots,d$.

For the sake of simplicity, we will set $U\coloneqq U_{\KK}$ when the field $\KK$ is clear from the context.

\begin{example}\label{ex:easy}
The field $\C$ of complex numbers (and, in general, any field) with the zero derivation $\partial\equiv 0$ is a differential field with constant field $\C$ itself. It's associated skew polynomial algebra is the algebra of polynomials in one indeterminate $\C[Y]$.
\end{example}

\begin{example}\label{ex:diffop}
For a field $\FF$ of characteristic $0$, the field of rational functions $\FF(X)=\left\{p(X)/q(X)\mid q(X)\neq 0\right\}$ in one indeterminate, with derivation $\partial_X$ uniquely extended from $\partial_X:\FF[X]\to \FF[X]$, is a differential field with constant field $\FF$ itself. Notice that
\[
\partial_X\left(\frac{1}{q(X)}\right) = - \frac{\partial_X(q(X))}{q(X)^2}
\]
for all non-zero $q(X)\in \FF[X]$, in light of the Leibniz rule. In this case, the associated skew polynomial algebra is the algebra of differential operators of $\FF\left(X\right)$ (see \cite[Corollary 15.2.5 and Theorem 15.5.5]{McConnelRobson}).
\end{example}

In general, the skew polynomial algebra $U$ can be considered as an algebra of differential operators of $\KK$ (\ie a subalgebra of the algebra of differential operators of $\KK$).
It is well-known that for $\C$ a differential field as in Example \ref{ex:easy}, the skew polynomial algebra $U=\C[Y]$ is a Hopf algebra and its finite dual Hopf algebra $\Circ{U}$ coincides (up to isomorphism) with the algebra $\Lin{\C}$ of linearly recursive sequences over $\C$ (see \cite[\S 3.5]{Abe} for the explicit definition of the finite dual Hopf algebra). This observation played a fundamental role in \cite{LaiachiPaolo} and allowed us to reveal the rich topological structure of $\Lin{\C}$. The present section is devoted to show that a similar identification holds over more general differential fields.

\begin{remark}\label{rem:CHAlgd}
For any differential field $\left(\KK,\partial\right)$ with field of constants $\K$, the associated Ore extension $U=\KK[Y;\partial]$ is a (right) cocommutative Hopf algebroid over $\KK$. We refer to \cite[\S2.2]{ArdiLaiachiPaolo} for the technical details of the definition of a cocommutative Hopf algebroid we will use in this paper. Here we just recall briefly the structure maps and their properties for the case we are interested in. In details, identify $xY^0$ with $x$ for every $x\in \KK$, so that we may consider the assignment $\tau: \KK\to U,\, x\mapsto x$. This is a morphism of $\K$-algebras which converts $U$ into a $\KK$-ring (not a $\KK$-algebra, as $\tau$ does not land into the center of $U$). As a consequence, $U$ is a two-sided $\KK$-vector space with actions
\begin{equation}\label{eq:useful}
Y^n\cdot x = Y^nx \qquad \text{and} \qquad x\cdot Y^n = \sum_{k=0}^n \binom{n}{k}Y^k\partial^{n-k}(x),
\end{equation}
for all $n\geq 0$, $x\in \KK$. The counit is provided by the assignment $\varepsilon:U\to \KK,\, Y^n\mapsto \delta_{0,n},$ extended by right $\KK$-linearity. The unique right $\KK$-linear morphism $\Delta:U\to U\tensor{\KK}U$ satisfying 
\[
\Delta\left(Y^n\right)= \sum_{k=0}^n \binom{n}{k} Y^k \tensor{\KK} Y^{n-k}
\]
for all $n\geq 0$ (where the tensor product $\tensor{\KK}$ is taken by considering $U$ as a symmetric $\KK$-bimodule with left action induced by the right one) is a well-defined coassociative and counital comultiplication that lands into the so-called Sweedler-Takeuchi $\times$-product (\cite{Sweedler-IHES, Takeuchi:77}) and such that $\Delta:U\to U\times_{\KK}U$ is a $\K$-algebra map. The translation map is provided by
\[
\beta^{-1}(1\tensor{\KK} Y^{n}) = \sum_{k=0}^n \binom{n}{k}(-1)^kY^k \tensor{\KK} Y^{n-k}.
\]
Therefore, the space $U^*\coloneqq \Hom{}{}{\KK}{}{U}{\KK}$ of right $\KK$-linear morphisms from $U$ to $\KK$ becomes a ring with the convolution product
\begin{equation}\label{eq:conv}
\left(f*g\right)(u) = \sum_{(u)} f\left(u_{(1)}\right)g\left(u_{(2)}\right),
\end{equation}
for all $f,g\in U^*$ and $u\in U$ and where $\Delta(u) =\sum_{(u)}u_{(1)}\tensor{\KK}u_{(2)}$ by resorting to the so-called Sweedler Sigma Notation. The interested reader may check that, since the filtration $F^n\left(\KK[Y;\partial]\right)\coloneqq \bigoplus_{k=0}^nY^k\KK$ is an admissible filtration on $U$ (in the sense of \cite[\S3.4]{LaiachiPaolo-big}) and since the translation map is filtered with respect to this filtration, $U^*$ becomes in fact a complete commutative Hopf algebroid in the sense of \cite{LaiachiPaolo-big} (see in particular \cite[Proposition 3.16]{LaiachiPaolo-big}).
\end{remark}

In what follows, we will denote by $\Circ{U}$ the finite dual Hopf algebroid of $U$ as constructed in \cite{LaiachiGomez} and not its Sweedler dual (see also \cite{ArdiLaiachiPaolo}).

\begin{remark}
The finite dual Hopf algebroid $\Circ{U}$ is not, in general, the same as the Sweedler dual of $U$. In fact, recall that the Sweedler dual of an algebra $A$ is the space of linear functionals on $A$ vanishing on a finite-codimensional two-sided ideal. If we consider the example $\KK=\C(X)$, then $U = \C(X)[Y;\partial]$ is a simple ring (in light of \cite[Theorem 1.8.4]{McConnelRobson}, for instance). Therefore, the Sweedler dual of $U$ is zero, while the finite dual in the sense of \cite{LaiachiGomez} is not. 
\end{remark}

\begin{example}
For $\KK = \C(X)$, $U$ is the universal enveloping Hopf algebroid of the Lie-Rinehart algebra $L\coloneqq \Der{\K}{\KK}=\C(X)\partial_X$, the one-dimesional vector space generated by $\partial_{X}$.
\end{example}

\begin{remark}\label{rem:action*}
Notice that $\KK$ is a (right) $U$-module with action uniquely determined by $x\cdot Y = \partial(x)$ for all $x\in \KK$. By the hom-tensor adjunction, this induces a $\K$-linear map $\mu:\KK\to {U}^*=\Hom{}{}{\KK}{}{U}{\KK}$ such that $\mu(x)(u)=x\cdot u$ for all $x\in \KK, u\in  U$. This $\mu$ turns out to be a $\K$-algebra morphism, since for all $n\geq 0$
\[
\mu(xy)\left(Y^n\right) = \partial^n(xy) \stackrel{\eqref{eq:derprod}}{=} \sum_{k=0}^n \binom{n}{k} \partial^k(x)\partial^{n-k}(y) \stackrel{\eqref{eq:conv}}{=} (\mu(x)*\mu(y))\left( Y^n\right).
\]
As a consequence, $U^*$ is a $\left(\KK\otimes\KK\right)$-algebra with $\eta':\KK\otimes\KK\to U^*$ uniquely determined by $\eta'(x\otimes 1) = x\varepsilon$ and $\eta'(1\otimes x)=\mu(x)$ for all $x\in \KK$. It is also a right $U$-module with action induced by left multiplication on $U$:
\begin{equation}\label{eq:actf}
\begin{gathered}
\xymatrix@R=0pt@C=3pt{ f\triangleleft u = f\circ\lambda_u:  & U  \ar@{->}[rrr] &&& \KK \\ & v \ar@{|->}[rrr] &&& f(uv)}
\end{gathered}
\end{equation}
 for all $f\in U^*$ and $u\in U$ and where $\lambda_u(v) = uv$ for all $v\in U$.
\end{remark}

\subsection{Differentially recursive sequences}

The set $\KK^{\N}$ of all denumerable sequences of elements in $\KK$ admits a left (eventually, symmetric) $\KK$-vector space structure induced by the equivalent description
\[
\KK^{\N}=\mathsf{Fun}(\N,\KK)=\Big\{\alpha:\N\to \KK\Big\},
\]
that is to say, given by the componentwise sum and action:
\[
\left(\alpha+\beta\right)(n)=\alpha(n)+\beta(n) \qquad \text{and} \qquad \left(x\alpha\right)(n)=x\alpha(n)
\]
for all $\alpha,\beta\in\KK^\N$, $x\in\KK$ and $n\in \N$. As a matter of notation, a sequence in $\KK^\N$ will be denoted either as $\left(\alpha_n\right)_{n\geq 0}$, or as $\left(\alpha(n)\right)_{n\geq 0}$ or simply as $\alpha$, meaning by this the function $\alpha:\N\to\KK$. On $\KK^{\N}$ we can also consider a product, called the \emph{Hurwitz product},
\begin{equation}\label{eq:Hurwitz}
(\alpha\cdot\beta)(n)=\sum_{k=0}^n \binom{n}{k}\alpha(k)\beta(n-k)
\end{equation}
for all $\alpha,\beta\in\KK^\N$ and $n\in \N$, and the \emph{shift operator} $\cN:\KK^{\N}\to \KK^{\N}$, given by $\cN(\alpha)(n)=\alpha(n+1)$ for all $\alpha\in\KK^\N$, $n\in\N$. Denote by $\cH(\KK)$ the ring $\KK^\N$ with the Hurwitz product. With respect to this structure, $\cH(\KK)$ becomes a commutative $\KK$-algebra with unit morphism $s:\KK\to \KK^{\N},\,x\mapsto (x,0,\ldots),$ and $\cN$, which is already a $\KK$-linear endomorphism, becomes a derivation. Since this structure will be fixed throughout the paper, we will often omit the $\cdot$ symbol.

\begin{remark}
Observe that $\cN(\alpha) = 0$ if and only if $\alpha(n) = x\delta_{n,0}$ for some $x\in\KK$, whence $\cH(\KK)$ is a differential algebra with subalgebra of constants $\KK$. In particular, 
\[
\cN\in\Der{\KK}{\cH(\KK)}=\Big\{\delta\in \End{\K}{\cH(\KK)} \mid \delta(\alpha\beta) = \delta(\alpha)\beta + \alpha \delta(\beta) \text{ and } \delta \circ s= 0\Big\}=:\mathsf{Der}^s_{\K}\left(\cH(\KK)\right).
\]
\end{remark}

In addition, $\cH(\KK)$ is a $\KK$-algebra with respect to $t:\KK\to\cH(\KK),\,x\mapsto \left(\partial^n(x)\right)_{n\geq 0}$. In the literature, the map $t$ has been called the \emph{Hurwitz mapping} of $\partial$ (see \cite{Keigher-huw}) and the element $t(x)$ has been called the \emph{Hurwitz expansion} of $x$. 
In particular, $\cH(\KK)$ becomes a commutative $\left(\KK\otimes\KK\right)$-algebra with unit
\[
\eta: \left(\KK\otimes\KK\right) \to \cH(\KK) ,\quad x\otimes y \mapsto s(x)t(y)=\left(x\partial^n(y)\right)_{n\in \N}.
\]
As such, we will consider it as a two-sided $\KK$-vector space with left $\KK$-action induced by $s$ and right $\KK$-action induced by $t$. It is noteworthy that $t:\left(\KK,\partial\right)\to \left(\cH(\KK),\cN\right)$ is a morphism of differential $\K$-algebras. It is also augmented over $\KK$ via the algebra map $\varepsilon : \cH(\KK) \to \KK$ sending $\alpha \mapsto \alpha_0$.

\begin{remark}\label{rem:actionseq}
Observe that $\End{\K}{\cH(\KK)}$ is naturally endowed with a two-sided $\KK$-vector space structure given by
\[
x\cdot \cF \cdot y \coloneqq  s(x)t(y)\cF : \alpha \mapsto s(x)t(y)\cF(\alpha)
\]
for all $x,y\in\KK$, $\cF\in \End{\K}{\cH(\KK)}$ and $\alpha\in \cH(\KK)$. Thus, $\cH(\KK)$ is naturally a right $U$-module, infinite-dimensional over $\KK$, with action uniquely determined by
\begin{equation}\label{eq:actalpha}
\alpha\triangleleft Y \coloneqq  \cN(\alpha)
\end{equation}
for all $\alpha\in\cH(\KK)$.
\end{remark}

We are now ready to introduce differentially recursive sequences.

\begin{definition}\label{def:LRS}
An operator $\cL=\sum_{i=0}^dt(c_i)\cN^i\in \End{\KK}{\cH(\KK)}$, with $c_i\in\KK$ and $c_d\neq 0$, is said to have \emph{order} (or \emph{degree}) $d$. We set 
$$
\dlin{\KK}\coloneqq \left\{\alpha\in\cH(\KK)\ \left| \ \cL\left(\alpha\right) = 0 \text{ for some }\cL=\sum_{i=0}^dt(c_i)\cN^i, c_i\in\KK \right. \right\}. 
$$
An element $\alpha$ of $\dlin{\KK}$ is called a \emph{differentially recursive sequence} over $\KK$. The minimum $d$ such that $\alpha$ is annihilated by an operator of order $d$ is said to be of \emph{order} $\alpha$.
\end{definition}

\begin{remark}\label{rem:terminolology}
Very informally speaking, the motivation for this terminology is twofold. On the one hand, a sequence as in Definition \ref{def:LRS} satisfies a differential recursive relation with differential coefficients in the same way a linearly recursive one satisfies a linear relation with constant coefficients (with respect to the  differential  $\cN$). On the other hand, as we will see in Remarks \ref{rem:polykill} and \ref{rem:polykilllin}, a sequence is differentially recursive if and only if it is killed by a differential polynomial while it is linearly recursive if and only if it killed by an ordinary one.
\end{remark}

\begin{example}
If $\KK=\C$ with the zero derivation, then $s=t$ and a differentially recursive sequence in the sense of Definition \ref{def:LRS} is the same as a linearly recursive sequence in the classical sense, that is to say, a sequence of elements of $\C$ which satisfies a recurrence relation with constant coefficients. Indeed, $\alpha\in\dlin{\C}$ if and only if there exists $c_0,\ldots,c_d\in\C$, $c_d=1$, such that
\[
0=\sum_{i=0}^dt(c_i)\cN^i(\alpha) = \left(\sum_{i=0}^dc_i\alpha(n+i)\right)_{n\geq 0}
\]
if and only if  $\alpha_{n+d} = -\left(c_{d-1}\alpha_{n+d-1} + c_{d-2}\alpha_{n+d-2} + \cdots + c_0\alpha_{n}\right)$, for every $n\geq 0$.
\end{example}

\begin{example}
If $\KK=\C(z)$ with derivation $\partial=\partial/\partial z$, then $t\left(p(z)/q(z)\right) = \left(\partial^n\left(p(z)/q(z)\right)\right)_{n\geq 0}$. Consider the particular case of an operator $\cL=\sum_{i=0}^dt(a_i)\cN^i$ in which $a_i\in\C$ for all $i=0,\ldots,d$. Then the elements $f\in\C(z)$ for which $\cL(t(f))=0$ are exactly the solutions to the differential equation 
\[
a_0y + a_1\partial(y) + a_2\partial^2(y) + \cdots + a_d\partial^d(y) = 0
\]
with constant coefficients in $\C(z)$.
\end{example}


\subsection{The Hopf algebroid structure on differentially recursive sequences}

Henceforth, for the sake of simplicity and clearness, a sequence $\alpha=\left(\alpha(n)\right)_{n\geq 0}=\left(\alpha_n\right)_{n\geq 0}$ will be denoted also by $\left(\alpha_\bullet\right)$.

\begin{proposition}\label{prop:iso}
We have an isomorphism of $\left(\KK\otimes\KK\right)$-algebras 
\[
\Phi:U^*\to \cH(\KK) : f \mapsto \left(f\left(Y^\bullet\right)\right)
\]
with inverse sending the sequence $\alpha$ to the right $\KK$-linear morphism $f_{\alpha}$ uniquely determined by the assignment $Y^n\mapsto \alpha(n)$ for all $n\in\N$. Moreover, $\Phi$ is also right $U$-linear with respect to the actions \eqref{eq:actf} and \eqref{eq:actalpha}, namely
\begin{equation}\label{eq:PhiLin}
\Phi\left(f\right)\triangleleft P(Y) = \Phi\left(f\circ\lambda_{P(Y)}\right)
\end{equation}
for every differential polynomial $P(Y)\in U$.
\end{proposition}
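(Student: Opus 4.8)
The plan is to exploit the freeness of $U$ as a right $\KK$-vector space on the powers $\{Y^i\}_{i\geq 0}$ recorded in Remark \ref{rem:CHAlgd}. First I would observe that, by this freeness, a right $\KK$-linear functional $f\in U^*$ is completely and freely determined by the scalars $f(Y^i)\in\KK$; hence $\Phi$ is a $\K$-linear bijection whose inverse is exactly the assignment $\alpha\mapsto f_\alpha$ described in the statement. This settles the underlying vector-space isomorphism and leaves three compatibilities to verify: that $\Phi$ is multiplicative, that it respects the $(\KK\otimes\KK)$-algebra structures, and that it is right $U$-linear.

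For multiplicativity I would evaluate the convolution $f*g$ on a generic power $Y^n$. Plugging the explicit comultiplication $\Delta(Y^n)=\sum_{k=0}^n\binom{n}{k}Y^k\tensor{\KK}Y^{n-k}$ into \eqref{eq:conv} gives $(f*g)(Y^n)=\sum_{k=0}^n\binom{n}{k}f(Y^k)g(Y^{n-k})$, which is precisely the $n$-th term of the Hurwitz product $\Phi(f)\cdot\Phi(g)$ as in \eqref{eq:Hurwitz}; thus $\Phi(f*g)=\Phi(f)\Phi(g)$, and unitality follows from $\Phi(\varepsilon)=(\delta_{0,n})_{n\geq 0}=s(1)$. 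For the $(\KK\otimes\KK)$-linearity it then suffices, since $\Phi$ is already an algebra map and $\eta'(x\otimes y)=(x\varepsilon)*\mu(y)$, to compute $\Phi$ on the two families of generators: one finds $\Phi(x\varepsilon)=(x\delta_{0,n})_{n\geq 0}=s(x)$ and, using $\mu(y)(Y^n)=\partial^n(y)$ from Remark \ref{rem:action*}, $\Phi(\mu(y))=(\partial^n(y))_{n\geq 0}=t(y)$, whence $\Phi\circ\eta'=\eta$.

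Finally, for the right $U$-linearity \eqref{eq:PhiLin} I would reduce to the algebra generators of $U$. Since both $\triangleleft$ are right $U$-actions and $U$ is generated as a $\K$-algebra by $Y$ and by $\tau(\KK)$, it is enough to check the intertwining identity $\Phi(f\triangleleft u)=\Phi(f)\triangleleft u$ for $u=Y$ and for $u=x\in\KK$, and then to propagate it to an arbitrary $P(Y)$ by the associativity of the actions. For $u=Y$ the identity reads $\cN(\Phi(f))=(f(Y^{n+1}))_{n\geq 0}$, immediate from \eqref{eq:actf}, \eqref{eq:actalpha} and the definition of $\cN$. For $u=x$ I would expand $f\circ\lambda_x$ on $Y^n$ by means of the commutation rule $xY^n=\sum_{k=0}^n\binom{n}{k}Y^k\partial^{n-k}(x)$ from \eqref{eq:useful} and the right $\KK$-linearity of $f$, obtaining $(f\circ\lambda_x)(Y^n)=\sum_{k=0}^n\binom{n}{k}\partial^{n-k}(x)f(Y^k)$; after reindexing this matches the $n$-th term of $t(x)\cdot\Phi(f)$, which is the way $x$ acts on the right of $\cH(\KK)$.

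The genuinely delicate points, rather than these computations, will be two preliminary checks. First, one must ensure the convolution \eqref{eq:conv} is well defined on the $\KK$-balanced tensor $U\tensor{\KK}U$; this relies on the right $\KK$-linearity of $f,g$ and on the commutativity of $\KK$, and is what legitimizes the binomial identities above. Second, one must confirm that the right $U$-action on $\cH(\KK)$ of Remark \ref{rem:actionseq} acts through $t$ on scalars, i.e.\ $\alpha\triangleleft x=t(x)\,\alpha$; this is forced by compatibility with the Ore relation $xY=Yx+\partial(x)$, using that $\cN$ is a derivation with $\cN(t(x))=t(\partial(x))$, and it is exactly this identification that makes the $u=x$ case of the intertwining come out correctly.
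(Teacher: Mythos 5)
Your proposal is correct, and for the only part the paper actually writes out --- the right $U$-linearity \eqref{eq:PhiLin} --- it takes a mildly different route. The paper verifies \eqref{eq:PhiLin} by a single direct computation for an arbitrary $P(Y)=\sum_{i=0}^d Y^ic_i$, expanding $\left(\Phi(f)\triangleleft P(Y)\right)(n)$ and reassembling it as $f\left(P(Y)Y^n\right)$ via the commutation rule \eqref{eq:useful} and the right $\KK$-linearity of $f$; you instead check the intertwining identity on the algebra generators $Y$ and $x\in\KK$ of $U$ and propagate it using that both sides are right $U$-actions. The computational core is the same identity $xY^n=\sum_{k}\binom{n}{k}Y^k\partial^{n-k}(x)$, so neither argument is shorter in substance, but your decomposition isolates exactly where the Ore relation enters (namely, in checking that $\alpha\triangleleft x=t(x)\alpha$ together with $\alpha \triangleleft Y = \cN(\alpha)$ is compatible with $xY=Yx+\partial(x)$, which is what makes Remark \ref{rem:actionseq} legitimate), whereas the paper's single computation handles this implicitly. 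You also spell out the bijectivity, the multiplicativity via $\Delta(Y^n)$ versus the Hurwitz product, and the compatibility with $\eta'$ and $\eta$ on the generators $x\otimes 1$ and $1\otimes y$; the paper declares these routine and omits them, but your verifications are the intended ones and are correct (including the observation that the commutativity of $\KK$ is what makes the convolution \eqref{eq:conv} well defined over $\tensor{\KK}$).
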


\begin{proof}
Let us check explicitly only the last claim. If $P(Y)=\sum_{i=0}^dY^ic_i$, $c_i\in\KK$, then for all $n\geq 0$
\begin{gather*}
\left(\Phi(f)\triangleleft P(Y)\right)(n) = \left(\sum_{i=0}^dt(c_i) \cN^i\left(\Phi(f)\right)\right)(n) = \sum_{i=0}^d\sum_{k=0}^n\binom{n}{k} \partial^k(c_i)f\left(Y^{n-k+i}\right) = f\left(\sum_{i=0}^d\sum_{k=0}^n\binom{n}{k}Y^{n-k+i} \partial^k(c_i)\right) \\
 = f\left(\sum_{i=0}^dY^ic_iY^{n}\right) = \left(f\circ\lambda_{P(Y)}\right)(Y^n) = \Phi\left(f\circ\lambda_{P(Y)}\right)(n). \qedhere
\end{gather*}
\end{proof}

\begin{corollary}
The pair $(\KK, \cH(\KK))$ is a complete Hopf algebroid with respect to the structure maps and the linear topology 
coming from that of $U^*$ via $\Phi$.
\end{corollary}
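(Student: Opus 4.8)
The plan is to obtain the statement as a pure transport-of-structure result along the isomorphism $\Phi$ produced in Proposition \ref{prop:iso}. As recalled in Remark \ref{rem:CHAlgd}, the right $\KK$-linear dual $U^* = \Hom{}{}{\KK}{}{U}{\KK}$ is already a complete commutative Hopf algebroid over $\KK$ in the sense of \cite{LaiachiPaolo-big}, its linear topology being the dual-filtration topology attached to the admissible filtration $F^n(U) = \bigoplus_{k=0}^n Y^k\KK$; that is, a fundamental system of neighbourhoods of $0$ is given by the annihilators $F^n(U)^{\perp} = \{ f \in U^* \mid f(Y^k) = 0,\ 0\le k\le n\}$. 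Since Proposition \ref{prop:iso} already exhibits $\Phi$ as an isomorphism of $(\KK\otimes\KK)$-algebras intertwining $\eta'$ with $\eta$, the source and target of $U^*$ are carried precisely to the maps $s$ and $t$ on $\cH(\KK)$, and I would simply define all the remaining structure maps on $\cH(\KK)$ to be the $\Phi$-conjugates of their counterparts on $U^*$. First I would check that these transported maps agree with the ones already in play: the counit $f\mapsto f(1)$ on $U^*$ corresponds under $\Phi$ to $\alpha\mapsto\alpha(0)$, i.e. to the augmentation $\varepsilon:\cH(\KK)\to\KK$ introduced above, while the comultiplication and the antipode are declared to be $\Phi\circ(-)\circ\Phi^{-1}$ of those of $U^*$.

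Because every axiom of a complete Hopf algebroid is encoded by a commutative diagram of $(\KK\otimes\KK)$-(bi)module and algebra morphisms, and $\Phi$ is an isomorphism in each of the relevant categories, these axioms descend to $\cH(\KK)$ the moment they are known for $U^*$, with no further algebraic computation. The hard part will be the topological bookkeeping, because the comultiplication of a complete Hopf algebroid takes values in a \emph{completed} tensor product $U^*\cmptens{\KK}U^*$, so the whole argument only goes through if $\Phi$ is a homeomorphism and not merely an algebra isomorphism. I would transport the topology by taking the images $\Phi\big(F^n(U)^{\perp}\big)$ as a basis of neighbourhoods of $0$; explicitly these are the sequences $\alpha$ with $\alpha(0)=\cdots=\alpha(n)=0$, so the transported topology is exactly the usual product topology on $\KK^\N=\cH(\KK)$ with $\KK$ discrete.

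With respect to this topology $\Phi$ is a filtered isomorphism in both directions, hence a homeomorphism between complete linearly topologised modules; consequently $\Phi\cmptens{\KK}\Phi$ is an isomorphism of the completed tensor products, the transported comultiplication is continuous and lands in $\cH(\KK)\cmptens{\KK}\cH(\KK)$, and $\cH(\KK)$ inherits completeness from $U^*$. The only genuinely substantive point is therefore this compatibility of $\Phi$ with the filtrations — equivalently, that $\Phi$ carries the dual-filtration topology of $U^*$ to the product topology of $\KK^\N$ — after which the complete Hopf algebroid axioms for $(\KK,\cH(\KK))$ follow formally from those for $(\KK,U^*)$ recalled in Remark \ref{rem:CHAlgd}.
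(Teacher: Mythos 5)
Your proposal is correct and follows the same route as the paper, whose proof is exactly the transport-of-structure argument: it invokes Proposition \ref{prop:iso} together with Remark \ref{rem:CHAlgd} (where $U^*$ is identified as a complete commutative Hopf algebroid via the admissible filtration $F^n(U)=\bigoplus_{k=0}^n Y^k\KK$). Your additional verification that $\Phi$ carries the dual-filtration topology to the product topology on $\KK^{\N}$, so that the completed tensor products match, is precisely the point the paper leaves implicit.
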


\begin{proof}
It follows from Proposition \ref{prop:iso} together with the observation of Remark \ref{rem:CHAlgd}.
\end{proof}

\begin{remark}\label{rem:polykill}
Observe that if $P(Y)\coloneqq \sum_{i=0}^dY^ic_i$ then $\cL(\alpha) = \alpha\triangleleft P(Y)$, where $\cL = \sum_{i=0}^dt(c_i)\cN^i$. This in particular justifies the terminology used before: \emph{the degree of $\alpha$}. Furthermore, thanks to the division algorithm on $\KK[Y;\partial]$ (see \cite{Ore}), for every $\alpha\in\dlin{\KK}$ we may assume that $\cL$ such that $\cL(\alpha)=0$ is the operator associated with the monic generator $P(Y)$ of $\ann{\alpha} = \{Q(Y)\in U \mid \alpha \triangleleft Q(Y) = 0\}$.
\end{remark}

\begin{proposition}\label{prop:vanish}
Both maps $s,t:\KK\to\cH(\KK)$ land into $\dlin{\KK}$. Moreover, via $\Phi$ of Proposition \ref{prop:iso}, differentially recursive sequences correspond to linear maps $f:U\to\KK$ vanishing on a principal right ideal of $U$. More precisely, for $f\in U^*$ we have that $\Phi(f)\in \ker{\cL}$ with $\cL=\sum_{i=0}^dt(c_i)\cN^i$, $c_i\in\KK$, if and only if $\ker{f}\supseteq P(Y)U$, the principal right ideal generated by $P(Y) = \sum_{i=0}^dY^ic_i$ in $U$.
\end{proposition}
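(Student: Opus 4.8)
The plan is to reduce every assertion to the $\left(\KK\otimes\KK\right)$-algebra and right $U$-module isomorphism $\Phi$ of Proposition \ref{prop:iso}, so that the statement about differential recursions translates into a statement about right ideals of $U$. The heart of the matter is the ``more precisely'' clause, from which the middle summary sentence is then immediate; I would establish that equivalence first and only afterwards treat the landing of $s$ and $t$.

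For the core equivalence, fix $f\in U^*$ and write $P(Y)=\sum_{i=0}^dY^ic_i$ and $\cL=\sum_{i=0}^dt(c_i)\cN^i$. By Remark \ref{rem:polykill} we have $\cL(\Phi(f))=\Phi(f)\triangleleft P(Y)$, and by the right $U$-linearity of $\Phi$ recorded in \eqref{eq:PhiLin} this equals $\Phi\left(f\circ\lambda_{P(Y)}\right)$. Since $\Phi$ is a bijection, $\Phi(f)\in\ker{\cL}$ holds if and only if $f\circ\lambda_{P(Y)}=0$. Now $\lambda_{P(Y)}(v)=P(Y)v$, so the image of $\lambda_{P(Y)}$ is exactly the principal right ideal $P(Y)U$; hence $f\circ\lambda_{P(Y)}=0$ is equivalent to $f$ vanishing on $P(Y)U$, that is, to $\ker{f}\supseteq P(Y)U$. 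This is the desired equivalence, and the middle assertion is just its qualitative form: $\Phi$ identifies $\dlin{\KK}$ with the set of $f\in U^*$ vanishing on some principal right ideal, because $\alpha\in\dlin{\KK}$ means precisely that $\alpha\in\ker{\cL}$ for some such $\cL$.

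It then remains to check that $s$ and $t$ take values in $\dlin{\KK}$. For $s(x)=(x,0,0,\dots)$ one has $\cN(s(x))=0$, so $s(x)$ is annihilated by $\cL=\cN$ (corresponding to $P(Y)=Y$) and lies in $\dlin{\KK}$. For $t(x)=\left(\partial^n(x)\right)_{n\geq 0}$ I would use that $t$ is a morphism of differential $\K$-algebras, so that $\cN^i(t(x))=t(\partial^i(x))$ and hence $\cL(t(x))=t\left(\sum_{i=0}^dc_i\partial^i(x)\right)$ for any $\cL=\sum_it(c_i)\cN^i$; since $t$ is injective, finding an annihilating $\cL$ amounts to finding $c_i\in\KK$, not all zero, with $\sum_ic_i\partial^i(x)=0$. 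As $\KK$ is a field this is immediate: for $x\neq 0$ the choice $c_1=1$, $c_0=-\partial(x)x^{-1}$ gives $\cL=\cN-t\left(\partial(x)x^{-1}\right)$ annihilating $t(x)$, while $t(0)=0$ lies trivially in $\dlin{\KK}$.

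I do not expect a serious obstacle here: once Proposition \ref{prop:iso} and Remark \ref{rem:polykill} are in place, the equivalence is a formal consequence of the injectivity and right $U$-linearity of $\Phi$. The only points requiring a little care are identifying the image of $\lambda_{P(Y)}$ with the \emph{principal} right ideal $P(Y)U$ (and not some larger submodule), and, for the landing of $t$, invoking both the differential-algebra-morphism property and the injectivity of $t$ together with the observation that over a field any nonzero element and its derivative are linearly dependent.
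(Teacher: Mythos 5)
Your proof is correct and follows essentially the same route as the paper: the core equivalence is obtained exactly as in the paper from $\cL(\Phi(f))=\Phi(f)\triangleleft P(Y)=\Phi\bigl(f\circ\lambda_{P(Y)}\bigr)$ together with the invertibility of $\Phi$, and the landing of $s$ and $t$ is checked by exhibiting the same annihilating operators (the paper writes $\cL=t(\partial(x))-t(x)\cN$ for $t(x)$, which is just the non-normalized form of your monic $\cN-t\bigl(\partial(x)x^{-1}\bigr)$). The only differences are cosmetic: the order of the two parts and your explicit appeal to the injectivity of $t$, which the paper does not need since it verifies $\cL(t(x))=0$ directly.
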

\begin{proof}
Clearly, for every $x\in\KK$ we have that $\cN(s(x)) = 0$, whence $s(x)$ is differentially recursive. Moreover, since we already know that $\cN(t(x)) = t(\partial(x))$ because $t$ is a morphism of differential algebras, it is clear also that $\cL(t(x))=0$ with $\cL\coloneqq t(\partial(x)) - t(x)\cN$, whence $t(x)$ is differentially recursive as well. This proves the first claim. Concerning the second claim, assume that $\Phi(f)\in \ker{\cL}$ with $\cL=\sum_{i=0}^dt(c_i)\cN^i$ and set $P(Y) \coloneqq  \sum_{i=0}^dY^ic_i\in U$. Then,
\[
0 = \cL(\Phi(f)) = \Phi(f)\triangleleft P(Y) \stackrel{\eqref{eq:PhiLin}}{=} \Phi(f\circ\lambda_{P(Y)}).
\]
However, being $\Phi$ invertible, this means that for every $u\in U$, $f(P(Y)u)=0$ and hence $\ker{f}\supseteq P(Y)U$. Conversely, if $\ker{f}\supseteq P(Y)U$ then $f\circ \lambda_{P(Y)}\equiv 0$ and hence
\[
0 = \Phi(f\circ\lambda_{P(Y)}) \stackrel{\eqref{eq:PhiLin}}{=} \Phi(f)\triangleleft P(Y).
\]
Thus, $\cL\left(\Phi(f)\right)=0$ with operator $\cL=\sum_{i=0}^dt(c_i)\cN^i$.
\end{proof}

As a matter of notation, for every $f\in U^*$ as in Proposition \ref{prop:vanish} we will write $P_f(Y)$ for the polynomial $\sum_{i=0}^dY^ic_i$, so that $\ker{f}\supseteq P_f(Y)U$. If moreover $f = f_\alpha \coloneqq \Phi^{-1}(\alpha)$ then we will write $P_\alpha(Y)$ instead of $P_f(Y)$. In particular, by an harmless abuse of notation, from time to time we will write $\cL=P_\alpha(\cN)$. Observe that for every $x\in\KK$, $P_{t(x)}(Y) = \partial(x) - Yx$.

\begin{remark}\label{rem:Varios}
Let us observe that the foregoing arguments and constructions can be realized over any differential algebra $(A,\partial)$ instead of a differential field $(\KK,\partial)$ with no additional effort. In this more general context one may also show that the assignment $R\mapsto \cH(R)$ induces a functor $\cH:\algk_\K \to \dalgk_\K$ which is right adjoint to the underlying functor $\cU:\dalgk_\K \to \algk_\K$ forgetting the differential structure (see \cite{Keigher-adj}). The unit of this adjunction is exactly $t:(A,\partial) \to (\cH(A),\cN), \, a\mapsto (\partial^\bullet(a)),$ and the counit is $\epsilon:\cH(R) \to R,\,\alpha\mapsto \alpha(0)$. The distinctive feature of the field case is that we may always assume $P_\alpha(Y)$ to be monic and hence the ideal $P_\alpha(Y)U$ to be finite-codimensional, as we will need later on.
\end{remark}

Our next objective is to show that $\dlin{\KK}$ is isomorphic to $\Circ{U}$ as $\left(\KK\otimes\KK\right)$-algebra. Recall from \cite{LaiachiGomez} that $\Circ{U}$ is constructed out of the symmetric rigid monoidal $\K$-linear abelian category of finite-dimensional differential $\KK$-vector spaces. That is to say, out of those finite-dimensional (right) $\KK$-vector spaces $M$ endowed with a $\K$-linear endomorphism $\partial_M:M\to M$ satisfying
\[
\partial_M(mx) = \partial_M(m)x + m\partial(x)
\]
for all $m\in M$, $x\in \KK$. Notice that the following extension of \eqref{eq:derprod} holds
\begin{equation}\label{eq:deract}
\partial_M^n(mx) = \sum_{k=0}^n\binom{n}{k}\partial_M^k(m)\partial^{n-k}(x).
\end{equation}
Henceforth, we resort to the notation of \cite[\S 3.1]{LaiachiGomez}. For a given differential $\KK$-module $M$, we denote by $T_{M}=\End{}{(M,\partial)}$ its $\K$-algebra of differential endomorphisms (e.g., $T_{\KK}=\K$), and for two given differential $\KK$-modules $M,N$, we denote by $T_{MN}$ the $\K$-vector space of all differential morphisms from $(M, \partial_M)$ to $(N,\partial_N)$. The $\left(\KK\otimes\KK\right)$-algebra $\Circ{U}$ is by definition the quotient two-sided $\KK$-vector space:
$$
\Circ{U}\coloneqq  \frac{\underset{\Sscript{(M,\partial)}}{\bigoplus} \,  M^*\tensor{T_M}M}{\langle \psi\tensor{T_N} f x - \psi f \tensor{T_M}x \rangle_{\psi \in N^*,\, x \in M,\, f \in T_{MN} }}
$$
with multiplication (see \cite[Equation (20)]{LaiachiGomez})
\begin{equation}\label{eq:multcirc}
\left( \overline{\varphi \tensor{P} p} \right) \cdot \left( \overline{\psi \tensor{Q} q} \right) = \overline{(\psi \diamond \varphi) \tensor{T_{Q\tensor{\KK}P}} (q \tensor{\KK} p)},
\end{equation}
where $\psi \diamond \varphi : Q\tensor{\KK}P \to \KK,\, q\tensor{\KK}p\mapsto \varphi\left(\psi(q)p\right)$, where the overlined notation stands for the equivalence class of a given element in the direct sum displayed in the numerator.

\begin{remark}\label{rem:hopfalgd}
The $\left(\KK\otimes\KK\right)$-algebra $\Circ{U}$ is, in fact, a commutative Hopf algebroid (see \cite[Theorem 4.2.2]{LaiachiGomez}) with source and target
\[
s_\circ: \KK \to \Circ{U}, \quad k\mapsto \overline{k\tensor{\K}1} \qquad \text{and} \qquad t_\circ:\KK \to \Circ{U}, \quad k\mapsto \overline{1\tensor{\K}k},
\]
comultiplication
\[
\Delta_\circ: \Circ{U} \to \Circ{U}\tensor{\KK}\Circ{U}, \quad \overline{\varphi \tensor{T_M} m } \mapsto \sum_i \overline{\varphi \tensor{T_M} e_i} \tensor{\KK} \overline{e_i^* \tensor{T_M} m} \qquad \Big(\{e_i,e_i^*\}\text{ dual basis of }M\Big),
\]
counit $\varepsilon_\circ:\Circ{U} \to \KK,\, \overline{\varphi \tensor{T_M} m } \mapsto \varphi(m)$, and antipode
\[
S_\circ : \Circ{U} \to \Circ{U}, \qquad \overline{\varphi \tensor{T_M} m } \mapsto \overline{\mathsf{ev}_m\tensor{T_{M^*}} \varphi },
\]
where $\mathsf{ev}_m:M^* \to \KK,\varphi\mapsto \varphi(m)$ is the evaluation at $m$. In addition, it is a differential $\KK$-algebra with respect to the source $s_\circ$ and the derivation
\begin{equation}\label{eq:diffcirc}
\partial_\circ : \Circ{U} \to \Circ{U}, \qquad \overline{\varphi \tensor{T_M} m } \mapsto \overline{\varphi \tensor{T_M} m\triangleleft Y },
\end{equation}
and the target $t_\circ$ becomes a morphism of differential algebras with respect to this structure.
\end{remark}

Furthermore, $\Circ{U}$ always comes with a canonical morphism of $\left(\KK\otimes\KK\right)$-algebras $\zeta:\Circ{U} \to U^*$ which, in this particular situation, turns out to be injective (see \cite[Corollary 3.3.6]{LaiachiGomez}).

Our first step will be that of showing that, for a given differential vector space $(M,\partial_M)$ and a given element $\overline{\varphi\tensor{T_M}m}\in \Circ{U}$ (i.e., the  equivalence class of the homogeneous element $\varphi\tensor{T_M}m \in M^*\tensor{T_M}M$), the sequence $$
\left(\varphi\left(\partial_M^\bullet(m)\right)\right) = \Phi\left(\zeta\left(\overline{\varphi\tensor{T_M}m}\right)\right)\in \KK^{\N}
$$ 
satisfies a particular kind of (differential) recursion.

\begin{proposition}\label{prop:big1}
Given an element of the form $\overline{\varphi\tensor{T_M}m}\in\Circ{U}$, then $\alpha\coloneqq \left(\varphi\left(\partial^\bullet_M(m)\right)\right)$ is a differentially recursive sequence in $\cH(\KK)$. That is to say, there exist $0 \leq d\leq \dim_{\KK}(M)$ and $c_0,\ldots,c_d\in \KK$ such that $\sum_{i=0}^dt\left(c_i\right)\cN^i(\alpha)=0$.
\end{proposition}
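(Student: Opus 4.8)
The plan is to leverage the finite-dimensionality of $M$: the iterated $\partial_M$-derivatives of $m$ must eventually become $\KK$-linearly dependent, and such a dependence relation translates, through the natural right $U$-module structure on $M$, into the vanishing of $f_\alpha := \Phi^{-1}(\alpha)$ on a principal right ideal --- which is exactly the condition isolated in Proposition \ref{prop:vanish}.

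First I would record that the defining axiom $\partial_M(mx) = \partial_M(m)x + m\partial(x)$ of a differential $\KK$-module is precisely the compatibility that turns $M$ into a right $U = \KK[Y;\partial]$-module, via $m\cdot Y^i = \partial_M^i(m)$ and $m\cdot x = mx$ for $x\in\KK$; indeed, this is exactly what is needed for the Ore relation $xY = Yx + \partial(x)$ to be respected. Setting $N := \dim_{\KK}(M)$, the $N+1$ vectors $m, \partial_M(m), \dots, \partial_M^N(m)$ are then right $\KK$-linearly dependent, so there are $c_0,\dots,c_N\in\KK$, not all zero, with $\sum_{i=0}^N \partial_M^i(m)\, c_i = 0$. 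Writing $P(Y) := \sum_{i=0}^N Y^i c_i \in U$, this dependence is nothing but $m\cdot P(Y) = \sum_{i=0}^N (m\cdot Y^i)\,c_i = 0$, and $d := N \leq \dim_{\KK}(M)$ gives the claimed bound (discarding vanishing top coefficients if an honest leading term is desired).

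Next I would identify $f_\alpha = \Phi^{-1}(\alpha)$ explicitly. By Proposition \ref{prop:iso}, $f_\alpha$ is the right $\KK$-linear functional determined by $f_\alpha(Y^n) = \alpha(n) = \varphi\big(\partial_M^n(m)\big) = \varphi(m\cdot Y^n)$. Since $u\mapsto\varphi(m\cdot u)$ is right $\KK$-linear (because $\varphi\in M^*$ and the $U$-action is right $\KK$-linear) and agrees with $f_\alpha$ on the right $\KK$-basis $\{Y^n\}_{n\geq 0}$ of $U$, we conclude that $f_\alpha(u) = \varphi(m\cdot u)$ for every $u\in U$. Hence, for all $u\in U$,
\[
f_\alpha\big(P(Y)u\big) = \varphi\big(m\cdot(P(Y)u)\big) = \varphi\big((m\cdot P(Y))\cdot u\big) = \varphi(0) = 0,
\]
so that $\ker{f_\alpha} \supseteq P(Y)U$. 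Applying Proposition \ref{prop:vanish} to $f_\alpha$ and $P(Y) = \sum_{i=0}^d Y^i c_i$ then yields $\alpha = \Phi(f_\alpha)\in\ker{\cL}$ with $\cL = \sum_{i=0}^d t(c_i)\cN^i$, i.e., $\sum_{i=0}^d t(c_i)\cN^i(\alpha) = 0$ with $d\leq\dim_{\KK}(M)$, as desired. Equivalently, one may bypass Proposition \ref{prop:vanish} and compute directly $\cL(\alpha) = \Phi(f_\alpha)\triangleleft P(Y) \stackrel{\eqref{eq:PhiLin}}{=} \Phi(f_\alpha\circ\lambda_{P(Y)}) = \Phi(0) = 0$, using Remark \ref{rem:polykill}.

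The only genuinely delicate point is the bookkeeping at the very start: verifying that $M$ carries a right $U$-module structure and, crucially, that the linear dependence among the $\partial_M^i(m)$ is read off on the correct (right) $\KK$-side, so that it is compatible both with the right $\KK$-linearity of $\varphi$ and with the way Proposition \ref{prop:vanish} encodes differential recursions through \emph{principal right} ideals of $U$. Once this is set up correctly, the rest is a dimension count combined with the previously established properties of $\Phi$.
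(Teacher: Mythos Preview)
Your proof is correct and follows essentially the same approach as the paper: both arguments hinge on the observation that finite-dimensionality of $M$ forces a right $\KK$-linear dependence among $m,\partial_M(m),\ldots,\partial_M^{d'}(m)$. The only difference is packaging: where the paper applies $\partial_M^n$ to this relation via the Leibniz rule \eqref{eq:deract} and then applies $\varphi$ to obtain $\cL(\alpha)(n)=0$ by direct computation, you instead encode the dependence as $m\cdot P(Y)=0$ in the right $U$-module $M$, identify $f_\alpha=\varphi(m\cdot-)$, and invoke Proposition~\ref{prop:vanish} (or \eqref{eq:PhiLin}) to conclude---effectively reusing the computation the paper performs in place.
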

\begin{proof}
Write $d'\coloneqq \dim_{\KK}(M)$. Then the set $\left\{m,\partial_M(m),\ldots,\partial^{d'}_M(m)\right\}$ is linearly dependent over $\KK$, in the sense that there exist $c_0,\ldots,c_{d'}$ in $\KK$ such that $mc_0+\partial_M(m)c_1+\cdots+\partial^{d'}_M(m)c_{d'} =0$. If $0 \leq d\leq d'$ is the maximum index for which $c_{d}\neq 0$, we may look at the set $\left\{m,\partial_M(m),\ldots,\partial^{d}_M(m)\right\}$, which is still linearly dependent over $\KK$, and at the relation $mc_0+\partial_M(m)c_1+\cdots+\partial^{d}_M(m)c_{d} =0$ instead. Then
\begin{equation}\label{eq:dern0}
0 = \partial^n_M\left(\sum_{i=0}^{d} \partial^i_M(m)c_i\right) \stackrel{\eqref{eq:deract}}{=} \sum_{i=0}^{d}\sum_{k=0}^{n}\binom{n}{k}\partial^{i+n-k}_M(m)\partial^k(c_i).
\end{equation}
for all $n\geq 0$ and hence
\begin{gather*}
0=\varphi\left(\sum_{i=0}^{d}\sum_{k=0}^{n}\binom{n}{k}\partial^{i+n-k}_M(m)\partial^k(c_i)\right) = \sum_{i=0}^{d}\sum_{k=0}^{n}\binom{n}{k}\varphi\left(\partial^{i+n-k}_M(m)\right)\partial^k(c_i) \\
 = \sum_{i=0}^{d}\sum_{k=0}^{n}\binom{n}{k}\partial^k(c_i)\cN^i\left(\varphi\left(\partial^{\bullet}_M(m)\right)\right)(n-k) = \left(\sum_{i=0}^d t(c_i)\cN^i\left(\varphi\left(\partial^{\bullet}_M(m)\right)\right)\right)(n)
\end{gather*}
as claimed.
\end{proof}

Conversely, pick $\alpha\in\cH(\KK)$ and consider
\begin{equation}\label{eq:alphaf}
\begin{gathered}
\xymatrix@R=0pt@C=10pt{ 
f_{\alpha} \coloneqq \Phi^{-1}(\alpha) : U \ar@{->}[rr] && \KK \\  
\hspace{2.1cm} Y^n \ar@{|->}[rr] && \alpha(n) 
}
\end{gathered}.
\end{equation}
If $\alpha\in \dlin{\KK}$ then, by definition, there exists $\cL=\sum_{i=0}^dt(c_i)\cN^i$ such that $\alpha\in\ker{\cL}$ and hence $f_\alpha$ vanishes on $P_\alpha(Y)U$ where $P_\alpha(Y) = \sum_{i=0}^dY^ic_i$. 
The quotient $U/P_\alpha(Y)U$ is a right $U$-module and we may consider the morphism $\tilde{f}_\alpha \in \Hom{}{}{\KK}{}{U/P_\alpha(Y)U}{\KK}$ induced by $f_\alpha$. We denote by $\bara{1}\coloneqq  1_{\Sscript{U}} + P_\alpha(Y)U$ the equivalence class of the unit $1_{\Sscript{U}}$ of the algebra $U$. 

\begin{proposition}\label{prop:big2}
For $\alpha\in \dlin{\KK}$, the right $\KK$-vector space $M_\alpha\coloneqq U/P_\alpha(Y)U$ is finite-dimensional and a differential $\KK$-vector space with differential $\partial_{M_\alpha}$ given by acting on the right via $Y$. 

If $\alpha$ is of order $d$ (\ie it is annihilated by an operator $\cL$ of order $d$), then $\dim_{\KK}\left(M_\alpha\right)=d$. In particular, the element $\overline{\tilde{f}_\alpha \tensor{T_{M_\alpha}} \cl{1}}$ is a well-defined element in $\Circ{U}$. Moreover, for all $n\in \N$, it satisfies
\begin{equation}\label{eq:ftilde}
\tilde{f}_\alpha\left(\partial_{M_\alpha}^n\left(\cl{1}\right)\right) = \alpha(n).
\end{equation}
\end{proposition}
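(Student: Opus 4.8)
The plan is to decompose the statement into four tasks and address them in sequence: (i) that $M_\alpha = U/P_\alpha(Y)U$ is finite-dimensional over $\KK$, with dimension equal to $\deg P_\alpha$ (which is the order $d$ of $\alpha$ under the stated hypothesis); (ii) that right multiplication by $Y$ makes $M_\alpha$ a differential $\KK$-vector space; (iii) that $\overline{\tilde{f}_\alpha \tensor{T_{M_\alpha}} \cl{1}}$ is a bona fide element of $\Circ{U}$; and (iv) the evaluation identity \eqref{eq:ftilde}. The substance of the proof lies entirely in (i); the remaining items are direct unwindings of definitions.

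For (i), I would rely on the division algorithm in the Ore extension $U = \KK[Y;\partial]$, available because $\KK$ is a field so that leading coefficients are invertible (cf. \cite{Ore}, as invoked in Remark \ref{rem:polykill}). By that remark we may take $P_\alpha(Y)$ to be the monic generator of $\ann{\alpha}$, of degree the order $d$ of $\alpha$, so its leading term is $Y^d$. I claim that $\{\cl{1}, \overline{Y}, \dots, \overline{Y^{d-1}}\}$ is a right $\KK$-basis of $M_\alpha$. For spanning, given $u\in U$ of degree $n\geq d$ with leading term $Y^n a$, the element $P_\alpha(Y)\,Y^{n-d}a$ lies in the right ideal $P_\alpha(Y)U$ and has the same leading term $Y^n a$ (here one uses the commutation rule $xY^k = \sum_j \binom{k}{j}Y^j\partial^{k-j}(x)$ from \eqref{eq:useful} to see that the lower terms of $P_\alpha$ contribute only in degrees $<n$); subtracting it strictly lowers the degree, so iterating produces a representative of degree $<d$. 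For independence, any nonzero element of $P_\alpha(Y)U$ is of the form $P_\alpha(Y)q$ with $q\neq 0$ and hence has degree $\geq d$; so a relation $\sum_{i<d}Y^i a_i \in P_\alpha(Y)U$ would force degree $<d$ and $\geq d$ at once unless all $a_i=0$. This yields $\dim_{\KK}(M_\alpha)=d$ and, in particular, finite-dimensionality.

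The formal steps follow quickly. For (ii), right multiplication by $Y$ preserves the right ideal $P_\alpha(Y)U$, hence descends to a $\K$-linear map $\partial_{M_\alpha}(\overline{u}):=\overline{uY}$; the Leibniz rule is read off the defining relation $xY=Yx+\partial(x)$, namely $\partial_{M_\alpha}(\overline{u}\,x)=\overline{uxY}=\overline{uYx}+\overline{u\partial(x)}=\partial_{M_\alpha}(\overline{u})\,x+\overline{u}\,\partial(x)$ for $x\in\KK$. Thus $M_\alpha$ is a finite-dimensional differential $\KK$-module, i.e. an object of the category defining $\Circ{U}$. For (iii), Proposition \ref{prop:vanish} gives that $f_\alpha=\Phi^{-1}(\alpha)$ is right $\KK$-linear and vanishes on $P_\alpha(Y)U$, so it factors through $\tilde{f}_\alpha\in\Hom{}{}{\KK}{}{M_\alpha}{\KK}=M_\alpha^*$, and the class $\overline{\tilde{f}_\alpha \tensor{T_{M_\alpha}} \cl{1}}$ is therefore legitimate. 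Finally, for (iv), iterating (ii) gives $\partial_{M_\alpha}^n(\cl{1})=\overline{Y^n}$, whence $\tilde{f}_\alpha(\partial_{M_\alpha}^n(\cl{1}))=f_\alpha(Y^n)=\alpha(n)$ by the definition \eqref{eq:alphaf} of $f_\alpha$.

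The main obstacle I anticipate is confined to (i): one must track leading terms carefully through the noncommutative product, where multiplying a coefficient past powers of $Y$ differentiates it (via \eqref{eq:useful}) but leaves the top-degree coefficient untouched, and one must perform the division on the correct side so that every subtracted term genuinely lies in the right ideal $P_\alpha(Y)U$. Once the basis in (i) is secured, steps (ii)--(iv) are purely formal.
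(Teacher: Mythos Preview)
Your proposal is correct and follows essentially the same approach as the paper: both establish the basis $\{\overline{Y^i}\}_{0\le i\le d-1}$ via a degree argument for independence and a reduction-to-lower-degree argument for spanning, then read off (ii)--(iv) formally. Your spanning step, phrased as the Ore division algorithm (iteratively subtract $P_\alpha(Y)Y^{n-d}a$), is a slightly more streamlined packaging of the paper's explicit induction on $r$ in $j=d+r$, but the underlying mechanism---tracking the leading term through the commutation rule \eqref{eq:useful}---is identical.
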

\begin{proof}
If $P_\alpha(Y)=\sum_{i=0}^dY^ic_i$ and $d=\deg(P_\alpha(Y))$, then we claim that $M\coloneqq M_\alpha=U/P_\alpha(Y)U$ is the right $\KK$-vector space generated by the vectors $y_i\coloneqq Y^i+P_\alpha(Y)U$ for $i=0,\ldots,d-1$. Consider the obvious right $\KK$-linear morphism
\[
\psi:\bigoplus_{i=0}^{d-1}y_i\KK \to \frac{U}{P_\alpha(Y)U}
\]
induced by the inclusions $y_i\KK \subseteq U/P_\alpha(Y)U$. Let us see first that it is injective. Assume that
\[
0=\psi\left(\sum_{j=0}^{d-1}y_jx_j\right) = \sum_{j=0}^{d-1}Y^jx_j + P_\alpha(Y)U
\]
and assume, by contradiction, that $\sum_{j=0}^{d-1}y_jx_j \neq 0$. Then there exists $u\in U$ non-zero such that  $\sum_{j=0}^{d-1}Y^jx_j = \left(\sum_{i=0}^dY^ic_i\right)u$. Let $Y^su_s$ be the leading term of $u$ (the summand with highest degree such that $u_s\neq 0$). Then
\[
\left(\sum_{i=0}^dY^ic_i\right)Y^su_s \stackrel{\eqref{eq:useful}}{=} \sum_{i=0}^d\sum_{k=0}^s\binom{s}{k}Y^{i+k}\partial^{s-k}(c_i)u_s = Y^{d+s}c_du_s + \sum_{i=0}^{d-1}Y^{i+s}c_iu_s + \sum_{i=0}^d\sum_{k=0}^{s-1}\binom{s}{k}Y^{i+k}\partial^{s-k}(c_i)u_s
\]
and hence $Y^{d+s}c_du_s$ is a summand of $\left(\sum_{i=0}^dY^ic_i\right)u$. However, being $s\geq 0$, $Y^{s+d}$ appears with $0$ coefficient in $\sum_{j=0}^{d-1}Y^jx_j$ and hence we should have $c_du_s=0$, which is a contradiction. Summing up, $\sum_{j=0}^{d-1}y_jx_j = 0$ and $\psi$ is injective. 
To see that it is surjective, consider the following facts. First of all, since $c_d\neq 0$, we may assume that $c_d=1$ and hence that $y_d=\sum_{h=0}^{d-1}y_hc_h'$. On the one hand, for every $0\leq j\leq d-1$, $y_j = Y^j + P_\alpha(Y)U = \psi(y_j)$. On the other hand, let us show by induction on $r\geq 0$ that for every $j = d + r$ we have that $y_j\in\sum_{i=0}^{d-i}y_i\KK$. For $r=0$, we know that $y_d=\sum_{h=0}^{d-1}y_hc_h' \in \sum_{i=0}^{d-1}y_i\KK$. Assume then that the property holds for all $0\leq l \leq r-1$ and let us see that it holds for $r$. Compute
\begin{gather*}
y_j = Y^j + P_\alpha(Y)U = Y^dY^{r} + P_\alpha(Y)U = \left(Y^d + P_\alpha(Y)U\right)\triangleleft Y^{r} \\
= \left(\sum_{h=0}^{d-1} Y^hc_h' + P_\alpha(Y)U\right)\triangleleft Y^{r} \stackrel{\eqref{eq:useful}}{=} \sum_{h=0}^{d-1}\sum_{k=0}^{r}\binom{r}{k} Y^{h+k}\partial^{r-k}\left(c_h'\right) + P_\alpha(Y)U \\
= \sum_{h=0}^{d-1}\sum_{k=0}^{r}\binom{r}{k} \left(Y^{h+k} + P_\alpha(Y)U\right)\partial^{r-k}\left(c_h'\right),
\end{gather*}
where $\triangleleft$ is the $U$-action on $M$. Now, for all $h=0,\ldots,d-1$, $k=0,\ldots,r$, we have $h+k\leq d+r-1$ and hence $Y^{h+k} + P_\alpha(Y)U \in \sum_{i=0}^{d-i}y_i\KK$ by the inductive hypothesis. Therefore, 
\[
y_j = \sum_{h=0}^{d-1}\sum_{k=0}^{r}\binom{r}{k} \left(Y^{h+k} + P_\alpha(Y)U\right)\partial^{r-k}\left(c_h'\right) \in \sum_{i=0}^{d-i}y_i\KK
\]
as claimed. By induction, we conclude that $\psi$ is surjective and hence an isomorphism. This allows us to conclude that $M$ is a differential $\KK$-vector space as claimed and the differential is exactly the linear endomorphism induced by right multiplication by $Y$. 

Concerning the last claim, it should be clear now that, for every $n \geq 0$, we have that 
\[
\tilde{f}_\alpha\left(\partial_M^n\left(\cl{1}\right)\right) = \tilde{f}_\alpha\left(y_n\right) = f_{\alpha}(Y^n) = \alpha(n). \qedhere
\]
\end{proof}

As a consequence of Propositions \ref{prop:big1} and \ref{prop:big2} we can prove the main theorem of this section, that allows us to describe the finite dual of a skew polynomial algebra as the space of linear funcionals vanishing on a finite-codimensional ideal and to relate it with differentially recursive sequences.

\begin{theorem}\label{thm:big}
The isomorphism $\Phi$ of Proposition \ref{prop:iso} induces an isomorphism of $\left(\KK\otimes\KK\right)$-algebras $\phi:\Circ{U}\to \dlin{\KK}$ such that $\Phi\circ \zeta = \phi$. It is explicitly given by $\phi\left(\overline{\varphi\tensor{T_M}m}\right) = \left(\varphi\left(\partial_M^\bullet(m)\right)\right)$ with inverse $\phi^{-1}(\alpha) = \overline{\tilde{f}_\alpha\tensor{T_{M_\alpha}}\cl{1}}$, for all $\overline{\varphi\tensor{T_M}m} \in \Circ{U}$ and all $\alpha\in\dlin{\KK}$. It is also an isomorphism as differential $\KK$-algebras.
\end{theorem}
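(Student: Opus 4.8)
The plan is to realise $\phi$ directly as the composite $\Phi\circ\zeta$ and to harvest all the substantial work from Propositions \ref{prop:big1} and \ref{prop:big2}, so that what remains is essentially an assembly argument. First I would make the action of the reconstruction map $\zeta$ from \cite{LaiachiGomez} explicit on homogeneous generators: the functional $\zeta\left(\overline{\varphi\tensor{T_M}m}\right)\in U^*$ sends $Y^n$ to $\varphi\left(m\triangleleft Y^n\right) = \varphi\left(\partial_M^n(m)\right)$, because the right $U$-action on $M$ is given by $m\triangleleft Y = \partial_M(m)$. Composing with $\Phi$ of Proposition \ref{prop:iso}, which reads off the values on the $Y^\bullet$, yields at once the explicit formula $\phi\left(\overline{\varphi\tensor{T_M}m}\right) = \left(\varphi\left(\partial_M^\bullet(m)\right)\right)$ together with the asserted identity $\Phi\circ\zeta = \phi$. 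Since both $\Phi$ and $\zeta$ are morphisms of $\left(\KK\otimes\KK\right)$-algebras, so is $\phi$; well-definedness on equivalence classes is inherited from that of $\zeta$.

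Next I would establish that $\phi$ corestricts to a bijection onto $\dlin{\KK}$. That the image lands inside $\dlin{\KK}$ is exactly Proposition \ref{prop:big1}, which guarantees that each sequence $\left(\varphi\left(\partial_M^\bullet(m)\right)\right)$ is differentially recursive; this makes $\phi:\Circ{U}\to\dlin{\KK}$ well defined. Injectivity is immediate, since $\zeta$ is the canonical injection recalled just before Proposition \ref{prop:big1} and $\Phi$ is bijective, whence $\phi=\Phi\circ\zeta$ is injective. For surjectivity and the inverse formula I would invoke Proposition \ref{prop:big2}: given $\alpha\in\dlin{\KK}$, the element $\overline{\tilde{f}_\alpha\tensor{T_{M_\alpha}}\cl{1}}$ is a well-defined element of $\Circ{U}$ and, by \eqref{eq:ftilde}, $\phi\left(\overline{\tilde{f}_\alpha\tensor{T_{M_\alpha}}\cl{1}}\right) = \left(\tilde{f}_\alpha\left(\partial_{M_\alpha}^\bullet(\cl{1})\right)\right) = \alpha$. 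Hence the assignment $\alpha\mapsto\overline{\tilde{f}_\alpha\tensor{T_{M_\alpha}}\cl{1}}$ is a right inverse to $\phi$; combined with injectivity this makes $\phi$ bijective with two-sided inverse $\phi^{-1}(\alpha) = \overline{\tilde{f}_\alpha\tensor{T_{M_\alpha}}\cl{1}}$. As a by-product, since $\phi$ is an algebra map whose image is precisely $\dlin{\KK}$, the latter is automatically a subalgebra of $\cH(\KK)$, and $\phi$ is an isomorphism of $\left(\KK\otimes\KK\right)$-algebras.

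Finally, to upgrade this to an isomorphism of differential $\KK$-algebras, I would check that $\phi$ intertwines the derivation $\partial_\circ$ of $\Circ{U}$ from \eqref{eq:diffcirc} with the shift $\cN$ on $\dlin{\KK}$. Using again $m\triangleleft Y = \partial_M(m)$, the $n$-th entry of $\phi\left(\partial_\circ\left(\overline{\varphi\tensor{T_M}m}\right)\right)$ is $\varphi\left(\partial_M^{n+1}(m)\right)$, so that
\[
\phi\left(\partial_\circ\left(\overline{\varphi\tensor{T_M}m}\right)\right) = \phi\left(\overline{\varphi\tensor{T_M}(m\triangleleft Y)}\right) = \left(\varphi\left(\partial_M^{\bullet+1}(m)\right)\right) = \cN\left(\phi\left(\overline{\varphi\tensor{T_M}m}\right)\right),
\]
that is, $\phi\circ\partial_\circ = \cN\circ\phi$. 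Since $\phi$ already respects the source maps (being a $\left(\KK\otimes\KK\right)$-algebra morphism forces $\phi\circ s_\circ = s$) and is an algebra isomorphism, intertwining the two derivations promotes it to an isomorphism of differential $\KK$-algebras.

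I expect no serious obstacle: the two preparatory propositions have already carried out the analytic work of matching the two descriptions of a differentially recursive sequence, so the theorem reduces to bookkeeping. The only points requiring genuine care are pinning down the explicit form of $\zeta$ on homogeneous generators (needed both for the formula for $\phi$ and for the differential compatibility) and verifying the single intertwining identity above.
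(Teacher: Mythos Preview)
Your proposal is correct and follows essentially the same approach as the paper's own proof: define $\phi=\Phi\circ\zeta$, use Proposition \ref{prop:big1} to see that it lands in $\dlin{\KK}$, invoke the injectivity of $\zeta$ for injectivity, and use Proposition \ref{prop:big2} together with \eqref{eq:ftilde} to produce the section $\alpha\mapsto\overline{\tilde{f}_\alpha\tensor{T_{M_\alpha}}\cl{1}}$ and hence surjectivity. Your additional explicit verification that $\phi\circ\partial_\circ=\cN\circ\phi$ is a welcome detail that the paper leaves implicit.
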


\begin{proof}
It follows from Proposition \ref{prop:big1} that $\Phi\circ \zeta$ lands into $\dlin{\KK}$, thus inducing the claimed morphism $\phi$. Being $\zeta: \Circ{U} \to U^*$ injective, $\phi$ has to be injective. From Proposition \ref{prop:big2} it follows that the assignment $\phi^{-1}:\dlin{\KK}\to\Circ{U},\, \alpha\mapsto \overline{\tilde{f}_\alpha\tensor{T_{M_\alpha} }\cl{1}},$ is a section of $\phi$ (\ie $\phi\circ\phi^{-1} = \id_{\dlin{\KK}}$), whence $\phi$ is surjective as well and hence an isomorphism.
\end{proof}

\begin{corollary}\label{cor:yes}
Up to the canonical morphism $\zeta$, $\Circ{U}$ can be identified with the $\left(\KK\otimes\KK\right)$-subalgebra of $U^*$ of all those linear functionals vanishing on a finite-codimensional (principal) right ideal.
\end{corollary}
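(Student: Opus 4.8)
The plan is to exploit the chain of identifications already at our disposal rather than to argue about $U^*$ from scratch. First I would recall, as noted just before Theorem \ref{thm:big}, that the canonical map $\zeta:\Circ{U}\to U^*$ is injective (\cite[Corollary 3.3.6]{LaiachiGomez}), so that $\Circ{U}$ is identified, as a $\left(\KK\otimes\KK\right)$-algebra, with its image $\zeta\left(\Circ{U}\right)\subseteq U^*$. The task then reduces to describing this image concretely.

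Next I would invoke Theorem \ref{thm:big}, which furnishes the identity $\Phi\circ\zeta=\phi$ with $\phi:\Circ{U}\to\dlin{\KK}$ an isomorphism and $\Phi:U^*\to\cH(\KK)$ the isomorphism of Proposition \ref{prop:iso}. Since $\Phi$ is bijective, applying $\Phi^{-1}$ gives $\zeta\left(\Circ{U}\right)=\Phi^{-1}\left(\dlin{\KK}\right)$, so it suffices to describe $\Phi^{-1}\left(\dlin{\KK}\right)$ inside $U^*$. This is exactly what Proposition \ref{prop:vanish} supplies: recalling that $\dlin{\KK}=\bigcup_{\cL}\ker{\cL}$ and that $\Phi(f)\in\ker{\cL}$ for $\cL=\sum_{i=0}^dt(c_i)\cN^i$ precisely when $\ker{f}\supseteq P(Y)U$ with $P(Y)=\sum_{i=0}^dY^ic_i$, we conclude that $\Phi^{-1}\left(\dlin{\KK}\right)$ is the set of $f\in U^*$ whose kernel contains a principal right ideal of $U$.

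It then remains to upgrade ``principal'' to ``finite-codimensional principal'', and this is the step where the field hypothesis enters (cf. Remark \ref{rem:Varios}). Given $f$ vanishing on $P(Y)U$ with $P(Y)=\sum_{i=0}^dY^ic_i$ and $c_d\neq 0$, I would normalise the generator to a monic one by right-multiplying by $c_d^{-1}$: the polynomial $\tilde P(Y)\coloneqq P(Y)c_d^{-1}=\sum_{i=0}^dY^ic_ic_d^{-1}$ is monic of degree $d$, and $\tilde P(Y)U=P(Y)U$ since the two generators differ by the unit $c_d\in\KK^\times$. Then, exactly as in the proof of Proposition \ref{prop:big2}, the classes of $1,Y,\ldots,Y^{d-1}$ form a $\KK$-basis of $U/\tilde P(Y)U$, so $P(Y)U$ is finite-codimensional of codimension $d$. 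Conversely, a functional vanishing on a finite-codimensional principal right ideal trivially vanishes on a principal right ideal, so the two descriptions of $\Phi^{-1}\left(\dlin{\KK}\right)$ coincide. Combined with $\zeta\left(\Circ{U}\right)=\Phi^{-1}\left(\dlin{\KK}\right)$, this yields the claim, the $\left(\KK\otimes\KK\right)$-subalgebra structure being automatic because $\zeta$ is an injective morphism of $\left(\KK\otimes\KK\right)$-algebras.

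The only genuinely delicate point I expect is the monic normalisation for \emph{right} ideals: one must multiply $P(Y)$ by $c_d^{-1}$ \emph{on the right}, not on the left, since a left multiplication would scramble the leading coefficient through the commutation relations \eqref{eq:useful} and need not preserve the ideal $P(Y)U$. Doing it on the right preserves the ideal while rendering the generator monic, and it is precisely here that invertibility of the leading coefficient—hence the field hypothesis—is used to guarantee finite codimension.
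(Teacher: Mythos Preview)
Your proposal is correct and follows exactly the route the paper intends: the corollary is stated without proof, as an immediate consequence of Theorem \ref{thm:big} combined with Proposition \ref{prop:vanish} and Proposition \ref{prop:big2}, and you have simply spelled out those implicit steps. Your care about normalising on the right rather than the left is a valid observation, though the paper already absorbs this into Proposition \ref{prop:big2} (where finite-codimensionality of $P_\alpha(Y)U$ is established assuming $c_d\neq 0$, hence invertible).
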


\begin{remark}\label{rem:Ulin}
Something more general than \eqref{eq:ftilde} can be said, in light of Theorem \ref{thm:big}. In fact, observe that $\Circ{U}$ is a right $U$-module with action given by
\[
\big(\overline{\varphi \tensor{T_M} m}\big)\triangleleft u = \overline{\varphi \tensor{T_M} \left(m\triangleleft u\right)}
\]
for all $\overline{\varphi \tensor{T_M} m} \in \Circ{U}$, $u\in U$, and, as such, it is an $U$-submodule of $U^*$. Therefore, $\dlin{\KK}$ inherits a structure of right $U$-module such that $\phi$ is $U$-linear and, in particular, it becomes a right $U$-submodule of $\cH(\KK)$. Therefore, for every $i\geq 0$ we have
\begin{equation}\label{eq:suspect}
\left(\tilde{f}_\alpha\left(\partial_{M_\alpha}^\bullet\left(y_i\right)\right)\right) = \phi\left(\overline{\tilde{f}_\alpha\tensor{T_{M_\alpha}}y_i}\right) = \phi\left(\overline{\tilde{f}_\alpha\tensor{T_{M_\alpha}}1}\triangleleft Y^i\right) = \alpha\triangleleft Y^i = \cN^i(\alpha).
\end{equation}
\end{remark}


As a consequence of Theorem \ref{thm:big} we can finally state the central result of the paper.

\begin{theorem}\label{thm:chalgd}
The $\K$-algebra $\dlin{\KK}$ of differentially recursive sequences enjoys a structure of commutative Hopf algebroid over $\KK$. The structure maps are explicitly given by the source $s$, the target $t$, the counit $\varepsilon:\dlin{\KK} \to \KK, \alpha\mapsto \alpha(0)$, the comultiplication
\begin{equation}\label{eq:comult}
\Delta:\dlin{\KK} \longrightarrow  \dlin{\KK}\tensor{\KK}\dlin{\KK}, \qquad \alpha\longmapsto \sum_{i=0}^{d-1} \cN^i\left(\alpha\right) \tensor{\KK} \left(y_i^*\left(y_\bullet\right)\right),
\end{equation}
where $\left\{y_i = Y^i + P_\alpha(Y)U,y_i^* \, \right\}_{i=0,\, \ldots,\, d-1}$ is the dual basis of $M_\alpha$ and $d$ is the degree of $\alpha$. Lastly, the antipode is given by 
\[
\sS:\dlin{\KK}\longrightarrow \dlin{\KK}, \qquad \alpha \longmapsto \left(\sum_{k=0}^n\binom{n}{k}(-1)^{n-k}\partial^k\left(\alpha(n-k)\right)\right)_{n\geq 0}.
\]
\end{theorem}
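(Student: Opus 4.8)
The plan is to transport the commutative Hopf algebroid structure of $\Circ{U}$ recalled in Remark \ref{rem:hopfalgd} along the isomorphism $\phi:\Circ{U}\to\dlin{\KK}$ of Theorem \ref{thm:big}, and then to identify the transported structure maps with the explicit formulas in the statement. Because $\phi$ is an isomorphism of $\left(\KK\otimes\KK\right)$-algebras, setting $s=\phi\circ s_\circ$, $t=\phi\circ t_\circ$, $\varepsilon=\varepsilon_\circ\circ\phi^{-1}$, $\sS=\phi\circ S_\circ\circ\phi^{-1}$ and $\Delta=\left(\phi\tensor{\KK}\phi\right)\circ\Delta_\circ\circ\phi^{-1}$ yields a commutative Hopf algebroid on $\dlin{\KK}$ with base $\KK$ with \emph{no} axiom left to check: coassociativity, counitality and the antipode identities are just the corresponding commutative diagrams for $\Circ{U}$ conjugated by an isomorphism. (Here $\phi\tensor{\KK}\phi$ is well-defined once we confirm $\phi\circ s_\circ=s$ and $\phi\circ t_\circ=t$ below, so that $\phi$ is a map of $\KK$-bimodules.) Hence the entire content of the proof is the computation of the five transported maps via the formula $\phi\left(\overline{\varphi\tensor{T_M}m}\right)=\left(\varphi\left(\partial_M^\bullet(m)\right)\right)$ and its inverse $\phi^{-1}(\alpha)=\overline{\tilde{f}_\alpha\tensor{T_{M_\alpha}}\cl{1}}$ from Theorem \ref{thm:big}.

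For source and target I would take $M=\KK$ with differential $\partial$ and compute $\phi\left(\overline{k\tensor{\K}1}\right)=\left(k\,\partial^\bullet(1)\right)=(k,0,0,\ldots)=s(k)$ and $\phi\left(\overline{1\tensor{\K}k}\right)=\left(\partial^\bullet(k)\right)=t(k)$, using $\partial^n(1)=\delta_{n,0}$; this also confirms the compatibility needed in the first paragraph. For the counit, evaluating $\phi\left(\overline{\varphi\tensor{T_M}m}\right)=\left(\varphi\left(\partial_M^\bullet(m)\right)\right)$ at $0$ returns $\varphi\left(\partial_M^0(m)\right)=\varphi(m)=\varepsilon_\circ\left(\overline{\varphi\tensor{T_M}m}\right)$, so the transported counit is indeed $\alpha\mapsto\alpha(0)$.

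For the comultiplication, I would apply $\Delta_\circ$ to $\phi^{-1}(\alpha)=\overline{\tilde{f}_\alpha\tensor{T_{M_\alpha}}\cl{1}}$ using the dual basis $\left\{y_i,y_i^*\right\}_{0\leq i\leq d-1}$ of $M_\alpha$, where $d$ is the degree of $\alpha$ and $\dim_\KK M_\alpha=d$ by Proposition \ref{prop:big2}. This gives $\sum_{i=0}^{d-1}\overline{\tilde{f}_\alpha\tensor{T_{M_\alpha}}y_i}\tensor{\KK}\overline{y_i^*\tensor{T_{M_\alpha}}\cl{1}}$, and applying $\phi\tensor{\KK}\phi$ one evaluates each tensorand: the first equals $\cN^i(\alpha)$ by \eqref{eq:suspect}, while the second is $\left(y_i^*\left(\partial_{M_\alpha}^\bullet(\cl{1})\right)\right)=\left(y_i^*(y_\bullet)\right)$ since $\partial_{M_\alpha}^n(\cl{1})=y_n$. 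This reproduces exactly the comultiplication \eqref{eq:comult}.

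The main obstacle is the antipode, since this is where the differential on a \emph{dual} module must be made explicit. One has $\sS(\alpha)=\phi\left(\overline{\mathsf{ev}_{\cl{1}}\tensor{T_{M_\alpha^*}}\tilde{f}_\alpha}\right)=\left(\mathsf{ev}_{\cl{1}}\left(\partial_{M_\alpha^*}^\bullet\left(\tilde{f}_\alpha\right)\right)\right)$, so the key is to understand $\partial_{M^*}$. From the requirement that evaluation $M^*\tensor{\KK}M\to\KK$ be a morphism of differential modules in the rigid monoidal category of \cite{LaiachiGomez}, one reads off $(\partial_{M^*}\varphi)(m)=\partial\left(\varphi(m)\right)-\varphi\left(\partial_M(m)\right)$, that is $\partial_{M^*}\varphi=\partial\circ\varphi-\varphi\circ\partial_M$ inside the $\K$-vector space $\h_{\K}(M,\KK)$. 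The two operators $\varphi\mapsto\partial\circ\varphi$ and $\varphi\mapsto\varphi\circ\partial_M$ commute (post- versus pre-composition), so the binomial theorem yields $\partial_{M^*}^n\varphi=\sum_{k=0}^n\binom{n}{k}(-1)^{n-k}\,\partial^k\circ\varphi\circ\partial_M^{n-k}$. Evaluating at $\cl{1}$, and using $\partial_{M_\alpha}^{n-k}(\cl{1})=y_{n-k}$ together with $\tilde{f}_\alpha(y_{n-k})=\alpha(n-k)$ from \eqref{eq:ftilde}, gives precisely $\sum_{k=0}^n\binom{n}{k}(-1)^{n-k}\partial^k\left(\alpha(n-k)\right)$, the claimed antipode. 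The only delicate points are to verify that $\partial_{M_\alpha^*}$ entering $S_\circ$ coincides with this $\partial_{M^*}$ and that $\mathsf{ev}_{\cl{1}}$ is the element of $M_\alpha^{**}\cong M_\alpha$ corresponding to $\cl 1$; both follow from the rigidity of the category in \cite{LaiachiGomez}. Finally, I would note that $\phi$ is already an isomorphism of differential $\KK$-algebras by Theorem \ref{thm:big}, which guarantees compatibility of $\partial_\circ$ in \eqref{eq:diffcirc} with $\cN$ and closes the argument.
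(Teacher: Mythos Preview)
Your proposal is correct and follows exactly the approach of the paper: the paper's own proof of this theorem is the single sentence ``The structures come from those of $\Circ{U}$ via $\phi$ of Theorem \ref{thm:big}.'' You have simply made explicit the computations of the transported source, target, counit, comultiplication, and antipode that the paper leaves implicit, and your verifications (including the binomial expansion of $\partial_{M^*}^n$ via the commuting operators $\varphi\mapsto\partial\circ\varphi$ and $\varphi\mapsto\varphi\circ\partial_M$) are all valid.
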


\begin{proof}
The structures come from those of $\Circ{U}$ via $\phi$ of Theorem \ref{thm:big}.
\end{proof}

\begin{remark}
We know, for abstract reasons, that if $\alpha,\beta\in\dlin{\KK}$, then $\alpha\cdot\beta\in\dlin{\KK}$ as well. However, before proceeding, the reader may be interested in knowing explicitly which differential operator the product of two differentially recursive sequences satisfies\footnote{Apart from its own interest, this computation could perhaps have a certain interest form a combinatorial point of view.}.

To this aim, assume that $\cL_\alpha(\alpha) = 0 = \cL_\beta(\beta)$ with $\cL_\alpha \coloneqq \sum_{i=0}^{d_\alpha} t(a_i)\cN^i$ and $\cL_\beta \coloneqq \sum_{i=0}^{d_\beta} t(b_i)\cN^i$, $a_i,b_j\in\KK$. In light of Theorem \ref{thm:big},
\[
\alpha \cdot \beta = \phi\left( \phi^{-1}(\alpha) \cdot \phi^{-1}(\beta) \right) = \phi\left( \left(\overline{\tilde{f}_{\alpha} \tensor{T_{M_{\alpha}}} \cl{1}}\right) \cdot \left(\overline{\tilde{f}_{\beta} \tensor{T_{M_{\beta}}} \cl{1}}\right) \right) \stackrel{\eqref{eq:multcirc}}{=} \phi\left(\overline{ \left(\tilde{f}_\beta \diamond \tilde{f}_\alpha\right) \tensor{T_{M_\beta\tensor{\KK}M_\alpha}} \left(\cl{1}\tensor{\KK}\cl{1}\right) }\right).
\]
By Proposition \ref{prop:big1}, $\alpha\cdot \beta$ is annihilated by an operator $\cL_{\alpha\cdot\beta}$ of order $d$ such that
\[
0\leq d\leq \dim_{\KK}\left(M_\beta\tensor{\KK}M_\alpha\right) = d_\alpha d_\beta
\]
(see Proposition \ref{prop:big2} as well). Therefore, we may assume $\cL_{\alpha\cdot \beta} = \sum_{i=0}^{d_\alpha d_\beta}t(x_i)\cN^i$ for some (not necessarily non-zero) $x_i\in \KK$, $i=0,\ldots,d_\alpha d_\beta$. Consider the (non-linear) system of equations
\begin{equation}\label{eq:supersyst}
 0 = \cL_{\alpha\cdot \beta}\left(\alpha\cdot \beta\right)(n) = \sum_{i=0}^{d_{\alpha}d_{\beta}}\sum_{k=0}^{n}\binom{n}{k}\partial^k\left(x_i\right)\left(\alpha\cdot\beta\right)(n-k+i), \qquad 0\leq n\leq d_\alpha d_\beta-1,
\end{equation}
in the $d_{\alpha}d_{\beta}+1$ unknowns $x_0,\ldots,x_{d_{\alpha}d_{\beta}}$. By Proposition \ref{propApp:supersysts}, this is equivalent to the homogeneous (linear) system
\begin{equation}\label{eq:supersyst2}
\sum_{i=0}^{d_{\alpha}d_{\beta}}x_i\left(\sum_{k=0}^n\binom{n}{k}(-1)^k\partial^{k}\left((\alpha\cdot \beta)(n-k+i)\right)\right) = 0, \qquad 0\leq n\leq d_{\alpha}d_{\beta}-1,
\end{equation}
which necessarily admits a non-zero solution (recall that $\alpha\cdot\beta\in\dlin{\KK}$, whence $\cL_{\alpha\cdot\beta}$ exists and it satisfies \eqref{eq:supersyst}). Conversely, in light of Lemma \ref{lemmaApp:vanish}, any solution of \eqref{eq:supersyst2} (and hence of \eqref{eq:supersyst}) gives rise to an operator $\cL'$ which annihilates $\alpha\cdot \beta$. Therefore, in order to find an operator $\cL_{\alpha\cdot \beta}$ such that $0 = \cL_{\alpha\cdot \beta}(\alpha\cdot \beta)$ it is enough to solve \eqref{eq:supersyst2}.
\end{remark}

Let us show the procedure in detail on a concrete and easy handled example.

\begin{example}\label{eq:super1}
Assume that $0=\alpha\triangleleft (Y-p)$, $0=\beta\triangleleft(Y-q)$, $\alpha(0) \neq 0$ and $\beta(0) \neq 0$. Then  the recursions are 
\[
\alpha(n+1) = \sum_{k=0}^n\binom{n}{k}\partial^k\left(p\right)\alpha(n-k) \qquad \text{and} \qquad \beta(n+1) = \sum_{k=0}^n\binom{n}{k}\partial^k\left(q\right)\beta(n-k)
\]
for all $n\geq 0$. In particular, one may check directly that
\begin{align*}
\alpha(1) & = p\alpha(0) & \beta(1) & = q\beta(0) \\
\alpha(2) & = \left(p^2+\partial(p)\right)\alpha(0) & \beta(2) & = \left(q^2+\partial(q)\right)\beta(0)
\end{align*}
by iterative substitution (we will come back on these computations with more detail in \S\ref{sec:matrixdiffeq}). Now, consider the relation $\left(\alpha\cdot \beta\right)\triangleleft\left(Ya+b\right)=0$. For $n=0$, this gives rise to the equation
\[
a\left(\alpha(1)\beta(0)+\alpha(0)\beta(1)\right) + b\alpha(0)\beta(0) = 0
\]
which, after substituting and cancelling $\alpha(0)\beta(0)$, becomes
\[
a\left(p+q\right) + b = 0.
\]
For $n=1$ it gives rise to
\[
\begin{split}
a\left(\alpha(2)\beta(0)+2\alpha(1)\beta(1)+\alpha(0)\beta(2)\right) + \left(\partial(a)+b\right)\left(\alpha(1)\beta(0)+\alpha(0)\beta(1)\right) + \partial(b)\alpha(0)\beta(0) = 0,
\end{split}
\]
which, after substituting and cancelling $\alpha(0)\beta(0)$, becomes
\[
\begin{split}
a\left(p^2+\partial(p)+2pq+q^2+\partial(q)\right) + \left(\partial(a)+b\right)\left(p+q\right) + \partial(b) = 0.
\end{split}
\]
Observe that, since $b= - a\left(p+q\right)$, we get
\begin{gather*}
a\left(p^2+\partial(p)+2pq+q^2+\partial(q)\right) + \partial(a)\left(p+q\right) + \\
-a\left(p^2+2pq+q^2\right) - \partial(a)\left(p+q\right) - a(\partial(p)+\partial(q)) = 0,
\end{gather*}
that is to say, the second equation is identically satisfied and $Ya+b$ is determined by any solution of
\[
a\left(p+q\right) + b = 0,
\]
as we were expecting. The easiest one is, of course, $a=1$ and $b = -(p+q)$, \ie $P_{\alpha\cdot\beta}(Y) = Y -(p+q)$. 
\end{example}

Let us devote an additional bit of time to see a second, more meaningful, example with a bit less of details.

\begin{example}\label{eq:super2}
Assume that $0=\alpha\triangleleft (Y-p)$, $0=\beta\triangleleft(Y^2-Yq_1-q_0)$, $\alpha(0) = a \neq 0$ and $(\beta(0),\beta(1)) = (b_0,b_1) \neq (0,0)$. Assume also that $a=1$. As in Example \ref{eq:super1}, this is not restrictive. Then the recursions are 
\[
\alpha(n+1) = \sum_{k=0}^n\binom{n}{k}\partial^k\left(p\right)\alpha(n-k) \qquad \text{and} \qquad \beta(n+2) = \sum_{k=0}^n\binom{n}{k}\partial^k\left(q_1\right)\beta(n-k+1) + \sum_{k=0}^n\binom{n}{k}\partial^k\left(q_0\right)\beta(n-k)
\]
for all $n\geq 0$. In particular, one may check directly that
\begin{align*}
\alpha(0) & = 1, & \beta(0) & = b_0 \\
\alpha(1) & = p, & \beta(1) & = b_1 \\
\alpha(2) & = p^2+\partial(p), & \beta(2) & = q_1b_1 + q_0b_0 \\
\alpha(3) & = p^3+3p\partial(p)+\partial^2(p), & \beta(3) & = \left(q_1^2+\partial(q_1)+q_0\right)b_0 + \left(q_1q_0 + \partial(q_0)\right)b_1 \\
\alpha(4) & = p^4 + 6 p^2 \partial(p) + 4p\partial^2(p) + 3(\partial(p))^2 + \partial^3(p), & \beta(4) & = \left(q_1^3+3q_1\partial(q_1) + 2q_0q_1 + \partial^2(q_1) + 2\partial(q_0)\right)b_1 + \\ 
 & & & \phantom{=} + \left(q_1^2q_0 + q_1\partial(q_0) + 2q_0\partial(q_1) + q_0^2 +\partial^2(q_0)\right)b_0
\end{align*}
by iterative substitution as before. Now, consider the relation $\left(\alpha\cdot \beta\right)\triangleleft\left(Y^2a+Yb+c\right)=0$. For $n=0$, this gives rise to the equation
\begin{equation}\label{eq:n0}
\left((p^2+\partial(p)+q_0)b_0 + (2p+q_1)b_1\right)a + \left(pb_0+b_1\right)b + b_0c = 0.
\end{equation}
For $n=1$ it gives rise to
\begin{equation}\label{eq:n1}
\begin{gathered}
\left((p^3+3p\partial(p)+\partial^2(p) +3pq_0 +q_1q_0 + \partial(q_0))b_0 + (3p^2+3\partial(p) + 3pq_1 + q_1^2+\partial(q_1) + q_0) b_1\right)a + \\ 
+ \left((p^2+\partial(p)+q_0)b_0+(2p+q_1)b_1\right)\left(\partial(a) + b\right) + \left(pb_0+b_1\right)(c+\partial(b)) + b_0\partial(c) = 0.
\end{gathered}
\end{equation}
For the sake of brevity, we omit here the expression for $n=2$. Now, a priori this situation gives rise to three different possibilities.
\begin{enumerate}
\item $b_0=0$ and we may consequently assume $b_1=1$. In this case, \eqref{eq:n0} gives $b = -(2p+q_1)a$. If we substitute it in \eqref{eq:n1} then we get $c = (p^2+pq_1-q_0-\partial(p))a$. A straightforward but tedious computation confirms that the third equation is identically satisfied and hence
\begin{equation}\label{eq:1sol}
P_{\alpha\cdot\beta}(Y) = Y^2 + Y(2p+q_1) - \left(p^2+pq_1-q_0-\partial(p)\right).
\end{equation}
\end{enumerate}
Having ruled out the case $b_0=0$, we may assume henceforth that $b_0=1$ and substitute $b_1$ with $w:=b_1/b_0$. Under these hypothesis, \eqref{eq:n0} gives
\begin{equation}\label{eq:zRiccati}
c = -\left(p^2+\partial(p)+q_0+(2p+q_1)w\right)a -(p+w)b.
\end{equation}
\begin{enumerate}[resume*]
\item $w$ satisfies the (generalized) Riccati equation $\partial(w) = q_0+wq_1-w^2$ (see \cite[\S I.1]{Reid}). In this case, \eqref{eq:n1} turns out to be automatically satisfied and hence $P_{\alpha\cdot\beta}$ is determined simply by \eqref{eq:zRiccati}. Therefore
\[
P_{\alpha\cdot\beta} (Y) = Y - (p+w).
\]
This is a case in which the order of the product is strictly smaller than the product of the orders.
\item $w$ does not satisfy the (generalized) Riccati equation, \ie $\partial(w) \neq q_0+wq_1-w^2$. In this case, by substituting \eqref{eq:zRiccati} into \eqref{eq:n1} we find out that $b = - (2p+q_1)a$ and that the third equation is identically satisfied (as expected). Therefore,
\[
P_{\alpha\cdot\beta}(Y) = Y^2 + Y(2p+q_1) - \left(p^2 + pq_1-q_0-\partial(p)\right),
\]
which coincides with \eqref{eq:1sol}.
\end{enumerate}
\end{example}

Back to the main topic, Corollary \ref{cor:yes} gives a description of $\Circ{U}$ that closely resembles the classical one: for an ordinary Hopf $\K$-algebra $H$, $\Circ{H}$ is the subalgebra of $H^*$ of all those linear functionals that vanishes on a finite-codimensional (two-sided) ideal. As a consequence, a very natural question arises. Recall from \cite[\S4]{ArdiLaiachiPaolo} that there exists a second finite dual construction for cocommutative Hopf algebroids, obtained via the Special Adjoint Functor Theorem. This alternative finite dual $\saft{U}$ is, in a suitable sense, the biggest $\KK$-coring inside $U^*$. Namely, it is uniquely determined by the following universal property: $\saft{U}$ is a $\KK$-coring together with a $\K$-linear map $\xi:\saft{U}\to U^*$ which satisfies
\begin{equation}\label{eq:saft}
\xi(z)(uv) = \sum \xi(z_{(1)})\left(\tau\left(\xi\left(z_{(2)}\right)(u)\right)v\right), \quad \xi(z)(1_U) = \varepsilon(z)\quad \text{and} \quad \xi(x\cdot z \cdot y)(u) = x\xi(z)(\tau(y)u)
\end{equation}
for all $z\in \saft{U}$, $u,v\in U$ and $x,y\in\KK$. It is universal with respect to this property, in the sense that if $C$ is another $\KK$-coring with a $\K$-linear morphism $f:C\to U^*$ satisfying \eqref{eq:saft} then there exists a unique coring homomorphism $\what{f} : C\to \saft{U}$ such that $\xi\circ\what{f} = f$. In light of this, it is natural to ask how $\Circ{U}$ (and $\dlin{\KK}$) are related with $\saft{U}$, which will give the second universal property mentioned in the Introduction.

\begin{theorem}\label{thm:bullet}
For a differential field $(\KK,\partial)$, the two-sided $\KK$-vector space of differentially recursive sequences $\dlin{\KK}$ with the inclusion $\xi:\dlin{\KK}\subseteq U^*$, satisfies the universal property of the Hopf algebroid $\saft{U}$. In particular, for $U=\KK[Y;\partial]$,  we have a chain of isomorphisms $\Circ{U}\cong \dlin{\KK}\cong \saft{U}$ of Hopf algebroids.
\end{theorem}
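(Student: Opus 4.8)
The plan is to prove that the pair $\left(\dlin{\KK},\xi\right)$, where $\xi=\Phi^{-1}|_{\dlin{\KK}}=\zeta\circ\phi^{-1}$ is the (injective) inclusion coming from Theorem \ref{thm:big}, satisfies the very universal property that characterises $\saft{U}$; uniqueness of universal objects then yields a canonical isomorphism $\dlin{\KK}\cong\saft{U}$, and composing with Theorem \ref{thm:big} gives the chain $\Circ{U}\cong\dlin{\KK}\cong\saft{U}$. Since $\dlin{\KK}$ already carries a $\KK$-coring structure by Theorem \ref{thm:chalgd}, the first task is to verify the three identities \eqref{eq:saft} for $\xi$. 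The second is immediate, as $\xi(\alpha)(1_U)=\alpha(0)=\varepsilon(\alpha)$, and the third follows from a short computation comparing the left/right $\KK$-actions on $\cH(\KK)$ (induced by $s$ and $t$) with the multiplication rule \eqref{eq:useful} in $U$. The first identity is the delicate one: after reducing, by additivity in $u$ and right $\KK$-linearity in $v$, to monomials $u=Y^a$, $v=Y^b$, its left-hand side equals $\alpha(a+b)=\tilde{f}_\alpha(y_{a+b})$, while expanding $y_{a+b}=\partial_{M_\alpha}^b(y_a)$ through the dual-basis decomposition $y_a=\sum_i y_i\,y_i^*(y_a)$ and the Leibniz rule \eqref{eq:deract} reproduces exactly the right-hand side dictated by the comultiplication \eqref{eq:comult}. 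Through the universal property this grants a unique morphism of corings $\what{\xi}:\dlin{\KK}\to\saft{U}$ with $\xi_{\saft{U}}\circ\what{\xi}=\xi$ (where $\xi_{\saft{U}}:\saft{U}\to U^*$ is the structural map of $\saft{U}$); being $\xi$ injective, $\what{\xi}$ is injective.

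The core of the argument is the following lemma, which I would isolate: \emph{for every $\KK$-coring $C$ and every $\K$-linear $f:C\to U^*$ satisfying \eqref{eq:saft}, one has $f(C)\subseteq\Phi^{-1}\left(\dlin{\KK}\right)$}. To prove it, fix $c\in C$ and write the \emph{finite} coproduct $\Delta_C(c)=\sum_{j=1}^N c_{(1),j}\tensor{\KK}c_{(2),j}$. Specialising the first identity of \eqref{eq:saft} to $u=Y^i$ and using the action \eqref{eq:actf} gives
\[
f(c)\triangleleft Y^i=\sum_{j=1}^N f\left(c_{(1),j}\right)\triangleleft \tau\left(f\left(c_{(2),j}\right)\left(Y^i\right)\right),
\]
and since $\tau\left(f(c_{(2),j})(Y^i)\right)\in\KK$ acts on $U^*$ as a right scalar, every $f(c)\triangleleft Y^i$ lies in the right $\KK$-span of the \emph{fixed} finite set $\left\{f\left(c_{(1),j}\right)\right\}_{j=1}^N$. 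Hence $\left\{f(c)\triangleleft Y^i\mid i\geq 0\right\}$ spans a finite-dimensional right $\KK$-subspace of $U^*$; choosing a linear dependence among $f(c)\triangleleft Y^0,\ldots,f(c)\triangleleft Y^r$ (with $r$ the dimension of that span) yields a non-zero $P(Y)=\sum_{i=0}^r Y^ic_i$ with $f(c)\triangleleft P(Y)=f(c)\circ\lambda_{P(Y)}=0$, i.e. $\ker{f(c)}\supseteq P(Y)U$, a finite-codimensional right ideal. By Proposition \ref{prop:vanish}, $\Phi\left(f(c)\right)\in\dlin{\KK}$, proving the lemma.

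With the lemma in hand I would conclude as follows. Applied to $\left(\saft{U},\xi_{\saft{U}}\right)$ — which satisfies \eqref{eq:saft} by definition — it shows $\mathrm{im}\left(\xi_{\saft{U}}\right)\subseteq\mathrm{im}(\xi)=\Phi^{-1}\left(\dlin{\KK}\right)$. Since $\xi_{\saft{U}}$ is injective (see \cite{ArdiLaiachiPaolo}) and $\xi_{\saft{U}}\circ\what{\xi}=\xi$, for any $w\in\saft{U}$ we may write $\xi_{\saft{U}}(w)=\xi(\alpha)=\xi_{\saft{U}}\left(\what{\xi}(\alpha)\right)$ for a suitable $\alpha\in\dlin{\KK}$, whence $w=\what{\xi}(\alpha)$; thus $\what{\xi}$ is surjective, hence a coring isomorphism. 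Finally, because both $\xi$ and $\xi_{\saft{U}}$ are injective morphisms of $\left(\KK\otimes\KK\right)$-algebras (the former by Proposition \ref{prop:iso}, the latter by \cite{ArdiLaiachiPaolo}) and $\xi_{\saft{U}}\circ\what{\xi}=\xi$, the bijection $\what{\xi}$ automatically respects the multiplications, so it is an isomorphism of commutative bialgebroids; the antipodes are then intertwined by their uniqueness, making $\what{\xi}$ an isomorphism of Hopf algebroids. Together with Theorem \ref{thm:big} this delivers the asserted chain $\Circ{U}\cong\dlin{\KK}\cong\saft{U}$.

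I expect the main obstacle to be the lemma of the second paragraph — extracting genuine finite-codimensionality from the purely formal data of a coring satisfying \eqref{eq:saft}. The two features making it work are that the spanning set $\left\{f\left(c_{(1),j}\right)\right\}_{j}$ is finite (finiteness of the coproduct) and, crucially, that the orbit of $f(c)$ under $\triangleleft Y$ stays inside its right $\KK$-span, so that the resulting dependence relation packages into a \emph{principal} right ideal $P(Y)U$ to which Proposition \ref{prop:vanish} applies. A secondary, more computational, difficulty is the verification of the first identity in \eqref{eq:saft} for $\xi$, where one must match the Leibniz expansion \eqref{eq:deract} of $y_{a+b}$ against the comultiplication \eqref{eq:comult}.
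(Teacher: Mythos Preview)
Your proposal is correct, and both the overall strategy and the key lemma differ in interesting ways from the paper's proof.

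For the lemma (showing that every $f(c)$ vanishes on a principal right ideal), the paper works with the \emph{second} tensorands: it observes that each $g(c_j'')\in U^*$ has kernel of right $\KK$-codimension at most one, so the finite intersection $\bigcap_j\ker{g(c_j'')}$ still contains a non-zero $P_c(Y)$, and then the first identity of \eqref{eq:saft} gives $g(c)(P_c(Y)U)=0$. You instead work with the \emph{first} tensorands: you show that the $Y$-orbit $\{f(c)\triangleleft Y^i\}_{i\geq 0}$ stays inside the finite-dimensional right $\KK$-span of $\{f(c_{(1),j})\}_j$, and extract a linear dependence there. Both arguments exploit finiteness of the coproduct; yours has the conceptual advantage of making explicit that $f(c)$ generates a finite-dimensional right $U$-submodule of $U^*$ (the cyclic module $f(c)\triangleleft U$), which is exactly the phenomenon underlying recursiveness.

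For the global strategy, the paper verifies the universal property of $\saft{U}$ for $\dlin{\KK}$ directly: given any $(C,g)$ satisfying \eqref{eq:saft}, it shows the factorisation $g':C\to\dlin{\KK}$ is a morphism of $\KK$-corings by an explicit (and rather lengthy) check of counitality and comultiplicativity. You instead invoke the already-existing object $\saft{U}$: the universal property hands you the coring map $\what{\xi}:\dlin{\KK}\to\saft{U}$ for free, and you only need bijectivity, which you obtain by applying the lemma to $\saft{U}$ itself and using injectivity of $\xi_{\saft{U}}$. This trades the comultiplicativity computation for an external input (that $\xi_{\saft{U}}$ is injective and a $(\KK\otimes\KK)$-algebra map, from \cite{ArdiLaiachiPaolo}); the paper's approach is self-contained in this respect, but yours is shorter and avoids a tedious verification. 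Your argument that the bijection $\what{\xi}$ then upgrades to a Hopf algebroid isomorphism (multiplication via the injective algebra maps, antipode by uniqueness) is fine.
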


\begin{proof}
Assume that $C$ is a $\KK$-coring together with a $\K$-linear morphism $g:C\to U^*$ satisfying \eqref{eq:saft}. For every $c\in C$, write explicitly $\Delta(c) = \sum_{j=1}^r c_j' \tensor{\KK} c_j''$. All the morphisms $g\left(c_j''\right)$ for $j=1,\ldots,r$ admit a kernel $\ker{g\left(c_j''\right)}\subseteq U$ which is of codimension 1. In particular, since there is only a finite number of them and $U$ is infinite-dimensional, $\bigcap_{j=1}^r\ker{g\left(c_j''\right)} \ni P_c(Y)\neq 0$. In light of \eqref{eq:saft}, for every $n\geq 0$
\[
g(c)\left(P_c(Y)Y^n\right) = \sum_{j=1}^r g\left(c_j'\right)\left(\tau\left(g\left(c_j''\right)\left(P_c(Y)\right)\right)Y^n\right) = 0
\]
and hence $\ker{g(c)}\supseteq P_c(Y)U$, which is a finite-codimensional (principal) right ideal. Summing up, $g$ factors uniquely through $g':C\to\dlin{\KK}, c\mapsto g'_c$, where $\xi(g'_c)=g(c)$ for all $c\in C$ and we have the following chain of equalities
\begin{equation}\label{eq:clarify}
g(c)(Y^n) = \xi(g'_c)(Y^n) = g'_c(n) = f_{g'_c}(Y^n) = \tilde{f}_{g'_c}(y_n), \qquad \forall\,n\in \N,
\end{equation} 
where $\tilde{f}_{g'_c}\in \left(U/P_{g'_c}(Y)U\right)^*$ is the factorization through the quotient of $f_{g'_c} = \xi(g'_c)$.
Let us check that with the structure introduced in Theorem \ref{thm:chalgd} and with the canonical inclusion $\xi:\dlin{\KK}\to U^*,\alpha\mapsto f_\alpha$, as $\KK$-coring $\dlin{\KK}$ satisfies relations \eqref{eq:saft}. First of all, $\xi$ is $\KK$-bilinear because $\Phi$ of Proposition \ref{prop:iso} is.
Secondly, in light of \eqref{eq:alphaf}, $\xi(\alpha)(1_u) = \alpha\left(0\right) = \varepsilon(\alpha)$. Thirdly, we have that
\begin{gather*}
\sum_{(\alpha)}\xi\left(\alpha_{(1)}\right)\left(\xi\left(\alpha_{(2)}\right)\left(Y^k\right)Y^n\right) \stackrel{\eqref{eq:comult}}{=} \sum_{i=0}^{d-1} \xi\left(\cN^i\left(\alpha\right)\right)\left(\xi\Big(\left(y_i^*\left(y_\bullet\right)\right)\Big)\left(Y^k\right)Y^n\right) \\
 \stackrel{\eqref{eq:alphaf}}{=} \sum_{i=0}^{d-1} \xi\left(\cN^i\left(\alpha\right)\right)\left(y_i^*\left(y_k\right)Y^n\right) \stackrel{\eqref{eq:useful}}{=} \sum_{i=0}^{d-1} \xi\left(\cN^i\left(\alpha\right)\right)\left(\sum_{j=0}^n\binom{n}{j}Y^j\partial^{n-j}\left(y_i^*\left(y_k\right)\right)\right) \\
 = \sum_{i=0}^{d-1}\sum_{j=0}^n\binom{n}{j} \xi\left(\cN^i\left(\alpha\right)\right)\left(Y^j\right)\partial^{n-j}\left(y_i^*\left(y_k\right)\right) \stackrel{\eqref{eq:alphaf}}{=} \sum_{i=0}^{d-1}\sum_{j=0}^n\binom{n}{j} \cN^i(\alpha)(j)\partial^{n-j}\left(y_i^*\left(y_k\right)\right) \\
 \stackrel{\eqref{eq:suspect}}{=} \sum_{i=0}^{d-1}\sum_{j=0}^n\binom{n}{j} \tilde{f}_\alpha\left(\partial_{M_\alpha}^j\left(y_i\right)\right)\partial^{n-j}\left(y_i^*\left(y_k\right)\right) \stackrel{(*)}{=} \tilde{f}_\alpha\left(\sum_{i=0}^{d-1}\sum_{j=0}^n\binom{n}{j} \partial_{M_\alpha}^j\left(y_i\right)\partial^{n-j}\left(y_i^*\left(y_k\right)\right)\right) \\
 \stackrel{\eqref{eq:deract}}{=} \tilde{f}_\alpha\left(\partial_{M_\alpha}^n\left(\sum_{i=0}^{d-1} y_iy_i^*\left(y_k\right)\right)\right) = \tilde{f}_\alpha\left(\partial_{M_\alpha}^{n}\left(y_k\right)\right) \stackrel{\eqref{eq:suspect}}{=} \alpha(n+k) = \xi(\alpha) \left(Y^{n+k}\right)
\end{gather*}
where in $(*)$ we used the right $\KK$-linearity of $\tilde{f}_\alpha$. Therefore, $\xi$ satisfies the condition \eqref{eq:saft}. We are left to check that the morphism $g':C\to \dlin{\KK}, c\mapsto g'_c,$ where $g'_c(n)=g(c)(Y^n)$ for all $n\geq 0$, is a morphism of $\KK$-corings. Again, it is $\KK$-bilinear because $g:C\to U^*$ was and $\Phi$ is. It is counital because
\[
\varepsilon\left(g'_c\right) = g'_c(0) = g(c)(1_U) \stackrel{\eqref{eq:saft}}{=} \varepsilon_C(c).
\]
It is comultiplicative because
\begin{align*}
& \Delta\left(g'_c\right) \stackrel{\eqref{eq:comult}}{=} \sum_{i=0}^{d_c-1} \cN^i(g'_c) \tensor{\KK} \left(y_i^*\left(y_\bullet\right)\right) \stackrel{\eqref{eq:clarify}}{=} \sum_{i=0}^{d_c-1} \left(g(c)(Y^iY^\bullet)\right) \tensor{\KK} \left(y_i^*\left(y_\bullet\right)\right) \\
& \stackrel{\eqref{eq:saft}}{=} \sum_{i=0}^{d_c-1} \left(g(c_{(1)})\left(g(c_{(2)})\left(Y^i\right)Y^\bullet\right)\right) \tensor{\KK} \left(y_i^*\left(y_\bullet\right)\right) \stackrel{\eqref{eq:useful}}{=} \sum_{i=0}^{d_c-1} \left(g(c_{(1)})\left(\sum_{k=0}^{n}\binom{n}{k}Y^k\partial^{n-k}\left(g'_{c_{(2)}}\left(i\right)\right)\right)\right)_{n\geq0} \tensor{\KK} \left(y_i^*\left(y_\bullet\right)\right) \\
& \stackrel{\eqref{eq:clarify}}{=} \sum_{i=0}^{d_c-1} \left(\sum_{k=0}^{n}\binom{n}{k}g'_{c_{(1)}}\left(k\right)\partial^{n-k}\left(g_{c_{(2)}}\left(i\right)\right)\right)_{n\geq 0} \tensor{\KK} \left(y_i^*\left(y_\bullet\right)\right) \stackrel{\eqref{eq:Hurwitz}}{=} \sum_{i=0}^{d_c-1} g'_{c_{(1)}}\cdot t\left(g'_{c_{(2)}}\left(i\right)\right) \tensor{\KK} \left(y_i^*\left(y_\bullet\right)\right) \\
& = \sum_{i=0}^{d_c-1} g'_{c_{(1)}} \tensor{\KK} s\left(g'_{c_{(2)}}\left(i\right)\right)\cdot \left(y_i^*\left(y_\bullet\right)\right) \stackrel{\eqref{eq:clarify}}{=} \sum_{i=0}^{d_c-1} g'_{c_{(1)}} \tensor{\KK} \left(\tilde{f}_{g'_{c_{(2)}}}\left(y_i\right)y_i^*\left(y_\bullet\right)\right) = \sum_{(c)} g'_{c_{(1)}} \tensor{\KK} \left(\tilde{f}_{g'_{c_{(2)}}}\left(y_\bullet\right)\right) \\
& \stackrel{\eqref{eq:clarify}}{=} \sum_{(c)} g'_{c_{(1)}} \tensor{\KK} \left(g\left(c_{(2)}\right)\left(Y^\bullet\right)\right) = \sum_{(c)} g_{c_{(1)}} \tensor{\KK} g_{c_{(2)}} = (g\tensor{\KK}g)\left(\Delta_C(c)\right),
\end{align*}
where $d_c$ is the degree of $g'_c$. Therefore, it is a coring homomorphism and the proof is complete.
\end{proof}

%

\subsection{Comparing linearly and differentially recursive sequences}

Let us conclude the section by showing how the notion of differentially recursive sequences is related to the classical notion of linearly recursive sequences over fields. Given any field $\FF$, recall that a sequence $\alpha\in\FF^\N$ is linearly recursive if there exist $d\geq 0$ and coefficients $b_0,\ldots,b_{d-1}\in\FF$ such that
\[
\alpha(n+d) = b_{d-1}\alpha(n+d-1) + \cdots + b_0\alpha(n)
\]
for all $n\geq 0$. The space of linearly recursive sequences over $\FF$ will be denoted by $\Lin{\FF}$. Notice that $\alpha$ is linearly recursive  if and only if it satisfies $\cL(\alpha)=0$ where $\cL=\sum_{i=0}^{d}s(c_i)\cN^i$ and $c_i\in\FF$ for all $i=0,\ldots,d$. If $\FF$ is considered as differential field with the zero derivation, then the morphisms $s,t:\FF \to \cH(\FF)$ obviously coincide. Therefore, a linearly recursive sequence over $\FF$ (in the classical sense) is the same as a differentially recursive sequence over $(\FF,0)$ (in the sense of the present paper). Thus, the notion of differentially recursive sequence does not add anything new to the picture when the base field is differential with zero derivation.  

In general, for a differential field $(\KK,\partial)$, the spaces $\dlin{\KK}$ and $\Lin{\KK}$ are related  by the following commutative diagram of $\K$-algebras
\begin{equation}\label{eq:diag}
\begin{gathered}
\xymatrix @=10pt{
 & \cH(\KK) & \\
\dlin{\KK} \ar[ur] & & \Lin{\KK} \ar[ul] \\
 & \Lin{\K} \ar[ul] \ar[ur] & 
}
\end{gathered}
\end{equation}
where, as usual, $\K$ is the field of constants of $\left(\KK,\partial\right)$ and all the morphisms are injective. In the forthcoming Example \ref{exam:Dlink} we show that, in general, $\dlin{\KK}$ and $\Lin{\KK}$ have different images inside $\cH(\KK)$.

\begin{remark}\label{rem:polykilllin}
Consider the assignment $\lambda_s : \KK \to \End{\K}{\cH(\KK)}$ sending every $x\in\KK$ to the endomorphism $\lambda_{s(x)}:\cH(\KK)\to\cH(\KK), \alpha\mapsto s(x)\alpha =(x\alpha_n)_{\Sscript{n \, \in \, \mathbb{N}}}$. It is a morphism of $\K$-algebras satisfying $\cN\circ \lambda_{s(x)} = \lambda_{s(x)} \circ \cN$ for every $x\in\KK$. Therefore, it extends to a unique $\K$-algebra morphism $\Lambda: \KK[Z] \to \End{\K}{\cH(\KK)}$ which makes of $\cH(\KK)$ a left $\KK[Z]$-module with action
\[
P(Z)\triangleright \alpha = \cL(\alpha),
\]
where $\cL = \sum_{j=0}^ds(p_j)\cN^j$ if $P(Z) = \sum_{j=0}^d p_jZ^j$. With this interpretation, $\alpha\in\Lin{\KK}$ if and only if $P(Z)\triangleright \alpha = 0$ for some $P(Z)\in\KK[Z]$.
However, this procedure does not convert $\cH(\KK)$ into a $\left(\KK[Z],\KK\right)$-bimodule, as
\[
Z\triangleright\left(\alpha\triangleleft x\right) = Z \triangleright \left(t(x)\cdot\alpha\right) = \cN\left(t(x)\cdot\alpha\right) = t\left(\partial(x)\right)\cdot \alpha + t(x)\cdot \cN\left(\alpha\right)
\]
while
\[
\left(Z\triangleright\alpha\right)\triangleleft x = t(x)\cdot \cN\left(\alpha\right)
\]
for all $x\in\KK$, $\alpha\in\cH(\KK)$. In particular, $\cH(\KK)$ is not a $\left(\KK[Z],\KK[Y;\partial]\right)$-bimodule.
\end{remark}

\begin{example}\label{exam:Dlink}
In general, there is no evident relation between $\dlin{\KK}$ and $\Lin{\KK}$. 
Consider, for example, the case $\KK=\C(z)$ with the usual derivative $\partial_z$. If we pick $1/z\in\C(z)$, then
\[
t\left(\frac{1}{z}\right) = \left((-1)^n\frac{n!}{z^{n+1}}\right) = \left(\frac{1}{z},-\frac{1}{z^2},\frac{2}{z^3},-\frac{6}{z^4},\cdots\right) \in\cH\left(\C(z)\right)
\]
and satisfies $t(1/z)\triangleleft \big(\partial_z(1/z) - Y(1/z)\big) = 0$, whence it belongs to $\dlin{\C(z)}$. However, let us show that it cannot belong to $\Lin{\C(z)}$ by mimicking the proof of \cite[Lemma B.6]{ArdiLaiachiPaolo-DQ}. Observe that if we assume that 
\begin{equation}\label{eq:gatoencerrado}
0 = \sum_{i=0}^d s(c_i)\cN^i\left(t\left(\frac{1}{z}\right)\right) = \left(\sum_{i=0}^d(-1)^{n+i}\frac{(n+i)!c_i}{z^{n+i+1}}\right)_{n\geq0} = \left(\frac{\sum_{i=0}^d(-1)^{n+i}(n+i)!c_iz^{d-i}}{z^{n+d+1}}\right)_{n\geq 0},
\end{equation}
then, in particular, the set of elements $\left\{c_0z^d,\ldots,c_{d-1}z,c_d\right\}\subseteq \C(z)$ satisfy the $(d+1)\times (d+1)$ linear system:
\[
\begin{pmatrix} 
0! & -1! & 2! & \cdots & (-1)^dd! \\ 
-1! & 2! & -3! & \cdots & (-1)^{d+1}(d+1)! \\ 
2! & -3! & 4! & \cdots & (-1)^{d+2}(d+2)! \\
\vdots & \vdots & \vdots & \ddots & \vdots \\
(-1)^dd! & (-1)^{d+1}(d+1)! & (-1)^{d+2}(d+2)! & \cdots & (-1)^{2d}(2d)!
\end{pmatrix}
\cdot \left(\begin{array}{c} c_0z^d \\ c_1z^{d-1} \\ \vdots \\ c_{d-1}z \\ c_d\end{array}\right) = 0
\]
The matrix of this system is $T=\Big((-1)^{i+j}(i+j)!\Big)_{i,j}$, for $i,j$ that run from $0$ to $d$ and it satisfies
\begin{align*}
\det(T) & = \det\Big((-1)^{i+j}(i+j)!\Big) = \det\Big((-1)^{i+j}i!j!q_{ij}\Big) = \left(\prod_{i=0}^d(-1)^ii!\right)\left(\prod_{j=0}^d(-1)^jj!\right)\det(Q_d) \\
 & = \left((-1)^{\frac{d(d+1)}{2}}\prod_{i=0}^di!\right)^2\det(Q_d) = \left(0!1!\cdots d!\right)^2\det(Q_d)
\end{align*}
where $q_{ij}=\binom{i+j}{i}$ and $Q_d$ is the $d$-th Pascal matrix. In view of \cite[Discussion preceding Theorem 4]{BrawerPivorino}, we know that $\det(Q_d)=1$, whence $\det(T) \neq 0$ and hence $T$ is invertible. As a consequence, the only solution turns out to be $c_i=0$ for all $i=0,\ldots,d$, 
and so there is no non-trivial relation of the form \eqref{eq:gatoencerrado}. 

In the other way around,  consider the sequence $\alpha = \left(z^\bullet\right)=(1,z,z^2,\ldots)\in\C(z)^\N$. This is linearly recursive since it satisfies $(z-Z)\triangleright \alpha = 0$, but it cannot be differentially recursive because of the following argument. Assume, by contradiction, that there exists $P(Y) = \sum_{i=0}^dY^ic_i \in \C(z)[Y;\partial]$ with $c_d\neq 0$ such that $\alpha\triangleleft P(Y) = 0$. This implies that
\[
0=\left(\sum_{i=0}^d\sum_{k=0}^n\binom{n}{k}\partial_z^k(c_i)z^{n-k+i}\right)_{n\geq 0} = \left(\sum_{k=0}^n\binom{n}{k}\left(\sum_{i=0}^d\partial_z^k(c_i)z^i\right)z^{n-k}\right)_{n\geq 0}.
\]
By induction on $m$, one deduces from this that $\sum_{i=0}^d\partial_z^m(c_i)z^i=0$ for all $m\geq 0$.
Now, from $\sum_{i=0}^d\partial_z^m\left(c_i\right)z^i = 0$ we deduce that
\[
0 = \partial_z\left(\sum_{i=0}^d\partial_z^m\left(c_i\right)z^i\right) = \sum_{i=0}^d\partial_z^{m+1}\left(c_i\right)z^i + \sum_{i=1}^di\partial_z^m\left(c_i\right)z^{i-1} = \sum_{i=1}^di\partial_z^m\left(c_i\right)z^{i-1}
\]
and hence $\sum_{i=1}^di\partial_z^m\left(c_i\right)z^{i-1} = 0$ for all $m\geq 0$. In the same way,
\[
0 = \partial_z\left(\sum_{i=1}^di\partial_z^m\left(c_i\right)z^{i-1}\right) = \sum_{i=1}^di\partial_z^{m+1}\left(c_i\right)z^{i-1} + \sum_{i=2}^di(i-1)\partial_z^m\left(c_i\right)z^{i-2},
\]
so that $\sum_{i=2}^di(i-1)\partial_z^m\left(c_i\right)z^{i-2} = 0$ for every $m\geq 0$. By proceeding inductively, after $d$ steps one concludes that $d!\partial_z^m(c_d) = 0$ for every $m\geq 0$. In particular, for $m=0$ we find $d!c_d = 0$, which contradicts the choice of $c_d$. Thus, there does not exist $P(Y)\neq 0$ such that $\alpha\triangleleft P(Y) = 0$. 

Summing up, there exist linearly recursive sequences that are not differentially recursive and conversely. In addition, there exist sequences which are linearly and differentially recursive but that are not coming from $\Lin{\C}$. The easiest example is $s(z) = (z,0,\ldots)$: it satisfies $\cN\left(s(z)\right) = 0$, which identifies it as a linearly and a differentially recursive sequence ($Z\triangleright s(z) = 0 = s(z) \triangleleft Y$), but it is not an element of $\C^\N$. In fact, any sequence with compact (finite) support would be linearly and differentially recursive without being in $\C^\N$.
\end{example}


\section{Connections with linear differential matrix equations}\label{sec:matrixdiffeq}

This  section is devoted to explain  how differential linear matrix equations can be approached by means of differentially recursive sequences.  Firstly we show that the space of these sequences can be seen as a direct limit of all spaces of formal solutions of linear homogeneous differential equations (that is, it is a kind of  ``universal formal solution''). Secondly we comment on how Picard-Vessiot ring extensions can be constructed from the Hopf algebroid of all differentially recursive sequences by analysing, for the sake of simplicity, the case of two-dimensional differential vector spaces. 

As before we fix a differential field $(\KK, \partial)$ with (non-trivial) sub-field of constants $\K$ and we consider its differential algebra of Hurwitz series $(\cH(\KK), \cN)$ and its algebra of differential operators $U:=\KK[Y, \partial]$.


\subsection{$\dlin{\KK}$ as the universal algebroid of solutions}

Consider a differential operator $\cL = \sum_{i=0}^d t(c_i)\cN^i$ and $P_{\cL}(Y) \coloneqq  \sum_{i=0}^d Y^ic_i$ the associated element in $U$. For any $\alpha\, \in \, \cH(\KK)$ solution of $\cL(\alpha)=0$, we have that the map $f_{\alpha}$ of equation \eqref{eq:alphaf} vanishes on the right ideal  $P_{\cL}(Y)U$ and conversely. Therefore, the following correspondences are bijective
\begin{equation}\label{eq:LaBrocante}
\begin{gathered}
\xymatrix@R=0pt{
\ker{\cL} \ar@{->}^-{\cong}[r] & \left(P_{\cL}(Y)U\right)^\perp \ar@{->}^-{\cong}[r] & \left(\frac{U}{P_{\cL}(Y)U}\right)^*=\hom{\KK}{ \frac{U}{P_{\cL}(Y)U}}{\KK}\\
\alpha \ar@{|->}[r] & f_\alpha \ar@{|->}[r] & \tilde{f}_\alpha
},
\end{gathered}
\end{equation}
where $\left(P_{\cL}(Y)U\right)^\perp = \left\{f:U\to \KK\mid f\left(P_{\cL}(Y)U\right) = 0\right\}$. In light of this, we refer to $M_\cL^*\coloneqq \left({U}/{P_{\cL}(Y)U}\right)^*$ as the space of solutions of the differential equation $\cL(\alpha)=0$. It is a differential module itself with
\begin{equation}\label{eq:dualdiff}
\partial_{M_\cL^*}:M_\cL^* \longrightarrow M_\cL^*, \qquad f\longmapsto \left[m \mapsto \partial(f(m)) - f\left(\partial_{M_\cL}(m)\right)\right]
\end{equation}
and a left $\Circ{U}$-comodule with
\begin{equation}\label{eq:coaction}
\rho_{M_\cL^*}:M_\cL^* \longrightarrow \Circ{U}\tensor{K}M_\cL^*,\qquad f \longmapsto \sum_{i=0}^{\deg(P_\cL)-1}\overline{f \tensor{T_{M_\cL}} y_i} \tensor{\KK} y_i^*.
\end{equation}

\begin{example}\label{exam:RR}
Here are some basic examples. Let $\R$ be the field of real numbers with zero derivation.
\begin{enumerate}[leftmargin=1cm]
\item Consider the differential equation $\partial\left(y\right)=yx$ for $x\in\R$. To it, one assigns the operator $\cL=\cN-t(x)$ on $\cH(\R)$ which corresponds to the polynomial $P_{\cL}(Y) = Y - x \in \R[Y;0]$. The space of solutions of $\cL$ is the one-dimensional vector space $\left(\R[Y]/P_{\cL}(Y)\R[Y]\right)^*$, which means that any solution is a scalar multiple of the one associated with the linear functional $\R[Y]/P_{\cL}(Y)\R[Y] \to \R, \overline{1}\mapsto 1$. By pre-composition with the canonical projection, we find $\R[Y] \to \R, Y^n \mapsto x^n$, corresponding to the sequence $\left(1,x,\ldots,x^n,\ldots\right)$, which in turn can be seen (via the algebra isomorphism $\R[Y]^*\cong \R[[T]]$) as the power series $\sum_{n\geq 0}\frac{x^n}{n!}T^n = \exp(xT)$. It is well-known that the solutions of $y'=yx$ are of the form $y=c\exp(xT)$ for $c\in \R$.
\item Consider the equation $\partial^2\left(y\right)+\omega^2 y=0$ with $\omega\neq 0$. The space of solutions of the associated operator $\cL = \cN^2 - t\left(\omega^2\right)$ is $\left(\R[Y]/\left(Y^2-\omega^2\right)\R[Y]\right)^*$, which is $2$-dimensional. Therefore, any solution is a linear combination of those corresponding to the morphisms $y_0^*$ and $y_1^*$. Now, by pre-composition with the canonical projection, $y_0^*$ corresponds to $\left(1,0,-\omega^2,0,\omega^4,0,-\omega^6,\ldots\right)$ while $y_1^*$ corresponds to $\left(0,1,0,-\omega^2,0,\omega^4,0,-\omega^6,\ldots\right)$. One would easily recognize the power series expansions of $\cos(\omega T)$ and $\frac{1}{\omega}\sin(\omega T)$, as expected from the theory of ordinary differential equations.
\end{enumerate}
\end{example}

Analogously, one may consider a differential polynomial $P(Y) \in U$ and set $M_{\Sscript{P}} \coloneqq {U}/{P(Y)U}$. If $\cL_P \coloneqq \sum_{i=0}^d t(c_i)\cN^i$ denotes the associated differential operator then $M_{\Sscript{P}}{}^*=\hom{\KK}{M_{\Sscript{P}}}{\KK} \cong \ker{\cL_P}$ can be seen as a sub-space of $\dlin{\KK}$. 

\begin{remark}\label{rem:nabla}
Let us introduce, for the sake of brevity, the component-wise derivation
\begin{equation}\label{eq:nablaexplicit}
\nabla:\cH(\KK) \longrightarrow \cH(\KK), \qquad \nabla(\beta) \coloneqq \left(\partial(\beta(n))\right)_{{n\geq 0}}
\end{equation}
and observe that the inverse of \eqref{eq:LaBrocante} is explicitly given by $M_{\cL}^{*} \to \ker{\cL}, f \mapsto \left(f(y_{n})\right)_{{n\geq 0}}$. Thanks to this, one may endow $\ker{\cL}$ with a structure of differential $\KK$-module with
\begin{equation}\label{eq:nabla}
\partial_{{\ker{\cL}}}(\alpha) = \nabla(\alpha) - \cN(\alpha)
\end{equation}
for every $\alpha \in \ker{\cL}$.
\end{remark}

The following proposition shows why $\Circ{U}$, or $\dlin{\KK}$, can be referred to as the universal space of solutions.

\begin{proposition}\label{prop:directlimit}
The family $\Big\{ M_P{}^* = \Hom{}{}{\KK}{}{M_P}{\KK} \Big\}_{\Sscript{P(Y)\, \in\,  U}}$, where $M_P\coloneqq U/P(Y)U$, forms a directed system of left $\Circ{U}$-comodules with morphisms $\pi_{P,Q}^*:M_P^* \to M_Q^*$ induced by the canonical right $U$-linear projections $\pi_{P,Q} : M_Q \to M_P$ for $Q(Y) \in P(Y)U$. Moreover, we have the following directed limit
\[
\injlimit{P\,\in\, U}{M_P{}^*}\, \cong\, \Circ{U}
\]
of left $\Circ{U}$-comodules with canonical injections being the comodule maps $M_{\Sscript{P}}{}^* \hookrightarrow \Circ{U}$, $f \mapsto \bara{f\tensor{T_{\Sscript{M_P}}} \bara{1} }$, where $\Circ{U}$ is a comodule over itself via the comultiplication $\Delta_\circ$ (see Remark \ref{rem:hopfalgd}). 
\end{proposition}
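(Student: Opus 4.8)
The plan is to treat this as a Tannaka-style colimit statement and, wherever possible, to reduce it to the identification $\Circ{U}\cong\dlin{\KK}$ already established in Theorem \ref{thm:big}. Throughout I would restrict the index set to the \emph{nonzero} polynomials $P(Y)\in U$, since only for those is $M_P=U/P(Y)U$ finite-dimensional (indeed $\dim_{\KK}M_P=\deg P$ by the right division algorithm on $\KK[Y;\partial]$) and hence a legitimate object of the reconstruction category of \cite{LaiachiGomez} underlying \eqref{eq:coaction}.

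First I would set up the directed system. Order the nonzero polynomials by right-divisibility, declaring $P\preceq Q$ whenever $Q(Y)\in P(Y)U$ (equivalently $Q(Y)U\subseteq P(Y)U$); this is exactly the condition under which the canonical right $U$-linear projection $\pi_{P,Q}:M_Q\to M_P$ exists, and dualizing gives the transition map $\pi_{P,Q}^*:M_P^*\to M_Q^*$, which is injective because $\pi_{P,Q}$ is surjective. Functoriality ($\pi_{Q,R}^*\circ\pi_{P,Q}^*=\pi_{P,R}^*$) is immediate from $\pi_{P,Q}\circ\pi_{Q,R}=\pi_{P,R}$. The only nonformal point here is \emph{directedness}: given $P_1,P_2$ I would invoke that $U=\KK[Y;\partial]$ is a right Ore domain and a right principal ideal domain, so that $P_1(Y)U\cap P_2(Y)U$ is a nonzero right ideal, necessarily of the form $Q(Y)U$; this $Q$ then satisfies $P_1\preceq Q$ and $P_2\preceq Q$.

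Next I would handle the comodule bookkeeping. Each $M_P^*$ carries the left $\Circ{U}$-coaction \eqref{eq:coaction}, and the candidate injections are the maps $\iota_P:M_P^*\to\Circ{U},\ f\mapsto\overline{f\tensor{T_{M_P}}\bara{1}}$. Comparing the comultiplication of Remark \ref{rem:hopfalgd} (evaluated at $m=\bara{1}$ with the dual basis $\{y_i,y_i^*\}$ of $M_P$) with \eqref{eq:coaction} shows at once that $\Delta_\circ\circ\iota_P=(\id\tensor{\KK}\iota_P)\circ\rho_{M_P^*}$, so each $\iota_P$ is a comodule map. That the transition maps are comodule maps is a short computation using the defining relation $\overline{\psi\tensor{T_N}hx}=\overline{\psi h\tensor{T_M}x}$ of $\Circ{U}$: writing $\{z_j,z_j^*\}$ for a dual basis of $M_Q$ and expanding $\pi_{P,Q}(z_j)=\sum_i y_i\,y_i^*(\pi_{P,Q}(z_j))$, one rewrites $\overline{(f\circ\pi_{P,Q})\tensor{T_{M_Q}}z_j}=\overline{f\tensor{T_{M_P}}\pi_{P,Q}(z_j)}$ and collapses the sum over $j$ via $\sum_j(y_i^*\circ\pi_{P,Q})(z_j)\,z_j^*=y_i^*\circ\pi_{P,Q}$. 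Finally, since $\pi_{P,Q}$ sends the generator $\bara{1}$ of $M_Q$ to the generator $\bara{1}$ of $M_P$, the same relation gives $\iota_Q\circ\pi_{P,Q}^*=\iota_P$; hence the $\iota_P$ are compatible and induce a morphism of left $\Circ{U}$-comodules $\iota:\injlimit{P}{M_P^*}\to\Circ{U}$.

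It then remains to see that $\iota$ is bijective, which I regard as the crux. For surjectivity I would argue intrinsically: given a generator $\overline{\varphi\tensor{T_M}m}$, consider the cyclic differential submodule $m\triangleleft U\subseteq M$, which as a right $U$-module is $U/\ann{m}$ with $\ann{m}=P(Y)U$ for some nonzero $P$ (again using that $U$ is a right PID and $M$ is finite-dimensional). The resulting differential embedding $j:M_P\hookrightarrow M$ with $j(\bara{1})=m$ satisfies, by the relation of $\Circ{U}$, $\iota_P(\varphi\circ j)=\overline{(\varphi\circ j)\tensor{T_{M_P}}\bara{1}}=\overline{\varphi\tensor{T_M}m}$, so every generator is hit. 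For injectivity I would use that all transition maps are injective, whence the canonical maps $M_P^*\to\injlimit{P}{M_P^*}$ are injective and the colimit is the directed union of their images; since each $\iota_P$ is injective (indeed $\zeta\circ\iota_P$ is the tautological embedding $(P(Y)U)^\perp\hookrightarrow U^*$, with $\zeta$ injective by \cite[Corollary 3.3.6]{LaiachiGomez}), any class killed by $\iota$ is represented by some $f\in M_P^*$ with $\iota_P(f)=0$, forcing $f=0$. Alternatively, and perhaps more transparently, one may transport the whole picture through the isomorphism $\phi$ of Theorem \ref{thm:big}: under $\phi$ each $\iota_P$ becomes the inclusion of the solution space $M_\cL^*\cong\ker{\cL_P}$ into $\dlin{\KK}$, the transition maps become the inclusions $\ker{\cL_P}\subseteq\ker{\cL_Q}$ valid for $Q\in P(Y)U$, and the identity $\dlin{\KK}=\bigcup_P\ker{\cL_P}$ is exactly the content of Definition \ref{def:LRS}. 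Either way the main obstacle is this exhaustion/surjectivity step, where one genuinely needs that $U$ is a principal ideal domain so that every element of $\Circ{U}$ originates from a single finite-codimensional principal right ideal.
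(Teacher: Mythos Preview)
Your proposal is correct and shares with the paper the same key insight: any generator $\overline{\varphi\tensor{T_M}m}$ of $\Circ{U}$ can be rewritten as $\overline{(\varphi\circ j)\tensor{T_{M_P}}\bara{1}}$ by passing to the cyclic submodule generated by $m$, whose annihilator is a principal right ideal $P(Y)U$. The organization differs, however. The paper verifies the \emph{universal property} directly: given any comodule $V$ with compatible maps $\sigma_P:M_P^*\to V$, it constructs the unique factoring map $\beta:\Circ{U}\to V$ via $\beta\big(\overline{\varphi\tensor{T_M}m}\big)=\sigma_{P_m}(\varphi\circ\phi_m)$, then checks well-definedness and colinearity. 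You instead build the comparison morphism $\iota:\injlimit{P}{M_P^*}\to\Circ{U}$ and establish bijectivity; this is logically equivalent but somewhat lighter, since you avoid the bookkeeping of well-definedness for an arbitrary target $V$. Your alternative route through $\phi$ and the identity $\dlin{\KK}=\bigcup_P\ker{\cL_P}$ is a genuinely different and cleaner argument not present in the paper, though it proves only the bijection of underlying vector spaces and one must still transport the comodule structure back. You also make explicit two points the paper leaves implicit: the restriction to nonzero $P$ and the directedness of the index set via the right Ore/PID property of $U$.
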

\begin{proof}
The morphisms $\pi_{P,Q}^*:M_P^* \to M_Q^*$ are colinear because of the following direct computation
\[
\sum_{i=0}^{\deg(Q)-1}\overline{\pi_{P,Q}^*(f) \tensor{T_{M_Q}} y_i} \tensor{\KK} y_i^* \stackrel{(*)}{=} \sum_{i=0}^{\deg(Q)-1}\overline{f \tensor{T_{M_P}} \pi_{P,Q}\left(y_i\right)} \tensor{\KK} y_i^* \stackrel{(**)}{=} \sum_{i=0}^{\deg(P)-1}\overline{f \tensor{T_{M_P}} y_i} \tensor{\KK} \pi_{P,Q}^*\left(y_i^*\right)
\]
where $(*)$ follows from the fact that $\pi_{P,Q}$ is a morphism of differential modules and hence it belongs to $T_{M_Q,M_P}$ and $(**)$ from the fact that the dual basis map $\KK \to M_P^* \tensor{\KK} M_P,\, 1_{\KK}\mapsto \sum_{i=0}^{\deg(P)-1} y_i \tensor{\KK} y_i^* $ satisfies
\[
\sum_{i=0}^{\deg(Q)-1} \pi_{P,Q}\left(y_i\right) \tensor{\KK} y_i^* = \sum_{i=0}^{\deg(P)-1} y_i \tensor{\KK} \pi_{P,Q}^*\left(y_i^*\right).
\]
Let us show that $\Circ{U}$ satisfies the universal property of the stated colimit. First of all, for every $P(Y)\in U$ consider the assignment $\psi_P:M_P^* \to \Circ{U}, f\mapsto \overline{f\tensor{T_{M_P}}\cl{1}}$. For all $x\in \KK$ and for all $f\in M_P^*$, we have that
\[
\psi_P\left(x\cdot f\right) = \psi_P\left(\lambda_x\circ f\right) = \overline{\lambda_x\circ f\tensor{T_{M_P}}\cl{1}} = x \cdot \big( \overline{f \tensor{T_{M_P}}\cl{1}}\big),
\]
whence $\psi_P$ is $\KK$-linear and
\[
\Delta_\circ\left(\psi_P(f)\right) = \sum_{i=0}^{\deg(P)-1} \overline{f \tensor{T_{M_P}} y_i} \tensor{\KK} \overline{y_i^* \tensor{T_{M_P}} \cl{1}} = \sum_{i=0}^{\deg(P)-1} \overline{f \tensor{T_{M_P}} y_i} \tensor{\KK} \psi_P\left(y_i^*\right) 
\]
whence it is left colinear. Moreover
\[
\psi_Q\left(\pi_{P,Q}^*\left(f\right)\right) = \psi_Q\left(f\circ \pi_{P,Q}\right) = \overline{f\circ \pi_{P,Q}\tensor{T_{M_Q}}\cl{1}} = \overline{f \tensor{T_{M_P}}\pi_{P,Q}\left(\cl{1}\right)} = \overline{f\tensor{T_{M_P}}\cl{1}} = \psi_P\left(f\right)
\]
for all $P(Y)\in U$, $Q(Y)\in P(Y)U$, $f\in M_P^*$. Thus, the $\psi_P$'s are compatible with the morphisms of the directed system, and by the isomorphism of Theorem \ref{thm:big} they are all injective.

Assume now that there exist a left $\Circ{U}$-comodule $V$ and $\Circ{U}$-colinear morphisms $\sigma_P : M_P^* \to V$ for all $P(Y)\in U$ such that $\sigma_Q \circ \pi_{P,Q}^* = \sigma_P$ for all $Q(Y)\in P(Y)U$. Now, for every finite-dimensional right $U$-module $M$ pick an element of the form $\varphi\tensor{T_M}m \in M^* \tensor{T_M} M$. Since $M$ is finite-dimensional over $\KK$, $m$ satisfies a relation of the form $m\triangleleft P_m(Y) = 0$ for a certain monic $P_m(Y)\in U$ (see Proposition \ref{prop:big1}). Thus the (unique) right $U$-linear morphism $\Phi_m:U\to M$ mapping $1$ to $m$ factors (uniquely) through a right $U$-linear morphism $\phi_m: M_{P_m} \to M, \cl{1}\mapsto m$. Define $\beta_M : M^* \tensor{T_M} M \to V, \varphi\tensor{T_M}m \mapsto \sigma_{P_m}\left(\varphi\circ \phi_m\right)$. This is well-defined because if $h\in T_M$, then
\[
0 = h\left(m\triangleleft P_m(Y)\right) = h\left(m\right)\triangleleft P_m(Y)
\]
and so $P_m(Y) \in P_{h(m)}(Y)U$. The induced morphism $\pi_{P_{h(m)},P_m}:U/P_m(Y)U \to U/P_{h(m)}(Y)U$ satisfies then $h\circ \phi_m = \phi_{h(m)} \circ \pi_{P_{h(m)},P_m}$ and hence
\[
\sigma_{P_m}\left(\left(\varphi \circ h\right)\circ \phi_m\right) = \sigma_{P_m}\left(\varphi \circ \phi_{h(m)} \circ \pi_{P_{h(m)},P_m}\right) = \left(\sigma_{P_m} \circ \pi_{P_{h(m)},P_m}^*\right)\left(\varphi \circ \phi_{h(m)} \right) = \sigma_{P_{h(m)}}\left(\varphi \circ \phi_{h(m)} \right).
\]
The family of morphisms $\beta_M$ for $M$ varying over all finite-dimensional right $U$-modules induces a unique morphism
\[
\beta' : \bigoplus_{M\in\cA_U} M^* \tensor{T_M} M \to V
\]
which factors through $\beta: \Circ{U} \to V$ by an argument similar to the one used to show that $\beta_M$ was well-defined.
Of course, $\beta\left(\psi_P(f)\right) = \beta\left(\overline{f\tensor{T_P}\cl{1}}\right) = \sigma_P(f)$ for all $P(Y)\in U$ and all $f\in M_P^*$. In addition, $\beta$ is the unique satisfying this property because we know that  
$$
\overline{\varphi \tensor{T_M} m} = \overline{\varphi\circ\phi_m \tensor{T_{M_{P_m}}}\cl{1}} = \psi_{P_m}\left(\varphi\circ\phi_m\right)
$$ for all $\overline{\varphi\tensor{T_M}m}\in\Circ{U}$. We are then left to check that $\beta$ is left $\Circ{U}$-colinear. To this aim, for every $\overline{\varphi \tensor{T_M} m} \in \Circ{U}$ pick a dual basis $\left\{e_i,e_i^*\mid i=1,\ldots,d\right\}$ of $M$ and compute
\begin{gather*}
\rho_V\left(\beta\left(\overline{\varphi \tensor{T_M} m}\right)\right) = \rho_V\left(\sigma_{P_m}\left(\varphi\circ\phi_m\right)\right) \stackrel{(*)}{=} \left(\Circ{U}\tensor{\KK}\sigma_{P_m}\right)\left(\rho_{M_{P_m}^*}\left(\varphi\circ\phi_m\right)\right) \\
\stackrel{\eqref{eq:coaction}}{=} \sum_{i=0}^{\deg(P_m)-1}\overline{\left(\varphi\circ\phi_m\right) \tensor{T_{M_{P_m}}} y_i} \tensor{\KK} \sigma_{P_m}\left(y_i^*\right) = \sum_{i=0}^{\deg(P_m)-1}\overline{\varphi \tensor{T_{M}} \phi_m\left(y_i\right)} \tensor{\KK} \sigma_{P_m}\left(y_i^*\right) \\
= \sum_{k=1}^{d}\overline{\varphi \tensor{T_{M}} e_k} \tensor{\KK} \sigma_{P_m}\left(\sum_{i=0}^{\deg(P_m)-1}e_k^*\left(\phi_m\left(y_i\right)\right)y_i^*\right) = \sum_{k=1}^{d}\overline{\varphi \tensor{T_{M}} e_k} \tensor{\KK} \sigma_{P_m}\left(e_k^*\circ\phi_m\right) \\
= \left(\Circ{U}\tensor{\KK}\beta\right)\left(\Delta_\circ\left(\overline{\varphi \tensor{T_M} m}\right)\right). \qedhere
\end{gather*}
\end{proof}

The subsequent result provides a constructive method for finding these solutions, given $d$ initial conditions $a_0,\ldots,a_{d-1}\in\KK$. It is noteworthy the role played in this by the universal coring structure on $\dlin{\KK}$. 

\begin{proposition}\label{prop:alpha}
A solution $\alpha\in\cH(\KK)$ of the equation $\cL=\sum_{i=0}^{d}t(c_i)\cN^i=0$, subject to the initial conditions $a_0,\ldots,a_{d-1}\in\KK$, is explicitly given by $\alpha(i)=a_i$ for $i=0,\ldots,d-1$ and
\[
\alpha(n) = \sum_{i=0}^{d-1}a_iy_i^*\left(y_n\right)
\]
for all $n\geq d$, where $\big\{y_i^*\big\}_{\Sscript{i=0,\,\ldots,\, d-1}}$ is the basis of the space of solutions $\left(U/P_\cL(Y)U\right)^*$ dual to the basis $\big\{y_i\coloneqq Y^i+P_\cL(Y)U\big\}_{\Sscript{i=0,\,\ldots,\, d-1}}$ of $M_{\Sscript{\cL}}\coloneqq U/P_\cL(Y)U$.
\end{proposition}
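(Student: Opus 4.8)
The plan is to exploit the bijection \eqref{eq:LaBrocante} between $\ker{\cL}$ and the solution space $M_\cL^* = \left(U/P_\cL(Y)U\right)^*$, whose inverse is recorded in Remark \ref{rem:nabla} as $f \mapsto \left(f(y_n)\right)_{n\geq 0}$. Since $c_d \neq 0$, we may assume $P_\cL(Y)$ to be monic of degree $d$, so that by Proposition \ref{prop:big2} the classes $\{y_i = Y^i + P_\cL(Y)U\}_{i=0,\ldots,d-1}$ form a $\KK$-basis of $M_\cL$ and $\{y_i^*\}_{i=0,\ldots,d-1}$ is the associated dual basis. Every solution of $\cL(y)=0$ is therefore of the form $\alpha = \left(f(y_n)\right)_{n\geq 0}$ for a unique $f \in M_\cL^*$, and expanding $f$ in the dual basis yields $f = \sum_{i=0}^{d-1} f(y_i)\, y_i^*$.

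First I would translate the initial conditions into conditions on $f$. Since $y_i^*(y_j) = \delta_{ij}$, evaluating $\alpha(j) = f(y_j)$ for $0 \leq j \leq d-1$ shows that prescribing $\alpha(j) = a_j$ is equivalent to requiring $f(y_j) = a_j$, i.e. to the choice $f = \sum_{i=0}^{d-1} a_i\, y_i^*$. Because $\{y_i\}$ is a basis, this $f$ is the unique functional with these values, so the initial data single out a unique solution, exactly as expected for a $d$-th order equation.

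Then I would simply read off the resulting sequence. With $f = \sum_{i=0}^{d-1} a_i\, y_i^*$, the associated solution is
\[
\alpha(n) = f(y_n) = \sum_{i=0}^{d-1} a_i\, y_i^*\left(y_n\right)
\]
for every $n \geq 0$. For $0 \leq n \leq d-1$ the element $y_n$ is a basis vector, so $y_i^*(y_n) = \delta_{in}$ and the formula returns $\alpha(n) = a_n$, confirming that the prescribed initial values are attained; for $n \geq d$ the class $y_n = Y^n + P_\cL(Y)U$ must first be rewritten in terms of $y_0, \ldots, y_{d-1}$ (as in the inductive argument of Proposition \ref{prop:big2}), and the same formula then yields the stated closed expression.

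There is no genuinely hard step here: once the correspondence \eqref{eq:LaBrocante} and the basis furnished by Proposition \ref{prop:big2} are in place, the statement is a direct application of the dual-basis expansion of a linear functional together with the identification of $\dlin{\KK}$ with $\Circ{U}$ from Theorem \ref{thm:big}. The only point requiring a little care is the normalisation to a monic $P_\cL$ of degree exactly $d$, which guarantees simultaneously that $M_\cL$ is $d$-dimensional and that $\{y_i\}_{i<d}$ is a basis; this is precisely what makes the $d$ coefficients $a_0, \ldots, a_{d-1}$ the correct amount of data to determine the solution uniquely.
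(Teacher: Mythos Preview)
Your argument is correct, but it takes a different route from the paper's own proof. You work directly with the linear-algebra content of the bijection \eqref{eq:LaBrocante}: a solution $\alpha$ corresponds to a functional $\tilde{f}_\alpha \in M_\cL^*$, expanding that functional in the dual basis $\{y_i^*\}$ gives coefficients $\tilde{f}_\alpha(y_i) = \alpha(i) = a_i$, and evaluating at $y_n$ recovers the formula. This is self-contained once Proposition \ref{prop:big2} and \eqref{eq:LaBrocante} are in place and does not require any of the Hopf algebroid machinery.

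The paper instead derives the formula from the \emph{coring} structure of $\dlin{\KK}$: it invokes Theorem \ref{thm:bullet} to write $\alpha(h+k)$ via the compatibility relation \eqref{eq:saft}, then specialises to $k=0$, $h=n$ and substitutes the explicit comultiplication \eqref{eq:comult} to obtain $\alpha(n) = \sum_i \xi(\cN^i(\alpha))(1)\,y_i^*(y_n) = \sum_i a_i\,y_i^*(y_n)$. This is deliberately less direct: the paper's point (stated just before the proposition) is precisely to exhibit the formula as a consequence of the universal coring structure, so that the comultiplication encodes the passage from initial data to the full solution. Your approach buys simplicity and independence from \S2.3; the paper's approach buys a structural explanation of \emph{why} the dual basis of $M_\cL$ is the right object, tying the result back to the Hopf algebroid theme of the paper.
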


\begin{proof}
In view of Theorem \ref{thm:bullet}, we know that
\begin{gather*}
\alpha(h+k) = \xi(\alpha)\left(Y^{h+k}\right) = \sum_{(\alpha)} \xi\left(\alpha_{(1)}\right)\left(\xi\left(\alpha_{(2)}\right)\left(Y^h\right)Y^k\right) \\
= \sum_{j=0}^k\binom{k}{j}\xi\left(\alpha_{(1)}\right)\left(Y^j\right)\partial^{k-j}\left(\xi\left(\alpha_{(2)}\right)\left(Y^h\right)\right) = \sum_{j=0}^k\binom{k}{j}\alpha_{(1)}\left(j\right)\partial^{k-j}\left(\alpha_{(2)}\left(h\right)\right).
\end{gather*}
By writing this relation with $k=0$, $h=n$ and by resorting to the explicit description of $\Delta(\alpha)$ given in \eqref{eq:comult} we find out that
\[
\alpha(n) = \sum_{i=0}^{d-1}\xi\left(\cN^{i}(\alpha)\right)\left(1\right)\xi\left(\left(y_i^*\left(y_\bullet\right)\right)\right)\left(Y^n\right) = \sum_{i=0}^{d-1}a_iy_i^*\left(y_n\right)
\]
as claimed.
\end{proof}

\begin{remark}\label{rem:solutions}
An useful consequence of Proposition \ref{prop:alpha} is the following iterative method to construct formal solutions to homogeneous linear differential equations. Consider an equation of the form
\[
0 = \cL(y) = \partial^n\left(y\right) - (c_{n-1}\partial^{n-1}\left(y\right) + \cdots + c_1\partial\left(y\right) + c_0y)
\]
over a differential field $(\KK,\partial)$ as usual, and consider its extension
\[
0 = \cL(y) = \cN^{n}(y) - (t\left(c_{n-1}\right)\cN^{n-1}(y) + \cdots + t\left(c_1\right)\cN(y) + t\left(c_0\right)y)
\]
to $\left(\cH(\KK),\cN\right)$. Its space of solutions $M_\cL^*$ is an $n$-dimensional vector space over $\KK$ with basis $\left\{y_0^*,\ldots,y_{n-1}^*\right\}$ and we know, from Proposition \ref{prop:alpha}, that a full set of linearly independent formal solutions to $\cL(y)=0$ in $\cH(\KK)$ (in fact, in $\dlin{\KK}$) is provided by the sequences $o_i\coloneqq\left(y_i^*\left(y_\bullet\right)\right)$ for $i=0,\ldots,n-1$ and where we recall that $y_k = Y^k + P_\cL(Y)U \, \in U/ P_\cL(Y)U$, for all $k\geq 0$. For every $k\geq 0$ consider the column vector $\left(o_0\left(k\right), o_1\left(k\right),\ldots,o_{n-1}\left(k\right)\right)^T\in \KK^n$ and consider the matrix
\[
A\coloneqq\begin{pmatrix}
0 & 0 & \cdots & 0 & c_0 \\
1 & 0 & \cdots & 0 & c_1 \\
0 & 1 & \cdots & 0 & c_2 \\
\vdots & \vdots & \ddots & \vdots & \vdots \\
0 & 0 & \cdots & 1 & c_{n-1}
\end{pmatrix}.
\]
Then the solutions $o_i$ satisfy the recursive formula
\[
\begin{pmatrix}
o_0(k+1) \\
o_1(k+1) \\
\vdots \\
o_{n-1}(k+1)
\end{pmatrix} = 
\begin{pmatrix}
\partial\left(o_0(k)\right) \\
\partial\left(o_1(k)\right) \\
\vdots \\
\partial\left(o_{n-1}(k)\right)
\end{pmatrix} + 
A \cdot \begin{pmatrix}
o_0(k) \\
o_1(k) \\
\vdots \\
o_{n-1}(k)
\end{pmatrix}
\]
for all $k\geq n-1$, subject to the initial conditions
\[
\begin{pmatrix}
o_{0}(0) & o_0(1) & \cdots & o_0(n-1) \\
o_{1}(0) & o_1(1) & \cdots & o_1(n-1) \\
\vdots & \vdots & \ddots & \vdots \\
o_{n-1}(0) & o_{n-1}(1) & \cdots & o_{n-1}(n-1) \\
\end{pmatrix} = I_n,
\]
where $I_n$ is the identity $n\times n$ matrix.
\end{remark}

\begin{example}\label{ex:alphaBeta}
For a differential operator of degree two $\cL= \cN ^{2} - t(c_{\Sscript{1}}) \cN - t(c_{\Sscript{0}})$, the basis of the $\KK$-vector space of solutions inside $\dlin{\KK}$ is computed as follows. Consider  the attached differential polynomial $P_{\Sscript{\cL}}(Y) = Y^2-Yc_{\Sscript{1}} -c_{\Sscript{0}}$. Set  $y_{\Sscript{0}}\coloneqq\bara{1} = 1 +P_{\Sscript{\cL}}(Y)U$,  $y_{\Sscript{1}}\coloneqq y= Y +  P_{\Sscript{\cL}}(Y)U=\partial(\bara{1})$ (the basis as in Proposition \ref{prop:big2}) and for higher degree $y_n\coloneqq  Y^{n} +P_{\Sscript{\cL}}(Y)U=\partial^n(\bara{1})$, $n \geq 2$. Inside $\Circ{U}$ we have the following four elements 
$$
\bara{y_0^*\tensor{T_M}y_0},\quad  \bara{y_0^*\tensor{T_M}y_1},\quad \bara{y_1^*\tensor{T_M}y_0} \quad \text{  and } \;\; \bara{y_1^*\tensor{T_M}y_1}, 
$$
which, respectively, correspond to the following four sequences:
\begin{multline*}
\alpha^{0}=(1, 0, c_0, \partial\left(c_0\right)+c_0c_1, \cdots), \quad \cN(\alpha^0)= ( 0, c_0, \partial\left(c_0\right)+c_0c_1, \cdots), \\ \alpha^1= ( 0,1, c_1, c_1^2+c_0+\partial\left(c_1\right), \cdots),\quad \cN(\alpha^{1}) =(1, c_1, c_1^2+c_0+\partial\left(c_1\right), \cdots).
\end{multline*}
In matrix form, we have that 
$$
\begin{pmatrix}  \alpha^{0} \\ \\  \alpha^{1} \end{pmatrix} \,=\,  \begin{pmatrix} 1 & 0 & c_0 & \partial\left(c_0\right)+c_0c_1 & \cdots  & \alpha^{0}(n) & \cdots \\ & & & & & &   \\ 0  & 1 & c_1 & c_1^2+c_0+\partial\left(c_1\right) &  \cdots & \alpha^{1}(n) & \cdots   \end{pmatrix},
$$ 
with the following matrix recursive relations: 
$$
\begin{pmatrix}  \alpha^{0}(n) \\ \\  \alpha^{1}(n) \end{pmatrix} \,=\,  \begin{pmatrix}  \partial\left(\alpha^{0}(n-1)\right) \\ \\  \partial\left(\alpha^{1}(n-1)\right) \end{pmatrix} + \begin{pmatrix} 0 & c_0 \\ &    \\ 1 & c_1 \end{pmatrix} \begin{pmatrix}  \alpha^{0}(n-1) \\ \\  \alpha^{1}(n-1) \end{pmatrix},    \; \forall\, n\, \geq 2.
$$
We know from Proposition \ref{prop:alpha}, that $\{\alpha^0, \alpha^1\}$ leads to the full set of solutions of $\cL(y)=0$. We also know that $\beta^0=\cN(\alpha^0)$ and $\beta^{1}=\cN(\alpha^1)$ generate the space of solution of a differential equation of the same degree that we want to determine now.  For this reason, we will analyse the two cases: $c_0=0$ and $c_0\neq 0$. 

For the case $c_0=0$, we have that $\alpha^{0}=(1, 0, 0, \cdots)=s(1)$  and so $\cN(\alpha^{0})=\beta^{0}=0$, thus $\bara{y_0^*\tensor{T_M}y_1}=0$ in $\Circ{U}$. On the other hand,  we know that $\partial(y_1) -y_1c_1= 0$ and hence $\beta^{1}$ satisfies $\cN(\beta^{1}) -t(c_1) \beta^{1}=0$.  

For the case $c_0\neq0$, we follow explicitly the argument of the proof of Proposition \ref{prop:big1}. Since $y_0 = y_2/c_0 - y_1 c_1/c_0$, we have that 
$$
y_3\,=\, \partial(y_2) \,=\, y_2\big( c_1+\frac{\partial\left(c_0\right)}{c_0} \big) + y_1\big( \partial\left(c_1\right)+c_0-\frac{\partial\left(c_0\right)}{c_0} c_1\big).
$$
Therefore, 
$$
\partial^2(y_1) -\partial(y_1) b_1-y_1b_0\,=\,0, \quad \text{ with } \;   b_0= \partial\left(c_1\right)+c_0-\frac{\partial\left(c_0\right)}{c_0} c_1,\; b_1= c_1+\frac{\partial\left(c_0\right)}{c_0}. 
$$
As a consequence, $\bara{y_0^*\tensor{T_M}y_1}$ and $\bara{y_1^*\tensor{T_M}y_1}$ (and so $\beta^0$ and $\beta^1$ as well) satisfy the equation 
$$
\cN^2(y) -t(b_1) \cN(y) -t(b_0) y\,=\,0
$$
with $b_0, b_1$ as above. Observe that if $c_0, c_1 \in \K$ (i.e., they are constant elements),  then $b_0=c_0$ and $b_1=c_1$, and so $\beta^{0}, \beta^{1}$ satisfy the same recursive relation as $\alpha^{0}, \alpha^{1}$. 
\end{example}

\begin{example}\label{ex:order1}
Let $\left(\KK,\partial\right)\coloneqq (\C(z), \partial/\partial {z})$ be the field rational functions on $\C$ with the differential induced by the formal derivation with respect to $z$. On $\KK$, consider the general homogeneous linear differential equation of order one
\begin{equation}\label{eq:genord1}
\cL(y) = \partial\left(y\right)-ay=0
\end{equation}
for $a=u(z)/v(z)\in \C(z)$, $u(z),v(z)\in\C[z]$. Observe that the space of solutions of \eqref{eq:genord1} in a differential extension of $\C(z)$ is one-dimensional, because \eqref{eq:genord1} is of order one. In particular, if a non-zero solution belongs to some differential extension $(R,\partial_R)\supseteq (\KK,\partial)$, then $R$ contains a full set of solutions of \eqref{eq:genord1}. 

Being $\C$ algebraically closed, $v(z)=\prod_{i=1}^N\left(z-r_i\right)^{n_i}$ and hence, by the division algorithm in $\C[z]$, we have that $a$ can be rewritten as its partial fraction decomposition
\[
a=\sum_{i=1}^N\sum_{j=1}^{n_i}\frac{c_{i,j}}{\left(z-r_i\right)^j} + p(z)
\]
for certain $p(z)\in\C[z]$, $c_{i,j}\in\C$. It can be checked directly, by elementary arguments, that \eqref{eq:genord1} admits a non-zero solution in $\C(z)$ if and only if $c_{i,1}\in\ZZ$ for all $i=1,\ldots,N$, $c_{i,j}=0$ for all $i=1,\ldots,N$ and for all $j\geq 2$ and $p(z)=0$. It admits a non-zero solution which is algebraic over $\C(z)$ (\ie it satisfies a polynomial equation with coefficients in $\C(z)$) if and only if $c_{i,1}\in\QQ$ for all $i=1,\ldots,N$, $c_{i,j}=0$ for all $i=1,\ldots,N$ and for all $j = 2,\ldots,n_i$ and $p(z)=0$ (check \cite[Exercise 1.14(3)]{PutSinger}). 

Now, extend \eqref{eq:genord1} to $\left(\cH\left(\C(z)\right),\cN\right)$ via the differential morphism $t:\C(z) \to \cH\left(\C(z)\right)$. It becomes
\begin{equation}\label{eq:extgenord1}
\cL(y) = \cN(y)-t(a)y=0.
\end{equation}
Consider the associated polynomial $P_\cL(Y)=Y-a\in U=\KK[Y;\partial]$, the differential module
\[
M_\cL\coloneqq \frac{U}{P_\cL(Y)U}
\] 
with (right) $\KK$-basis $y_0\coloneqq 1+P_\cL(Y)U$ and differential
\[
\partial_\cL\left(u+P_\cL(Y)U\right) \coloneqq  \left(u+P_\cL(Y)U\right)\triangleleft Y = uY+P_\cL(Y)U, 
\]
and the associated left $\KK$-vector space $\left(U/P_\cL(Y)U\right)^*=\KK y_0^*$ of dimension one. If we set $y_n\coloneqq Y^n+P_\cL(Y)U$ for all $n\geq 0$, then there exists $o_n\in\C(z)$ (in fact, $o_n = y_0^*\left(y_n\right)$) such that $y_n = o_n + P_\cL(Y)U$ and hence
\begin{align*}
y_0o_{n+1} & = y_{n+1} = Y^{n+1}+P_\cL(Y)U = \left(Y^n+P_\cL(Y)U\right)\triangleleft Y = \left(o_n+P_\cL(Y)U\right)\triangleleft Y \\
& = \left(Yo_n+P_\cL(Y)U\right) + \left(\partial\left(o_n\right)+P_\cL(Y)U\right) = \left(Y+P_\cL(Y)U\right)\triangleleft o_n + \left(\partial\left(o_n\right)+P_\cL(Y)U\right) \\ & = \left(ao_n + \partial\left(o_n\right)\right)+P_\cL(Y)U \\
 & = y_0\left(ao_n + \partial\left(o_n\right)\right).
\end{align*}
By Proposition \ref{prop:alpha} we have that, for a given initial condition $\alpha(0)=p_0\in\C(z)$, the unique solution $\alpha$ of \eqref{eq:extgenord1} is given by $\alpha(n) = p_0o_n$ for all $n\geq 0 $ where $o_n$ satisfies the recursion
\[
\left\{
\begin{Array}[1.5]{l}
o_0 = 1, \\
o_{n+1} = ao_n + \partial\left(o_n\right),
\end{Array}
\right.
\]
that is to say,
\[
\alpha=s(p_0)\Big(1, a, a^2+\partial\left(a\right), a^3 + 3a\partial\left(a\right) + \partial^2\left(a\right), \ldots \Big).
\]
Set $o\coloneqq \left(o_\bullet\right)\coloneqq \left(y_0^*\left(y_\bullet\right)\right) = \left(1, a, a^2+\partial\left(a\right), \ldots \right)$, the solution corresponding to the initial condition $p_0=1$. In this more general framework, observe that \eqref{eq:genord1} admits a solution $f\in\C(z)$ if and only if $\partial\left(f\right)=af$, which is equivalent to say that $\partial^n\left(f\right) = o_nf$ for all $n\geq0$ (one implication is trivial, for the other one proceeds by induction on $n\geq1$). Thus, if and only if $t(f) = s(f)o$. 

Let us analyse the case $a=c/z$ for the sake of brevity, for some $c\in\C$. A direct computation by induction on $n\geq 1$ shows that
\begin{equation}\label{eq:power1}
o = \left(o_\bullet\right) = \left(1, \frac{c}{z}, \frac{c(c-1)}{z^2}, \frac{c(c-1)(c-2)}{z^3}, \ldots, \frac{c(c-1)\cdots(c-n+1)}{z^n},\ldots \right).
\end{equation}
Moreover, since $o$ satisfies $\cN(o)=t(a)o$, it follows that
\[
\cN(o^k) = ko^{k-1}\cN(o) = t(ka)o^{k},
\]
for all $k\in\ZZ\setminus\{0\}$, whence, by the same argument used to prove \eqref{eq:power1}, 
\begin{equation}\label{eq:kpower}
o^k = \left(1, \frac{kc}{z}, \frac{kc(kc-1)}{z^2}, \frac{kc(kc-1)(kc-2)}{z^3}, \ldots, \frac{kc(kc-1)\cdots(kc-n+1)}{z^n},\ldots \right).
\end{equation}
If $c\in\N$ then
\[
o = \left(1, \frac{c}{z}, \frac{c(c-1)}{z^2}, \ldots, \frac{c!}{z^c},0,\ldots \right) = \frac{1}{z^c}\left(z^c, cz^{c-1}, c(c-1)z^{c-2}, \ldots, c!,0,\ldots \right) = s\left(z^c\right)^{-1}t\left(z^c\right)
\]
and $\partial\left(y\right)-cy/z=0$ has general solution $f=\lambda z^c\in\C(z)$ for $\lambda\in\C$. If $c=-k$ for $k\in\N$ then
\begin{align*}
o & = \left(1, -\frac{k}{z}, \frac{k(k+1)}{z^2}, \ldots, (-1)^n\frac{k(k+1)\cdots(k+n-1)}{z^n},\ldots \right) \\
 & = z^k\left(\frac{1}{z^k}, -\frac{k}{z^{k+1}}, \frac{k(k+1)}{z^{k+2}}, \ldots, (-1)^n\frac{k(k+1)\cdots(k+n-1)}{z^{k+n}},\ldots \ldots \right) \\
 & = s\left(z^k\right)t\left(\frac{1}{z^k}\right)
\end{align*}
and $\partial\left(y\right)-cy/z=0$ has general solution $f=\lambda z^c\in\C(z)$ for $\lambda\in\C$. If $c=p/q\in\QQ$ with $p\in\ZZ$, $q\in\N\setminus\{0\}$, then 
\[
o^q \stackrel{\eqref{eq:kpower}}{=} \left(1, \frac{p}{z}, \frac{p(p-1)}{z^2}, \frac{p(p-1)(p-2)}{z^3}, \ldots, \frac{p(p-1)\cdots(p-n+1)}{z^n},\ldots \right),
\]
whence either $o^q = s\left(z^p\right)^{-1}t\left(z^p\right)$ (if $p>0$) or $o^q = s\left(z^p\right)t\left(z^p\right)^{-1}$ (if $p<0$). In both cases, $o$ is a solution of $\partial\left(y\right)-cy/z=0$ algebraic over $\C(z)$.
\end{example}

\begin{remark}
The following is a noteworthy relation arising from the computations performed in Example \ref{ex:order1}. For every $c\in\C$ and for every $k\in\N$, set formally
\[
\binom{c}{k}\coloneqq \frac{c(c-1)(c-2)\cdots(c-k+1)}{k!}.
\]
Then
\[
\sum_{k=0}^n\binom{c}{k}\binom{c}{n-k} = \binom{2c}{n}
\]
for all $n\geq 0$. Indeed, it is enough to compare term by term the explicit computation of $o^2$, by using the Hurwitz product, with formula \eqref{eq:kpower} for $k=2$. Even more general, for all $r,n\in\N$,
\[
\sum_{k_1+\cdots+k_r=n}\binom{c}{k_1}\binom{c}{k_2}\cdots\binom{c}{k_r} = \binom{rc}{n}.
\]
\end{remark}

\begin{example}\label{exm:C}
Let $(\KK,\partial)\coloneqq (\C(z), \partial/\partial {z})$ as in Example \ref{ex:order1}. Consider the following homogeneous differential equation:
\begin{equation}\label{Eq:C}
\partial^2\left(y\right) -\big(\frac{1}{z-1}\big)\partial\left(y\right)+ \big(\frac{1}{z-1}\big)^{2}y \,\,=\,\, 0.
\end{equation}
Denote by $c=\frac{1}{z-1} \in \KK$, so we have that $\partial(c)=-c^2$. The differential operator associated with \eqref{Eq:C} is $\cL= t(c^2) -t(c)\cN +\cN^2$ with corresponding polynomial $P_{\cL}(Y)=Y^2-Yc+c^2 \in U$. By Proposition \ref{prop:alpha}, a solution $\alpha \in \cH(\KK)$ of equation \eqref{Eq:C} subject to the initial conditions $a_{0}, a_{1}\in\KK$ has the form 
$$
\alpha(n)\,=\, a_{0} y_{0}^{*}(y_n) + a_{1} y_{1}^{*}(y_{n}).
$$
From the definition of the differential $\KK$-vector space $M_{\Sscript{\cL}}=U/P_{\cL}(Y)U$, we have the following recursive relation:
\[
\begin{pmatrix} y_{0}^{*}(y_{n+1})  \\ y_{1}^{*}(y_{n+1}) \end{pmatrix}\,=\,  \begin{pmatrix} \partial\big(y_{0}^{*}(y_{n}) \big)  \\ \partial\big(y_{1}^{*}(y_{n}) \big) \end{pmatrix} + \begin{pmatrix} 0 &  -c^2\\ 1 & c \end{pmatrix} \begin{pmatrix} y_{0}^{*}(y_{n})  \\ y_{1}^{*}(y_{n}) \end{pmatrix}.
\]
\begin{invisible}
In fact, set $y_n\coloneqq Y^n+P_\cL(Y)U$. Since $U/P_\cL(Y)U$ has dimension 2, generated by $y_0=1+P_\cL(Y)U$ and $y_1=Y+P_\cL(Y)U$, we have that for every $n\geq 2$ there exist $a_n,b_n\in\KK$ such that $y_n=y_0a_n+y_1b_n$. Therefore
\begin{gather*}
y_0a_{n+1}+y_1b_{n+1} = y_{n+1} = \left(Y^n+P_\cL(Y)U\right)\triangleleft Y = \left(y_0a_n+y_1b_n\right)\triangleleft Y = y_0'a_n+y_0a_n'+y_1'b_n+y_1b_n' \\
 = y_1a_n+y_0a_n'+\left(y_1c-y_0c^2\right)b_n+y_1b_n' = y_0\left(a_n'-c^2b_n\right) + y_1\left(b_n' + a_n + cb_n\right).
\end{gather*} 
After recalling that $a_n=y_0^*(y_n)$ and $b_n= y_1^*(y_n)$, the thesis follows.
\end{invisible}
In matrix form, the fundamental solutions (\ie those generating $\left(U/P_\cL(Y)U\right)^*$) can be expressed by:
\begin{equation}\label{eq:matrix}
\begin{pmatrix}           1 & 0 & -c^2 & c^3  &  -2 c^4   &  6 c^5 & \cdots  & (-1)^{n+1}(n-2)! c^n  & \cdots & \cdots     \\    &  & &   &     &  &   &   \\            0 & 1 & c & -c^2  &  2c^3   &  -6c^4 & \cdots  & (-1)^{n}(n-2)! c^{n-1}  & \cdots & \cdots        \end{pmatrix}
\end{equation}
The general solution $\alpha \in \cH(\KK)$ is then
$$
\begin{cases} \displaystyle \alpha(n) \,=\, (-1)^{n-1} \frac{(n-2)!\big(a_0-a_1(z-1)\big)}{(z-1)^{n}},\quad n \geq 2  \\  \\ \alpha(0)=a_{0},\, \alpha(1)=a_{1}. \end{cases}
$$
\end{example}

\begin{remark}
In Example \ref{exm:C} observe that, since $\partial\left(c\right)=-c^2$, $P_\cL(Y)=Y^2-Yc+c^2 = (Y-c)Y$. Therefore, any $\beta$ such that $\cN(\beta)-t(c)\beta=0$ satisfies $\cL(\beta)=0$ as well. Thanks to Example \ref{ex:order1}, we know that $\beta=s\left(b_0\right)\left(1,\frac{1}{z-1},0,\ldots\right)$ for some initial condition $b_0\in\KK$.  Now, by looking at \eqref{eq:matrix} the reader may easily convince himself that 
\[  
\beta = s(b_0) \Big(y_0^*\left(y_n\right) + c\, y_1^*\left(y_n\right)\Big)_{\Sscript{n \, \in \, \mathbb{N}}}, \quad b_0 \in \KK.
\]
as expected.
\end{remark}


\subsection{Comments on the Picard-Vessiot ring extension}   
In this section, we discuss the relation between the Picard-Vessiot differential ring extension of a given differential module and the Hopf algebroid of differentially recursive sequences.  For simplicity, we only treat the case of a differential module of rank two (that is, a differential $\KK$-vector space $(M,\partial_M)$ of dimension two).  We will often implicitly refer to notations and constructions from \cite{LaiachiGomez}.

Consider, as before, $(\KK, \partial)$ a differential field with $\Bbbk=\KK^{\partial}\subsetneq \KK$ its non-trivial sub-field of constant elements (assumed now to be algebraically closed of characteristic zero). 
Let us consider a linear homogeneous scalar differential equation
\begin{equation}\label{eq:Trescabrasyunamuerta}
\partial^2(y) - c_1\partial(y) - c_0y = 0,
\end{equation}
with $c_0, c_1 \in \KK$. After extending the latter equation to $(\cH(\KK),\cN)$ via the differential algebra homomorphism $t$, it corresponds to $\cL(\alpha)=0$ where the differential operator is given by $\cL=\cN^2 -t(c_1)\cN -t(c_0)$ and the associated differential polynomial by $P(Y)=Y^2-Yc_1-c_0$. Set $M \coloneqq U/P(Y)U$. It is a differential module $(M,\partial_{M})$ of dimension two over $\KK$ with dual basis $\{y_0, y_1, y_0^*, y_1^*\}$ and differential $\partial_M(y_n)=y_{n+1}$, where $y_n=Y^n+ P(Y)U$ for all $n \geq 0$. Therefore, $\partial_M(y_0)=y_1$ and $\partial_M(y_1)= c_0y_0+c_1y_1$. By considering the column expression (with the usual minus), the matrix of the differential $\partial_M$ computed as in \cite[page 7]{PutSinger} is then of the form   $\begin{pmatrix} 0 & -c_0 \\ -1 & -c_1  \end{pmatrix}$. 
Recall from Remark \ref{rem:nabla} that $\ker{\cL}\cong M^{*}$ as differential $\KK$-modules. The matrix of the differential module $(M^*, \partial_{M^*})$ is then the opposite of the transpose of the previous one. That is, we have $\partial_{M^*}(y_0^*)= -c_0y_1^*$ and $\partial_{M^*}(y_1^*)=-y_0^*-c_1y_1^*$. 
By Remark \ref{rem:solutions}, the dual basis for the solution space $M^*$ of $\cL(\alpha)=0$ over $\cH(\KK)$ satisfies the recursive relation:
\[
\begin{pmatrix} y_{0}^{*}(y_{n+1})  \\ \\ y_{1}^{*}(y_{n+1}) \end{pmatrix}\,=\,  \begin{pmatrix} \partial\big(y_{0}^{*}(y_{n}) \big)  \\ \\ \partial\big(y_{1}^{*}(y_{n}) \big) \end{pmatrix} + \begin{pmatrix} 0 &  c_0\\ & \\ 1 & c_1 \end{pmatrix} \begin{pmatrix} y_{0}^{*}(y_{n})  \\ \\ y_{1}^{*}(y_{n}) \end{pmatrix}, \quad \text{ for }  \, n \geq 0.
\]

In what follows, we will implicitly identify $M$ with $(M^*)^*$ in the rigid symmetric monoidal category of  differential modules over $\KK$.  
 Consider, as in Example \ref{ex:alphaBeta}, the following four elements 
$$
x_{00}\coloneqq \bara{y_0\tensor{T_{M^{*}}}y_0^{*}},\;  x_{01}\coloneqq \bara{y_0\tensor{T_{M^{*}}}y_1^{*}},\; x_{10}\coloneqq \bara{y_1\tensor{T_{M^{*}}}y_0^{*}} \; \text{ and }\, x_{11}\coloneqq \bara{y_1\tensor{T_{M^{*}}}y_1^{*}}
$$
in the Hopf $\KK$-algebroid $\Circ{U}$, that is, the differentially recursive sequences 
$$
\alpha^{0}=\big( y_0^*(y_n) \big)_{n \,\in \, \mathbb{N}},\; \alpha^{1}=\big( y_1^*(y_n) \big)_{n \,\in \, \mathbb{N}},\; \cN\big(\alpha^{0}\big),\, \text{ and } \, \cN\big(\alpha^{1}\big).
$$ 
Following \cite[Lemma 5.4.2]{LaiachiGomez}, the element $det(M^{*})=x_{00}x_{11}-x_{01}x_{10}$ is invertible in $\Circ{U}$ and it inverse is given by 
$$
det(M^{*})^{-1}\,=\, \bara{(y_0\wedge y_1)^* \tensor{T_{\bigwedge^2M}} (y_0\wedge y_1)} \, \in \, \Circ{U},
$$
where $\bigwedge^2M$ is the two-exterior power differential $\KK$-module of $(M,\partial_{M})$.

We denote by $\Uv$ the Hopf $\KK$-sub-algebroid of $\Circ{U}$ generated by the set of elements $\{x_{ij}, det(M^*)^{-1}\}_{0 \leq i,j \leq 1}$. It turns out that the Hopf algebroid $\Uv$ is in fact  the universal object constructed from the rigid monoidal full sub-category $\{\{M^*\}\}$ sub-quotient generated by $(M^*,\partial_M^*)$. Moreover, since we know that $\partial_\circ \circ s=0$ and $\partial_\circ \circ t = t \circ \partial$ (see Remark \ref{rem:hopfalgd}), the $(\KK\tensor{\Bbbk}\KK)$-algebra $\Uv$ is (via the target map) a differential extension of $(\KK, \partial)$ with differential the restriction of $\partial_\circ$. 

Let us denote by $\bd{\cP}$ the total isotropy Hopf $\KK$-algebra $\Uv/\langle s-t \rangle$, where $\langle s-t \rangle$ denote the Hopf ideal generated by the set $\{s(u)-t(u)\}_{u \, \in \, \Uv}$. In light of \cite[Proposition 5.5.2]{LaiachiGomez}, $\bd{\cP}$ is generated as a $\KK$-algebra by the elements:
$$
f_{ij}\coloneqq  x_{ij}+ \langle s-t \rangle, \quad 0 \leq i,j \leq 1,\quad \text{ and } \quad (f_{00}f_{11}-f_{01}f_{10})^{-1}.
$$
Moreover, it is a differential $\KK$-algebra whose differential $\delta: \bd{\cP} \to \bd{\cP}$ can be expressed by the rule
\begin{equation}\label{eq:delta}
\delta\left( \begin{pmatrix} f_{00} & f_{01} \\ f_{10} & f_{11} \end{pmatrix} \right) = \begin{pmatrix} f_{10} & f_{11} \\ c_0f_{00} + c_1f_{10} & c_0f_{01} + c_1f_{11} \end{pmatrix}= \begin{pmatrix} 0 & 1 \\ c_0 & c_1  \end{pmatrix}\begin{pmatrix} f_{00} & f_{01} \\ f_{10} & f_{11} \end{pmatrix} .
\end{equation}
Therefore the matrix $F\coloneqq (f_{ij})_{0 \leq i,j \leq 1}$ is a fundamental matrix (in the sense of \cite[Definition 1.9]{PutSinger}) for the linear differential matrix equation  attached to $(M^*,\partial_{M^*})$, with entries in $\bd{\cP}$. Furthermore, one can adapt the proof of \cite[Proposition 5.5.2]{LaiachiGomez} to show that $(\bd{\cP},\delta)$ is in fact a Picard-Vessiot ring of the differential $\KK$-vector space $(M^*,\partial_{M^*})$.  Notice that, as a differential $\KK$-algebra, $(\bd{\cP},\delta)$ is not an extension of the differential $\KK$-algebra $(\Uv,\partial_\circ)$, because the Hopf ideal $\langle s-t\rangle$ is not necessarily $\partial_\circ$-stable.  

Consider now the differential $\KK$-vector space $\left(\bd{\cP}\tensor{\KK}M^*, \partial_{\bd{\cP}\tensor{\KK}M^*}\right)$ with $\partial_{\bd{\cP}\tensor{\KK}M^*} = \delta\tensor{\KK} M^* + \bd{\cP} \tensor{\KK} \partial_{M^*}$ (see \cite[page 44]{PutSinger}). A direct check shows that the two elements 
$$
p_0\coloneqq  f_{00}\tensor{\KK}y_0^* + f_{10}\tensor{\KK}y_1^* \qquad \text{and} \qquad p_1\coloneqq f_{01}\tensor{\KK}y_0^* + f_{11}\tensor{\KK}y_1^*,
$$ 
in $\bd{\cP}\tensor{\KK}M^*$ are $\Bbbk$-linearly independent. Notice that $\{p_0,p_1\}$ generates the two dimensional $\Bbbk$-vector space $\ker{\partial_{\bd{\cP}\tensor{\KK}M^*}} \subseteq \bd{\cP}\tensor{\KK}V$, which is the solution space (in the sense of \cite[page 13]{PutSinger}) of the linear differential matrix equation  defined by $(M^*,\partial_{M^*})$. 

The assignment $\ker{\partial_{\bd{\cP}\tensor{\KK}M^*}} \to M^*, p_i \mapsto y_i^*$, clearly defines a monomorphism of $\K$-vector spaces. Moreover, when $c_0\neq 0$ one can show that the $\Bbbk$-vector space $\ker{\partial_{\bd{\cP}\tensor{\KK}M^*}}$ is isomorphism to the following  $\Bbbk$-subspace of $\bd{\cP}$:
\begin{equation}\label{Eq:KP}
\Big\{ p \in \bd{\cP}| \; \delta^2(p)= c_0p + (c_1+\frac{\partial\left(c_0\right)}{c_0})\delta(p)\Big\}.
\end{equation}

\begin{example}\label{Exam:Riccati}
Assume that $c_1=0$ and that $c_0\neq 0$ is a constant element of $\KK$. Take any non zero element $p$ in the sub-space described by equation \eqref{Eq:KP}, that is, $0\neq p \in \bd{\cP}$ such that $\delta^2(p)=c_0p$. Then the element $u=\delta(p)p^{-1}$ in the field of fractions of $\bd{\cP}$ is a solution of the equation $\partial\left(u\right) + u^2 = c_0$. The converse is also true when $\KK=\mathbb{C}(z)$ with the differential  $\partial/\partial z$. In this case the equation $\partial\left(u\right)+u^2=c_0$ is the so called Riccati equation. 
Assume now that $\KK=\C(z)$ and $c_1 \neq 0$. Denote by $\delta$ the differential on the field of fractions of $\bd{\cP}$ as well. More generally, if $p\in\bd{\cP}$ is a non-zero element of the Picard-Vessiot ring $\bd{\cP}$, then $u = \delta(p)/p$ satisfies
\[
\delta\left(\frac{\delta(p)}{p}\right) = \frac{\delta^2(p)p - \delta(p)^2}{p^2} = \frac{c_0p^2 + (c_1+\frac{\partial\left(c_0\right)}{c_0})p\delta(p) - \delta(p)^2}{p^2} = c_0 + \left(c_1+\frac{\partial\left(c_0\right)}{c_0}\right)\frac{\delta(p)}{p} - \left(\frac{\delta(p)}{p}\right)^2,
\]
that is to say, $u$ satisfies the (generalized) Riccati equation $\delta(u) = a(z) + b(z)u - u^2$ where $a(z) = c_0$ and $b(z) = c_1+\frac{\partial\left(c_0\right)}{c_0}$ (see \cite[\S I.1]{Reid}). Conversely, assume that $u$ is a solution of the Riccati equation and consider $y$ a solution of $\delta(y)=uy$. Then, we have
\[
\delta^2(y) = \delta(u)y+u\delta(y) = a(z)y + b(z)uy - u^2y+u\delta(y) + u^2y = c_0y + \left(c_1+\frac{\partial\left(c_0\right)}{c_0}\right)\delta(y).
\] 
\end{example}

\begin{remark}
Since $(M^*,\partial_{M^*})$ is the dual of $(M,\partial_M)$ in the category of differential modules, we have the bijective correspondences
\begin{equation}\label{eq:hom}
\hom{U}{\left(M,\partial_{M}\right)}{\left(\bd{\cP},\delta\right)} \cong \hom{U}{(\KK,\partial)}{\left(\bd{\cP}\tensor{\KK}M^*, \partial_{\bd{\cP}\tensor{\KK}{M^*}}\right)} \cong \ker{\partial_{\bd{\cP}\tensor{\KK}{M^*}}}
\end{equation}
(see, for example, \cite[page 45]{PutSinger}).
The distinguished differential morphisms in $\hom{U}{M}{\bd{\cP}}$ corresponding to the basis $\{p_0, p_1\}$ of $\ker{\partial_{\bd{\cP}\tensor{\KK}M^*}}$ under the isomorphism \eqref{eq:hom} are
\begin{equation}\label{Eq:M*P}
\begin{gathered}
\xymatrix@R=0pt@C=35pt{
q_0:M \; \ar@{^{(}->}[r] &   \Uv \ar@{->>}[r]  & \bd{\cP} \\  
y_0 \ar@{|->}[r]&  x_{00} \ar@{|->}[r]    & f_{00} \\ 
y_1 \ar@{|->}[r]  & x_{10} \ar@{|->}[r]   & f_{10}   
} 
\qquad  
\xymatrix@R=0pt@C=35pt{
q_1: M \; \ar@{^{(}->}[r] &   \Uv \ar@{->>}[r]  & \bd{\cP} \\  
y_0 \ar@{|->}[r]&  x_{01} \ar@{|->}[r]    & f_{01}   \\ 
y_1 \ar@{|->}[r]  & x_{11} \ar@{|->}[r]   & f_{11}    
}
\end{gathered}.
\end{equation}
Notice that the first one is induced by the canonical maps of Proposition \ref{prop:directlimit}.
\end{remark}

\begin{remark}\label{rem:Galoisgroup}
Under certain assumption (mainly on the generators $f_{ij}$ of the algebra $\cP$), one can connect the group of automorphisms of the differential vector space $(M, \partial_M)$ (which, in the sense of \cite{Keigher-diff},  is the dual space of solutions of equation \eqref{eq:Trescabrasyunamuerta}, see Remark \ref{rem:nabla}) with the differential group of the differential ring $(\cP, \delta)$, i.e., the group of $\KK$-algebras automorphisms of $\bd{\cP}$ that commute with the derivation $\delta$, which we  denote by ${\rm Aut}_{\Sscript{\text{alg-diff}}}((\cP,\delta))$. Precisely, if we assume that any invertible matrix $(\sigma_{ij})_{\Sscript{i,j}} \in GL_2(\KK)$ induces  a $\KK$-algebra automorphism of $\cP$ defined on the generator $f_{ij}$ by 
\begin{equation}\label{eq:sigma}
\sigma: \cP\longrightarrow \cP, \quad \Big(  (f_{ij})_{0\leq i,j\leq 1} \longmapsto  \begin{pmatrix} \sigma_{00} & \sigma_{01}\\ \sigma_{10} & \sigma_{11}  \end{pmatrix}  \begin{pmatrix} f_{00} & f_{01} \\ f_{10} & f_{11}  \end{pmatrix} \Big),
\end{equation}
then one shows that the group of automorphisms of the differential $\KK$-module $(M,\partial_{M})$ is in fact identified with a subgroup of the group ${\rm Aut}_{\Sscript{\text{alg-diff}}}((\cP,\delta))$. Specifically, let $\fk{g}$ be a $\KK$-linear automorphism of $M$ such that $\fk{g} \circ \partial_{M} = \partial_{M} \circ \fk{g}$. We use the above dual basis $\{y_0,y_1\}$ of $M$, and we set $\fk{g}(y_i)=g_{0i}y_0+ g_{1i}y_1$, $i=0,1$, for some $(g_{ij})_{\Sscript{0\leq i, j \leq 1}} \in GL_2(\KK)$. Then, one can easily check that the matrix $(g_{ij})_{\Sscript{0\leq i,j\leq 1}}$ satisfies
$$
\Big[\begin{pmatrix} 0 & c_0 \\ 1 & c_1 \end{pmatrix}  \, ,\, (g_{ij})_{\Sscript{i, j}}  \Big ]\,\,=\,\, (\partial g_{ij})_{\Sscript{i, j}},
$$ 
where the bracket stands for the Lie bracket (compare with \cite[Theorem 3.5]{Keigher-diff}).   Now under the above assumption, we have a well defined monomorphism of groups given by: 
$$
{\rm Aut}_{\text{diff}}((M^*, \partial_{M^*})) \longrightarrow {\rm Aut}_{\Sscript{\text{alg-diff}}}((\cP,\delta)), \quad \Big(  (g_{ij})_{\Sscript{i, j}} \longmapsto   (g_{ij})_{\Sscript{i, j}}^{T} \Big),
$$
where the matrix $(g_{ij})_ {\Sscript{0\leq i,j\leq 1}}^{T}$ is the transpose of $(g_{ij})_ {\Sscript{0\leq i,j\leq 1}}$, and  stands for an automorphism as in \eqref{eq:sigma}. 
\end{remark}


\vspace{0,5cm}

\noindent\textbf{Acknowledgements:}
Paolo Saracco expresses his heartfelt gratitude to the members of the Department of Algebra of the University of Granada for their warm hospitality and friendship during his stay in November-December 2019, when the greatest part of this work has been written.

\appendix

\section{Some technical details}\label{sec:appendix}

This appendix contains some results that we used along the paper but that we considered too technical for the main body. We report them here for the sake of completeness and of the unaccustomed reader.

\begin{lemma}\label{lemmaApp:vanish}
Let $\alpha\in\dlin{\KK}$ be a differentially recursive sequence of order $d$ (see Definition \ref{def:LRS}). If there exists an operator $\cL$ such that $\cL(\alpha)(n) = 0$ for all $n=0,\ldots,d-1$, then $\cL(\alpha)\equiv 0$. In particular, $\cL(\alpha) = 0$ if and only if $\cL(\alpha)(n) = 0$ for all $n=0,\ldots, d-1$.
\end{lemma}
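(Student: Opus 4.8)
The plan is to pass to the right $U$-module language of Remark \ref{rem:polykill} and to reduce everything to a bound on the \emph{order} of the sequence $\cL(\alpha)$. Writing $\cL=\sum_{i=0}^{e}t(c_i)\cN^i$ and $P_{\cL}(Y)=\sum_{i=0}^{e}Y^ic_i\in U$, one has $\cL(\alpha)=\alpha\triangleleft P_{\cL}(Y)$; set $\beta\coloneqq\cL(\alpha)$. Since $\dlin{\KK}$ is a right $U$-submodule of $\cH(\KK)$ by Remark \ref{rem:Ulin}, the sequence $\beta$ again belongs to $\dlin{\KK}$, and the statement we must prove becomes: if $\beta(0)=\cdots=\beta(d-1)=0$, then $\beta\equiv 0$. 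I would isolate this as the whole content of the lemma, the final ``in particular'' clause then being immediate (one implication is trivial and the other is precisely this assertion).

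The step I expect to be the crux is the estimate that the order of $\beta$ is at most $d$. To prove it, I would compute the cyclic submodule generated by $\beta$:
\[
\beta\triangleleft U=\big(\alpha\triangleleft P_{\cL}(Y)\big)\triangleleft U=\alpha\triangleleft\big(P_{\cL}(Y)U\big)\subseteq\alpha\triangleleft U.
\]
Now the evaluation map $u\mapsto\alpha\triangleleft u$ identifies $\alpha\triangleleft U$ with $U/\ann{\alpha}=M_\alpha$ as right $U$-modules, and since $\alpha$ has order $d$, Proposition \ref{prop:big2} gives $\dim_{\KK}(M_\alpha)=d$. As $\beta\triangleleft U$ is a right $\KK$-subspace of the $d$-dimensional right $\KK$-vector space $\alpha\triangleleft U$, I conclude $\dim_{\KK}(\beta\triangleleft U)\leq d$; hence the order $d_\beta$ of $\beta$, which equals $\dim_{\KK}(M_\beta)=\dim_{\KK}(\beta\triangleleft U)$, satisfies $d_\beta\leq d$.

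Finally I would run the recursion attached to $\beta$. Pick an operator $\cL_\beta=\sum_{i=0}^{d_\beta}t(b_i)\cN^i$ with $b_{d_\beta}\neq 0$ annihilating $\beta$ (it exists by definition of $d_\beta$, and one may take it monic as in Remark \ref{rem:Varios}). Expanding $0=\cL_\beta(\beta)(n)$ as in \eqref{eq:DRS}, the unique top-index contribution is $b_{d_\beta}\beta(n+d_\beta)$, while every other summand involves only values $\beta(m)$ with $m\leq n+d_\beta-1$; invertibility of $b_{d_\beta}$ in the field $\KK$ then expresses $\beta(n+d_\beta)$ as a $\KK$-linear combination of $\beta(0),\dots,\beta(n+d_\beta-1)$. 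Because $d_\beta\leq d$, the hypothesis already supplies $\beta(0)=\cdots=\beta(d_\beta-1)=0$, so an immediate induction on $n$ forces $\beta(n)=0$ for every $n\geq 0$, i.e. $\cL(\alpha)\equiv 0$. No genuinely hard computation intervenes; the only point requiring care is the submodule inclusion above, which is what turns an arbitrary annihilating operator $\cL$ (of possibly large degree) into a sequence of controlled order $d$.
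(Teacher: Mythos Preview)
Your proof is correct, and it takes a somewhat different, more module-theoretic route than the paper. The paper argues directly inside $M_\alpha=U/P_\alpha(Y)U$: writing $P_\alpha(Y)=Y^d-Q_\alpha(Y)$ with $\deg Q_\alpha<d$ and using $\cL'(\alpha)(n)=\tilde f_\alpha\big(P'(Y)Y^n+P_\alpha(Y)U\big)$, it reduces $P'(Y)Y^{d+k}$ modulo $P_\alpha(Y)U$ to a right $\KK$-combination of the $P'(Y)Y^{j}$ with $j<d+k$, and concludes by induction. Your argument instead abstracts this reduction into the single structural observation that $\beta\triangleleft U\subseteq\alpha\triangleleft U\cong M_\alpha$ forces the order of $\beta=\cL(\alpha)$ to be at most $d$, after which the recursion for $\beta$ itself (not for $\alpha$) shows that $d$ vanishing initial values kill $\beta$. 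The two approaches are close in spirit---both ultimately exploit $\dim_\KK M_\alpha=d$---but yours isolates the conceptual point (bounding the order of $\beta$) and avoids the explicit bookkeeping with $Q_\alpha$ and the commutation relations \eqref{eq:useful}, at the small cost of invoking Remark \ref{rem:Ulin} to know $\beta\in\dlin{\KK}$; the paper's version is more self-contained and computational.
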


\begin{proof}
If $\alpha\in\dlin{\KK}$ is a differentially recursive sequence of order $d$, then there exists an operator $\cL_\alpha\coloneqq \sum_{i=0}^d t(a_i)\cN^i$ such that $0 = \cL_\alpha(\alpha) = \alpha\triangleleft P_\alpha(Y)$, where $P_\alpha(Y) = \sum_{i=0}^d Y^ia_i$. Assume that $\cL' \coloneqq \sum_{i=0}^e t(c_i)\cN^i$ is another operator such that
\begin{equation}\label{eqApp:vanish'}
0 = \cL'(\alpha)(n) = \left(\alpha\triangleleft P'(Y)\right)(n) \qquad \text{for all} \quad n=0,\ldots,d-1.
\end{equation}
Consider $f_\alpha = \Phi^{-1}(\alpha) \in U^*$ and $\tilde{f}_\alpha \in (U/P_\alpha(Y)U)^*$. Equation \eqref{eqApp:vanish'}, together with \eqref{eq:PhiLin} and \eqref{eq:alphaf}, entails that
\begin{equation}\label{eqApp:zero}
\tilde{f}_\alpha\left(P'(Y)Y^n + P_\alpha(Y)U\right) = f_\alpha\left(P'(Y)Y^n\right) \stackrel{\eqref{eq:PhiLin}}{=} \Phi^{-1}\left(\alpha\triangleleft P'(Y)\right)(Y^n) \stackrel{\eqref{eq:alphaf}}{=} \cL'(\alpha)(n) \stackrel{\eqref{eqApp:vanish'}}{=} 0  
\end{equation}
for all $n=0,\ldots,d-1$. Let us prove by induction on $k\geq 0$ that $\cL'(\alpha)(d+k)=0$ as well. For the sake of simplicity, write $P_\alpha(Y) = Y^d - Q_\alpha(Y)$, with $\deg(Q_\alpha(Y))<d$. For $k=0$,
\begin{gather*}
\cL'(\alpha)(d) \stackrel{\eqref{eq:alphaf}}{=} f_{\cL'(\alpha)}\left(Y^d\right) \stackrel{\eqref{eq:PhiLin}}{=} f_\alpha\left(P'(Y)Y^d\right) = \tilde{f}_\alpha\left(P'(Y)Y^d + P_\alpha(Y)U\right) \\
 = \tilde{f}_\alpha\left(P'(Y)Q_\alpha(Y) + P_\alpha(Y)U\right) = \sum_{i=0}^{d-1} \tilde{f}_\alpha\left(P'(Y)Y^i + P_\alpha(Y)U\right)q_i \stackrel{\eqref{eqApp:zero}}{=} 0.
\end{gather*}
Now, assume that $\cL'(\alpha)(d+k)=0$ holds for all $k=0,\ldots,h-1$. Then
\begin{gather*}
\cL'(\alpha)(d+h) = \tilde{f}_\alpha\left(P'(Y)Y^{d+h} + P_\alpha(Y)U\right) = \tilde{f}_\alpha\left(P'(Y)Q_\alpha(Y)Y^{h} + P_\alpha(Y)U\right) \\
= \sum_{i=0}^{d-1} \tilde{f}_\alpha\left(P'(Y)Y^iq_iY^{h} + P_\alpha(Y)U\right)  \stackrel{\eqref{eq:useful}}{=} \sum_{i=0}^{d-1}\sum_{j=0}^{h}\binom{h}{j} \tilde{f}_\alpha\left(P'(Y)Y^{i+j} + P_\alpha(Y)U\right)\partial^{h-j}\left(q_i\right) \\
 = \sum_{i=0}^{d-1}\sum_{j=0}^{h}\binom{h}{j} \cL'(\alpha)(i+j)\partial^{h-j}\left(q_i\right) = 0
\end{gather*}
and so, by induction, $\cL'(\alpha)(n) = 0$ for every $n\in\N$.
\end{proof}

\begin{proposition}\label{propApp:supersysts}
Let $\alpha\in\cH(\KK)$ be a sequence and $d\geq 1$. The (non-linear) system of $d$ equations
\begin{equation}\label{eqApp:supersyst}
 0 = \cL\left(\alpha\right)(n) = \sum_{i=0}^{d}\sum_{k=0}^{n}\binom{n}{k}\partial^k\left(x_i\right)\alpha(n-k+i), \qquad 0\leq n\leq d-1,
\end{equation}
in the $d+1$ unknowns $x_0,\ldots,x_{d}$ is equivalent to the homogeneous (linear) system
\begin{equation}\label{eqApp:supersyst2}
\sum_{i=0}^{d}x_i\left(\sum_{k=0}^n\binom{n}{k}(-1)^k\partial^{k}\left(\alpha(n-k+i)\right)\right) = 0, \qquad 0\leq n\leq d-1.
\end{equation}
\end{proposition}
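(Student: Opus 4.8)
The plan is to recognise both systems as truncations of two honest elements of the Hurwitz algebra $\cH(\KK)$ that are related by an invertible, ``lower triangular'' operator. Setting the unknown coefficients $c_i\coloneqq x_i$ and recalling the expansion of the Hurwitz product against $t$, I would first observe that the left-hand side of \eqref{eqApp:supersyst} is the $n$-th component of $\mathcal{A}\coloneqq \sum_{i=0}^{d} t(x_i)\cdot \cN^i(\alpha)\in\cH(\KK)$, since $\big(t(x_i)\cdot \cN^i(\alpha)\big)(n)=\sum_{k}\binom nk \partial^k(x_i)\,\alpha(n-k+i)$. Next, introducing the additive operator
\[
T:\cH(\KK)\to\cH(\KK),\qquad T(\beta)(n)\coloneqq \sum_{k=0}^n\binom nk(-1)^k\partial^k\big(\beta(n-k)\big),
\]
I would rewrite the left-hand side of \eqref{eqApp:supersyst2} as the $n$-th component of $\mathcal{B}\coloneqq \sum_{i=0}^{d} s(x_i)\cdot T\big(\cN^i(\alpha)\big)$, because $\big(s(x_i)\cdot\gamma\big)(n)=x_i\gamma(n)$ gives $\big(s(x_i)\cdot T(\cN^i\alpha)\big)(n)=x_i\sum_k\binom nk(-1)^k\partial^k(\alpha(n-k+i))$. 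Thus \eqref{eqApp:supersyst} reads $\mathcal A(n)=0$ and \eqref{eqApp:supersyst2} reads $\mathcal B(n)=0$, for $0\le n\le d-1$.

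The heart of the argument is the intertwining identity
\[
T\big(t(x)\cdot\beta\big)=s(x)\cdot T(\beta)\qquad\text{for all } x\in\KK,\ \beta\in\cH(\KK),
\]
which, summed over $i=0,\dots,d$ and combined with the additivity of $T$, yields $T(\mathcal A)=\mathcal B$. This is where the main work lies: expanding $T\big(t(x)\cdot\beta\big)(n)$ and applying the Leibniz rule \eqref{eq:derprod} produces a triple sum in which every term carrying a derivative $\partial^a(x)$ of positive order $a$ must cancel. Collecting those terms according to the order $a$ of the derivative landing on $x$ and the order $q$ of the derivative landing on $\beta$, repeated use of the subset-of-a-subset identity $\binom Nm\binom m\ell=\binom N\ell\binom{N-\ell}{m-\ell}$ reduces their common coefficient to a scalar multiple of $\sum_{i=0}^{a}(-1)^i\binom ai=(1-1)^a=0^a$, which vanishes precisely for $a\ge 1$, while the surviving $a=0$ term is exactly $x\,T(\beta)(n)=\big(s(x)\cdot T(\beta)\big)(n)$. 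I expect this binomial bookkeeping to be the only genuinely delicate step.

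Finally, I would show that $T$ is invertible with inverse
\[
S:\cH(\KK)\to\cH(\KK),\qquad S(\beta)(n)\coloneqq \sum_{m=0}^n\binom nm\partial^{n-m}\big(\beta(m)\big),
\]
by the same style of computation: both $T\circ S$ and $S\circ T$ collapse, via the subset-of-a-subset identity, to a factor $(1-1)^{n-m}=0^{n-m}$, leaving only the diagonal term, so that $T\circ S=S\circ T=\id$. The point to emphasise is that $T$ and $S$ are \emph{lower triangular}: each of $T(\beta)(n)$ and $S(\beta)(n)$ depends only on $\beta(0),\dots,\beta(n)$. Hence, from $\mathcal B=T(\mathcal A)$ the vanishing $\mathcal A(0)=\cdots=\mathcal A(n)=0$ forces $\mathcal B(n)=0$, while from $\mathcal A=S(\mathcal B)$ the vanishing $\mathcal B(0)=\cdots=\mathcal B(n)=0$ forces $\mathcal A(n)=0$. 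Running $n$ from $0$ to $d-1$ then shows that the truncated systems $\{\mathcal A(n)=0\}$ and $\{\mathcal B(n)=0\}$ have exactly the same solutions $(x_0,\dots,x_d)\in\KK^{d+1}$, which is the asserted equivalence of \eqref{eqApp:supersyst} and \eqref{eqApp:supersyst2}.
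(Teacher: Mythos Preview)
Your argument is correct, and it is a genuinely different packaging from the paper's proof. The paper proceeds by a direct brute-force computation: setting $E_n$ and $F_n$ for the two expressions, it subtracts $\sum_{s=0}^{n-1}\binom{n}{s}\partial^{n-s}(F_s)$ from $E_n$ and, after several pages of explicit multinomial reindexing and repeated use of the subset-of-a-subset identity, shows the result equals $F_n$. In other words, the paper establishes the single relation $E_n=\sum_{s=0}^{n}\binom{n}{s}\partial^{n-s}(F_s)$ componentwise and then deduces the equivalence by induction on $n$.

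Your approach isolates exactly the same relation, but conceptually: you recognise that $(E_n)_n=\mathcal A$ and $(F_n)_n=\mathcal B$ are honest Hurwitz sequences and that the paper's identity is precisely $\mathcal A=S(\mathcal B)$ for the lower-triangular operator $S$, whose inverse $T$ satisfies the intertwining law $T\big(t(x)\cdot\beta\big)=s(x)\cdot T(\beta)$. This buys you modularity (two short binomial computations instead of one long one) and it makes transparent why the equivalence respects the truncation at level $d-1$ (lower triangularity of $T$ and $S$). It is also worth noting that your $S$ and $T$ are close relatives of the antipode $\sS$ in Theorem~\ref{thm:chalgd} and of the componentwise derivation $\nabla$ of Remark~\ref{rem:nabla}, so your proof hints at a Hopf-algebroid interpretation of the linearisation. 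The paper's approach, by contrast, is entirely self-contained and avoids introducing auxiliary operators. Both rely, at bottom, on the same vanishing $\sum_{\ell}(-1)^\ell\binom{a}{\ell}=0$ for $a\geq 1$.
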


\begin{proof}
For every $n\in\N$, denote by $E_n$ the expression
\[
E_n \coloneqq \sum_{i=0}^{d}\sum_{k=0}^{n}\binom{n}{k}\partial^k\left(x_i\right)\alpha(n-k+i)
\]
and by $F_n$ the expression
\[
F_n \coloneqq \sum_{i=0}^{d}x_i\left(\sum_{k=0}^n\binom{n}{k}(-1)^k\partial^{k}\left(\alpha(n-k+i)\right)\right).
\]
Let us prove by induction on $n\geq 0$ that if $F_i = 0$ for all $i=0,\ldots,n$, then $E_{n+1} = F_{n+1}$. For $n = 0$, 
\[
\sum_{i=0}^dx_i\alpha(i) = E_0=F_0=0
\]
and 
\begin{gather*}
E_1 = \sum_{i=0}^dx_i\alpha(i+1)+\sum_{i=0}^d\partial\left(x_i\right)\alpha(i) = \sum_{i=0}^dx_i\alpha(i+1) - \sum_{i=0}^dx_i\partial\left(\alpha(i)\right) + \partial(E_0) = \sum_{i=0}^dx_i\left(\alpha(i+1) - \partial\left(\alpha(i)\right)\right) = F_1.
\end{gather*}
Assume that the claim holds for $k=0,\ldots,n-1$ and let us prove it for $k=n$. Since $F_s = 0$ for $s<n$, $\partial^{n-s}(F_s) = 0$, that is to say,
\begin{equation}\label{eqApp:tech1}
0 = \partial^{n-s}(F_s) = \sum_{i=0}^{d}\sum_{j=0}^{n-s}\sum_{k=0}^s\binom{s}{k}\binom{n-s}{j}(-1)^k\partial^j\left(x_i\right)\partial^{n-s-j+k}\left(\alpha(s-k+i)\right).
\end{equation}
Therefore, a very technical but otherwise straightforward computation shows that
\begin{gather*}
E_n \stackrel{\eqref{eqApp:tech1}}{=} E_n - \sum_{s=0}^{n-1}\binom{n}{s}\partial^{n-s}(F_s) \\
= \sum_{i=0}^{d}\sum_{k=0}^{n}\binom{n}{k}\partial^k\left(x_i\right)\alpha(n-k+i) - \sum_{s=0}^{n-1}\sum_{i=0}^{d}\sum_{j=0}^{n-s}\sum_{k=0}^s\binom{n}{s}\binom{s}{k}\binom{n-s}{j}(-1)^k\partial^j\left(x_i\right)\partial^{n-s-j+k}\left(\alpha(s-k+i)\right) \\
= \left[
\begin{gathered}
\sum_{i=0}^{d}\sum_{k=0}^{n}\binom{n}{k}\partial^k\left(x_i\right)\alpha(n-k+i) - \sum_{s=0}^{n-1}\sum_{i=0}^{d}\sum_{k=0}^s\binom{n}{s}\binom{s}{k}(-1)^kx_i\partial^{n-s+k}\left(\alpha(s-k+i)\right) + \\
 - \sum_{s=0}^{n-1}\sum_{i=0}^{d}\sum_{j=1}^{n-s}\sum_{k=0}^s\binom{n}{s}\binom{s}{k}\binom{n-s}{j}(-1)^k\partial^j\left(x_i\right)\partial^{n-s-j+k}\left(\alpha(s-k+i)\right) 
\end{gathered}
\right] \\
= \left[
\begin{gathered}
\sum_{i=0}^{d}\sum_{k=0}^{n}\binom{n}{k}\partial^k\left(x_i\right)\alpha(n-k+i) - \sum_{s=0}^{n-1}\sum_{i=0}^{d}\sum_{k=0}^s\binom{n}{s}\binom{s}{k}(-1)^kx_i\partial^{n-s+k}\left(\alpha(s-k+i)\right) + \\
 - \sum_{j=1}^{n}\sum_{s=0}^{n-j}\sum_{i=0}^{d}\sum_{k=0}^s\binom{n}{s}\binom{s}{k}\binom{n-s}{j}(-1)^k\partial^j\left(x_i\right)\partial^{n-s-j+k}\left(\alpha(s-k+i)\right) 
\end{gathered}
\right] \\
= \left[
\begin{gathered}
\sum_{i=0}^{d}x_i \left( \alpha(n+i) - \sum_{s=0}^{n-1}\sum_{k=0}^s\binom{n}{s}\binom{s}{k}(-1)^k\partial^{n-s+k}\left(\alpha(s-k+i)\right) \right) + \\
+ \sum_{i=0}^{d}\sum_{k=1}^{n}\partial^k\left(x_i\right)\left(\binom{n}{k}\alpha(n-k+i) - \sum_{s=0}^{n-k}\sum_{h=0}^s\binom{n}{s}\binom{s}{h}\binom{n-s}{k}(-1)^h\partial^{n-s-k+h}\left(\alpha(s-h+i)\right)\right) 
\end{gathered}
\right] \\
= \left[
\begin{gathered}
\sum_{i=0}^{d}x_i \left( \alpha(n+i) - \sum_{s=0}^{n-1}\sum_{k=0}^s\binom{n}{s}\binom{s}{k}(-1)^k\partial^{n-s+k}\left(\alpha(s-k+i)\right) \right) + \\
+ \sum_{i=0}^{d}\sum_{k=1}^{n}\binom{n}{k}\partial^k\left(x_i\right)\left(\alpha(n-k+i) - \sum_{s=0}^{n-k}\sum_{h=0}^s\binom{n-k}{s}\binom{s}{h}(-1)^h\partial^{n-s-k+h}\left(\alpha(s-h+i)\right)\right) 
\end{gathered}
\right].
\end{gather*}
Let us focus first on
\begin{gather*}
\alpha(n-k+i) - \sum_{s=0}^{n-k}\sum_{h=0}^s\binom{n-k}{s}\binom{s}{h}(-1)^h\partial^{n-s-k+h}\left(\alpha(s-h+i)\right) = \\
 = \alpha(q+i) - \sum_{s=0}^{q}\sum_{h=0}^s\binom{q}{s}\binom{s}{h}(-1)^h\partial^{q-s+h}\left(\alpha(s-h+i)\right) = \alpha(q+i) - \sum_{s=0}^{q}\sum_{k=0}^s\binom{q}{s}\binom{s}{s-k}(-1)^{s-k}\partial^{q-k}\left(\alpha(k+i)\right) \\
 = \alpha(q+i) - \sum_{k=0}^q\sum_{s=k}^{q}\binom{q}{s}\binom{s}{s-k}(-1)^{s-k}\partial^{q-k}\left(\alpha(k+i)\right) = \alpha(q+i) - \sum_{k=0}^q\left(\sum_{t=0}^{q-k}\binom{q}{t+k}\binom{t+k}{t}(-1)^{t}\right)\partial^{q-k}\left(\alpha(k+i)\right) \\
 = \alpha(q+i) - \sum_{k=0}^q\binom{q}{k}\left(\sum_{t=0}^{q-k}\binom{q-k}{t}(-1)^{t}\right)\partial^{q-k}\left(\alpha(k+i)\right) = \alpha(q+i) - \alpha(q+i) = 0
\end{gather*}
and then on
\begin{gather*}
\alpha(n+i) - \sum_{s=0}^{n-1}\sum_{k=0}^s\binom{n}{s}\binom{s}{k}(-1)^k\partial^{n-s+k}\left(\alpha(s-k+i)\right) = \alpha(n+i) - \sum_{s=0}^{n-1}\sum_{t=0}^s\binom{n}{s}\binom{s}{s-t}(-1)^{s-t}\partial^{n-t}\left(\alpha(t+i)\right) \\
 = \alpha(n+i) - \sum_{t=0}^{n-1}\sum_{s=t}^{n-1}\binom{n}{s}\binom{s}{s-t}(-1)^{s-t}\partial^{n-t}\left(\alpha(t+i)\right) = \alpha(n+i) - \sum_{t=0}^{n-1}\left(\sum_{h=0}^{n-1-t}\binom{n}{h+t}\binom{h+t}{h}(-1)^{h}\right)\partial^{n-t}\left(\alpha(t+i)\right) \\
 = \alpha(n+i) - \sum_{t=0}^{n-1}\binom{n}{t}\left(\sum_{h=0}^{n-1-t}\binom{n-t}{h}(-1)^{h}\right)\partial^{n-t}\left(\alpha(t+i)\right) \\
 = \alpha(n+i) - \sum_{t=0}^{n-1}\binom{n}{t}\left(\sum_{h=0}^{n-t}\binom{n-t}{h}(-1)^{h} - (-1)^{n-t}\right)\partial^{n-t}\left(\alpha(t+i)\right) \\
 = \alpha(n+i) + \sum_{t=0}^{n-1}\binom{n}{t}(-1)^{n-t}\partial^{n-t}\left(\alpha(t+i)\right) = \sum_{t=0}^{n}\binom{n}{t}(-1)^{n-t}\partial^{n-t}\left(\alpha(t+i)\right).
\end{gather*}
Thus,
\[
E_n = \sum_{i=0}^{d}x_i \left( \sum_{t=0}^{n}\binom{n}{t}(-1)^{n-t}\partial^{n-t}\left(\alpha(t+i)\right) \right) = F_n.
\]
This implies the following. If $\{x_i\}$ is a solution of $E_n = 0$ for $n=0,\ldots,d-1$ then $F_0 = E_0 =0$ and, by the inductive argument above, $F_n = E_n = 0$ for $n = 0,\ldots,d-1$. Conversely, If $\{x_i\}$ is a solution of $F_n = 0$ for $n=0,\ldots,d-1$ then, by the same argument, $E_n = F_n = 0$ for $n = 0,\ldots,d-1$.
\end{proof}


\end{document}